\documentclass[a4paper,10pt,reqno]{amsart}
\usepackage{amsthm,amssymb,latexsym,amsmath}

\usepackage[latin1]{inputenc}
\usepackage{amsmath,amssymb,amsfonts,amsthm}
\usepackage{nth}

\newtheorem{theorem}{Theorem}[section]

\renewcommand{\colon}{\nobreak\mskip2mu\mathpunct{}\nonscript
  \mkern-\thinmuskip{:}\mskip6muplus1mu\relax}

\newtheorem{proposition}[theorem]{Proposition}
\newtheorem{corollary}[theorem]{Corollary}
\newtheorem{lemma}[theorem]{Lemma}
\theoremstyle{definition}
\newtheorem{definition}[theorem]{Definition}
\newtheorem{example}[theorem]{Example}

\newtheorem{remark}[theorem]{Remark}
\numberwithin{equation}{section}
\usepackage{color}

\def\ceq{\mathrel{\vcenter{\hbox{:}}{=}}}

\newcommand{\opn}{{\mathcal O}_{\mathbb{P}^2}}

\newcommand{\F}{{\mathcal{F}}}
\newcommand{\G}{{\mathcal{G}}}

\newcommand{\Q}{\mathbb{Q}}

\newcommand{\C}{\mathbb{C}}

\newcommand{\OO}{\mathcal{O}}

\newcommand{\PP}{\mathbb P}

\newcommand{\Z}{\mathbb Z}
\newcommand{\aut}{\mbox{Aut}}
\newcommand{\bim}{\mbox{Bim}}
\newcommand{\tg}{\mbox{tang}}
\newcommand{\HH}{\mathrm{H}}
\newcommand{\gl}{\mbox{GL}}
\newcommand{\pgl}{\mbox{PGL}}
\newcommand{\sing}{\mbox{Sing}}
\newcommand{\kod}{\mbox{Kod}}
\newcommand{\lcm}{\mbox{lcm}}
\newcommand{\di}{\mbox{d}}
\newcommand{\dx}{\mbox{dx}}

\newcommand{\dz}{\mbox{dz}}
\newcommand{\dw}{\mbox{dw}}
\newcommand{\dX}{\mbox{dX}}
\newcommand{\dY}{\mbox{dY}}
\newcommand{\dZ}{\mbox{dZ}}

\newcommand{\pder}[2]{\frac{\partial #1}{\partial #2}}

\title[Polynomial bounds for automorphisms groups of foliations]{Polynomial bounds for automorphisms groups of foliations}
\date{\today}

\author[Corr\^ea, Muniz]
{M. Corr\^ea \and A. Muniz }

\address{\emph{M. Corr\^ea }: Depto. de Mat.--ICEX
 Universidade Federal de Minas Gerais, UFMG}
\curraddr{Av. Ant\^onio Carlos 6627, 31270-901,
 Belo Horizonte-MG, Brasil.}
\email{mauriciomatufmg@gmail.com}

\address{\emph{A. Muniz}: Depto. de Mat.--ICEX
 \\ Universidade Federal de Minas Gerais, UFMG}
\curraddr{Av. Ant\^onio Carlos 6627, 31270-901,
 Belo Horizonte-MG, Brasil.}
\email{alannmuniz@gmail.com}

\subjclass[2010]{Primary 14E05- 34M45} \keywords{Automorphism -
Holomorphic foliations}

\begin{document}

\begin{abstract}
Let $(X, \F)$ be a foliated surface and $G$ a finite group of
automorphisms of $X$ that preserves $\F$. We investigate invariant
loci for $G$ and obtain upper bounds for its order that depends 
polynomially on the Chern numbers of $X$ and $\F$. As a consequence,
we estimate the order of the automorphism group of some foliations
under mild restrictions. We obtain an optimal bound for foliations on the projective plane which  is attained by the automorphism groups of the Jouanolou's foliations.
\end{abstract}

\maketitle

\section{Introduction}

The problem of bounding automorphisms traces back to the late \nth{19} century in the works of Felix Klein, Adolf Hurwitz and others on the study of Riemann surfaces. In 1882, Klein proved that a compact Riemann surface $C$ of genus $g\geq 2$ has finite automorphism group and Hurwitz, in 1892, gave a upper bound to the order of such groups. Lower bounds were found under stronger assumptions. In 1895, Wiman \cite{Wi} studied cyclic subgroups $G$ of $\aut(C)$ and proved the following:
\begin{equation}\label{wim}
|G| \leq 4g+2.
\end{equation}

The higher dimensional counterpart to the genus is given by the plurigenera of a variety, more precisely their asymptotic behavior: the Kodaira dimension. This plays the major role in the Enriques-Kodaira classification of compact complex surfaces, in the mid-\nth{20} century, and the counterpart to Klein's theorem has been given by Aldo Andreotti in 1950: a general type surface (a surface of maximum Kodaira dimension) has finite self-bimeromorphism group. Several authors
provided polynomial bounds for automorphism groups depending on the
Chern numbers of the manifold, see for
example \cite{HS,Cor,X1}. Xiao \cite{X1} obtained a linear bound for the automorphism group of a minimal surface $X$ of general type:
\begin{equation}\label{xiao}
|\aut(X)| \leq (42K_X)^2.
\end{equation}

In the late \nth{20} century, the Kodaira dimension has been extended to foliated surfaces, i.e., a surface equipped with a foliation. Foliated surfaces have been classified, independently, by Michael McQuillan and Luis Gustavo Mendes. In 2002, Jorge Vit\'orio Pereira and Percy Fern\'andez S\'anchez \cite{PS} proved a foliated version of Andreotti's theorem: foliated surfaces of general type have finite self-bimeromorphism groups. In 2014, Maur\'icio Corr\^ea and Thiago Fassarella \cite{CF} obtained an exponential bound for foliations with ample canonical bundles and finite automorphism groups, not necessarily of general type. In \cite{BS} Sc\'ardua studied polynomial and transcendental automorphism of foliations on $\mathbb{C}^2$. 

The aim of this work is to give polynomial bounds for the order of the automorphism group $\aut(\F)$ of a foliation $\F$ in several distinct cases. We use techniques that explore the $\aut(\F)$--invariant loci to bound its order. The main idea is, after reducing to an abelian subgroup, to find appropriate invariant divisors on the surface where numerical data of the group and the foliation can be measured. 

This paper is organized as follows. In Section 3, we analyze some
local properties of automorphisms of germs of foliations. In Section
4, the automorphisms of foliations on the projective plane $\PP^2$
are studied. It is proved that the order of the automorphism group of a  foliation, degree $d$,  can be bounded quadratically with respect to $d$.
This bound is sharp, it is attained by the automorphism groups of the Jouanolou's foliations. Section 5 concerns the case when $X$ is geometrically ruled. Results toward the classification of foliations on such surfaces are proved and, under mild hypotheses, quadratic bounds are given. Finally, in the last section we study foliations with ample canonical bundle on surfaces whose Kodaira dimension is nonnegative. Cubic bounds are given for their automorphism groups under mild restrictions.

\section{Holomorphic Foliations on Surfaces}
Let $X$ be a smooth compact complex surface. A
foliation $\F$ on $X$ is given by an open covering
$\mathcal{U}=\{U_{i}\}$ of $X$ and
 $1$-forms $\omega_{i} \in
\Omega^{1}_{X}(U_{i})$ subject to the conditions:
\begin{enumerate}
 \item For each non-empty intersection $U_{i} \cap U_{j}$ there exists a holomorphic function $g_{ij} \in \OO^{*}_X(U_{i} \cap U_{j})$ such that
 $\omega_{i}= g_{ij} \omega_{j}$;
 \item For every $i$ the zero set of $\omega_{i}$ is isolated.
\end{enumerate}
The $1$-forms $\{\omega_i\}$ patch together to form a global section
$$
\omega=\{\omega_i\}\in{\rm H}^0(X,\Omega^{1}_{X}\otimes N_{\F}),
$$
where $ N_{\F}$ is the line bundle over $X$ determined by the
cocycle $\{g_{ij}\}$, which is called the normal bundle of $\F$. The singular set of $\F$, denoted by
${\rm{Sing}}(\F)$, is the zero set of the twisted $1$-form $\omega$.

Let $T_X$ be the tangent sheaf of $X$. The tangent sheaf of
 $\F$, induced by a twisted $1$-form $\omega\in
\HH^0(X,\Omega^{1}_{X}\otimes  N_{\F})$, is defined on each
open set $U\subset X$ by
$$
T_{\F}(U)=\{v\in T_X(U)\mid i_{v}\omega=0\}.
$$
The foliation $\F$ can also be given by vector fields $v_{i}\in T_X(U_{i})$ with codimension two zero set and satisfying $v_{i}=f_{ij}v_{j}$, where $f_{ij}\in \OO^{*}_X(U_{i} \cap U_{j})$. The line bundle determined by the cocycle
$\{f_{ij}\}$ is called the canonical bundle of $\F$ and it is
denoted by $K_{\F}$. 

Let $\eta\in
\HH^0(X,\Omega^{1}_{X}\otimes  N_{\F})$ be another section defining $\F$. Up to refining to a common open covering, we have that
\[
\eta_i = h_i \omega_i, \, h_i \in \OO_X^{\ast}(U_i).
\]
It follows that the functions  $h_i$ patch together to form $h \in \HH^0(X,\OO_X^{\ast})= \C^{\ast}$. Then the $h_i$ are constant. The same holds for vector fields.

We need to recall some index formulae that will be useful in  this work. For details and proofs see \cite{Bru}. Fix a smooth compact complex surface $X$ and a foliation $\F$ on $X$. Let $p\in \sing(\F)$ and let $v = A(z,w)\frac{\partial}{\partial z} + B(z,w)\frac{\partial}{\partial w}$ be a holomorphic vector field that generates $\F$ in coordinates $(z,w)$ around $p$. Then we define the residues:
\begin{align*}
{\rm BB}(\F, p) & = {\rm Res}_{(0,0)}\left\{\frac{({\rm tr}(Dv(z,w)))^2}{A(z,w)B(z,w)}\dz\wedge \dw\right\}, \\
\mu(\F, p) & = {\rm Res}_{(0,0)}\left\{\frac{({\rm det}(Dv(z,w)))^2}{A(z,w)B(z,w)}\dz\wedge \dw\right\} , 
\end{align*}
where ${\rm Res}_{(0,0)}$ denotes the Grothendieck residue at the origin. The Baum--Bott Theorem tells  us that:
\begin{align}
\sum\limits_{p\in {\rm Sing}(\F)}{\rm BB}(\F, p) 
& =  N_{\F}^2 = c_1(X)^2 -2K_{\F}\cdot K_X +K_{\F}^2,\\
\sum\limits_{p\in {\rm Sing}(\F)}\mu(\F, p)
& = c_2(T_X\otimes K_{\F}) = c_2(X) -K_{\F}\cdot K_X +K_{\F}^2 \label{BB}.
\end{align}

Let $C \subset X$ be a curve such that none of its irreducible components is invariant by $\F$. Let $\{f=0\}$ be a reduced equation for $C$ in some small neighborhood of a point $p\in C$.  Let $v$ be  a local vector field defining $\F$ around $p$, with isolated zeros.  The tangency index between $\F$ and $C$ is defined at $p$ by 
\[
\tg(\F,C,p) = \dim_{\C} \frac{\OO_{X,p}}{\left<f, v(f)\right>} .
\]
Since  $C$ is not $\F$--invariant then $v(f)$ is not identically zero along $C$. The number tangencies between $C$ and the leaves of $\F$ is finite and their sum, denoted by ${\rm tang}(\F, C)$,   can be calculated by the following:
\begin{equation}\label{tang}
\tg(\F,C) = C^2 + K_{\F}\cdot C.
\end{equation}

Let $C \subset X$ be a curve whose irreducible components are $\F$--invariant. Let $\{f=0\}$ be a reduced equation for $C$ in some small neighborhood of a point $p\in C$. If $\omega$ is a   holomorphic  $1$-form defining $\F$ around $p$, we can decompose $\omega$ (see \cite[Chapter V]{Suwa}) as follows 
\[
g\omega = h\mbox{d}f+f\eta ,
\]
where $\eta$ is a holomorphic $1$-form, $g$ and $h$ are holomorphic functions with $h$ and $f$ relatively  prime. The function $\frac{h}{g}|_C$ is meromorphic and does not depend on the choice of $g$, $h$ and $\eta$. We define
\[
{\rm Z}(\F, C, p) = ``{\rm vanishing} \, {\rm order} \, {\rm of }\, \left.\frac{h}{g}\right|_C \, \  {\rm at } \, \  p".
\]
If $p$ is a smooth point of $C$, this number  is the  Gomez-Mont--Seade--Verjovsky index regarded as the Poincaré--Hopf index of the restriction to $C$ of a local vector field defining $\F$ at $p$. This index is shown to be zero if $p$ is a regular point for $\F$, hence the sum is finite and it is calculated by the following:
\begin{equation}\label{gsv}
{\rm Z}(\F,C) = \sum\limits_{p\in {\rm Sing}(\F) \cap C}{\rm Z}(\F,C,p) = \chi(C) + K_{\F}\cdot C,
\end{equation}
where $\chi(C) = -C^2 - K_X\cdot C$ is the virtual (arithmetic) Euler characteristic of $C$.

\subsection{Kodaira Dimension of Foliations}
The notion of Kodaira dimension for holomorphic foliations has been
introduced independently by L. G. Mendes and M. McQuillan. For more
information on the subject see \cite{Bru}. For the convenience of the
reader we will recall it in the next few lines.

A singularity $x$ of $\mathcal F$ is called reduced
if the eigenvalues of the linear part $Dv_x$, of a germ of vector
field $v$ defining $\F$ at $x$, are not both zero and their
quotient, when defined, is not a positive rational number. A
foliation is called reduced if all the singularities are reduced. A
theorem due to A. Seidenberg (see \cite{Sei}) says that there exists a
sequence of blowing--ups $\pi\colon \widetilde{X} \longrightarrow X$ over the
singularities of $\F$ such that the induced foliation $\pi^{*}(\F)$
in $\widetilde{X}$ has only reduced singularities. Any reduced foliation
birationally equivalent to $\F$ is called a reduced model of
$\F$.

Let $\F$ be a foliation on the complex surface $X$ and let $\G$ be a reduced model of $\F$. The Kodaira dimension of $\F$, denoted  $\kod(\F)$, is defined as the Iitaka-Kodaira dimension of $K_\G$. Besides the birational classification of foliations  of Kodaira dimension at most one, the most important facts about general type foliations  that will be used are:
\begin{enumerate}
\item A general type foliation  $\F$ has finite bimeromorphism group, $\bim(\F)$;
\item There exists a minimal model $(Y,\G)$ à la Zariski, that is,
\[
\bim(\F) = \aut(\G).
\]
\end{enumerate} 
Hence we will focus on the automorphism groups.

\subsection{Automorphisms of Foliations}

We say that an automorphism $\varphi \colon X\longrightarrow X$ preserves a foliation $\F$ defined by a twisted $1$-form $\omega\in{\rm H}^0(X,\Omega^{1}_{X}\otimes N_{\F})$ if $\varphi^{*}\omega$ also defines the foliation $\F$. This means that $ \varphi^{*}\omega = f\omega $ for some constant $f$.  Through this work we want to analyze the order of the group ${\rm Aut}(\F)$ which is the maximum of the subgroups of $\mathrm{Aut}(X)$ that preserve $\F$.

Any automorphism of a foliation leaves invariant its singular set.
Then it is useful to know how the action on a finite set can be. We
first recall the well-known orbit-stabilizer formula.

Let $G$ be a finite group acting on a finite set $M = \{x_1, \dots, 
x_r\}$ then we can define an equivalence relation in $M$ by $x_i
\sim x_j$ if they lie in the same $G$-orbit, that is, $x_i = g(x_j)$
for some $g\in G$. Hence, $M$ splits into disjoint orbits. Suppose
that there are $s$ disjoint orbits generated by $s$ elements $x_1, \dots, x_s$,
then
\[
r = \#M = \sum_{i=1}^s \#{\rm Orb}(x_i) =\sum_{i=1}^s \left(G:H_i\right) = \sum_{i=1}^s \frac{|G|}{|H_i|},
\]
where $H_i$ is the stabilizer of $x_i$ in $G$. A group of automorphisms which preserves a foliation $\F$ acts on its singular set $\sing(\F)$ and  preserves the
analytical invariants of the singularities. Concerning the Milnor numbers of the singularities, we have the following:

\begin{proposition}\label{orb}
Let $\F$ be a foliation on a compact complex surface $X$ and let
$G<\aut(\F)$ be a finite subgroup. Then
\[
c_2(T_X\otimes K_{\F}) = \sum_{i=1}^s \frac{|G|}{|H_i|}\mu(\F,x_i),
\]
where $x_1, \dots, x_s\in \sing(\F)$ lie on disjoint orbits and
$H_i$ is the stabilizer of $x_i$ in $G$.
\end{proposition}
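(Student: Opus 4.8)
The statement is essentially the orbit--stabilizer formula combined with the Baum--Bott identity \eqref{BB} for the second Chern class, so the plan is to assemble these two ingredients carefully. First I would recall that any $\varphi\in G\subset\aut(\F)$ satisfies $\varphi^{*}\omega=f_\varphi\,\omega$ for a constant $f_\varphi$, hence $\varphi$ maps $\sing(\F)$ bijectively onto itself; thus $G$ acts on the finite set $M=\sing(\F)$. The key local point is that this action preserves the analytic type of the singularities: if $\varphi(x)=y$, then $\varphi$ gives a biholomorphism of germs $(X,x)\to(X,y)$ carrying a local generator $v$ of $\F$ at $x$ to a generator of $\F$ at $y$ (up to a nonvanishing holomorphic factor), so $Dv_x$ and $Dv_y$ are conjugate up to scaling and, more to the point, the ideals $\langle A,B\rangle$ defining the singularity are identified. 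Consequently $\mu(\F,x)=\mu(\F,y)$ whenever $x$ and $y$ lie in the same $G$-orbit, since $\mu(\F,p)$ is a Grothendieck residue depending only on the local analytic conjugacy class of the germ.

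Next I would apply the orbit decomposition recalled just before the proposition: write $M=\bigsqcup_{i=1}^{s}\mathrm{Orb}(x_i)$ with $|\mathrm{Orb}(x_i)|=|G|/|H_i|$, $H_i$ the stabilizer of $x_i$. Then
\[
\sum_{p\in\sing(\F)}\mu(\F,p)=\sum_{i=1}^{s}\sum_{p\in\mathrm{Orb}(x_i)}\mu(\F,p)=\sum_{i=1}^{s}|\mathrm{Orb}(x_i)|\,\mu(\F,x_i)=\sum_{i=1}^{s}\frac{|G|}{|H_i|}\mu(\F,x_i),
\]
using in the middle equality that $\mu(\F,\cdot)$ is constant on each orbit. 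Finally, I would invoke the Baum--Bott formula \eqref{BB}, namely $\sum_{p\in\sing(\F)}\mu(\F,p)=c_2(T_X\otimes K_{\F})=c_2(X)-K_{\F}\cdot K_X+K_{\F}^{2}$, to replace the left-hand sum by $c_2(T_X\otimes K_{\F})$ and obtain the claimed identity. (One should note $\sing(\F)$ is nonempty when $c_2(T_X\otimes K_\F)\neq 0$; if it is zero the formula is the trivial $0=0$, and if $\sing(\F)=\emptyset$ then both sides vanish as well.)

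The only genuinely substantive step is the invariance of the Milnor number under the $G$-action, i.e.\ that an automorphism of $X$ preserving $\F$ induces germ-level isomorphisms of the foliation at corresponding singular points; everything else is bookkeeping with the orbit--stabilizer identity and a direct appeal to \eqref{BB}. I expect the main (minor) obstacle to be stating this local invariance cleanly: one must observe that if $\varphi^{*}\omega=f_\varphi\omega$ globally, then near a singular point $x$ with $\varphi(x)=y$ the pullback of a local generator of $\F$ at $y$ is a unit multiple of a local generator at $x$, so $\varphi$ conjugates the two germs and hence preserves the Grothendieck residue $\mu$. This is routine once spelled out, so the proof is short.
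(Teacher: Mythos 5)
Your argument is correct and follows essentially the same route as the paper: the action of $G$ on $\sing(\F)$ preserves the Milnor number (since automorphisms preserving $\F$ carry local generators to local generators up to a unit), so $\mu(\F,\cdot)$ is constant on orbits, and the identity follows from the orbit--stabilizer decomposition together with the Baum--Bott formula \eqref{BB}. Your write-up is, if anything, slightly more explicit than the paper's about why the Grothendieck residue is preserved under the germ-level conjugation.
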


\begin{proof}
Let $e$ be the number of singularities counted without multiplicity, then
\[
e = \sum_{i=1}^s \#{\rm Orb}(x_i) = \sum_{i=1}^s \frac{|G|}{|H_i|},
\]
where the $x_1, \dots, x_s$ generate the disjoint orbits and $H_i$
is the stabilizer of $x_i$. Let $v$ be a germ of vector field that
defines $\F$ at a singularity $p$ and let $g\in G$. Then $g_{\ast}v$
defines $\F$ at $g(p)$ and
\[
\mu(\F,p) = \mu(\F,g(p)),
\]
since the Milnor number is invariant by biholomorphisms. This
implies that all singularities in a same orbit have the same
Milnor number, hence
\[
c_2(T_X\otimes K_{\F}) = \sum_{p\in\sing(\F)} \mu(\F,p) = \sum_{i=1}^s \#{\rm
Orb}(x_i)\mu(\F,x_i) = \sum_{i=1}^s \frac{|G|}{|H_i|}\mu(\F,x_i).
\]
\end{proof}

Recall that the Milnor number is always an integer. Then we can get interesting numeric relations between $\F$ and $\aut(\F)$. The indices of a foliation along a curve are also preserved under biholomorphisms. The tangency and the ${\rm Z}$-index are interesting to us because they are also integers. 

\begin{proposition}\label{orbind}
Let $\F$ be a foliation on a compact complex surface $X$ and let
$G<\aut(\F)$ be a finite subgroup. Let $C\subset X$ be a compact $G$--invariant curve. Then:
\begin{enumerate}
\item If all irreducible components of $C$ are not $\F$--invariant, then
\[
\tg(\F,C) = \sum_{i=1}^s \frac{|G|}{|H_i|}\tg(\F,C,x_i).
\] 
\item If all irreducible components of $C$ are $\F$--invariant, then
\[
{\rm Z}(\F,C) = \sum_{i=1}^s \frac{|G|}{|H_i|}{\rm Z}(\F,C,x_i).
\]
\end{enumerate}
The points $x_1, \dots, x_s\in C$ lie on disjoint orbits and $H_i$ is the stabilizer of $x_i$ in $G$.
\end{proposition}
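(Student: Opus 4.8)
The plan is to mimic the proof of Proposition \ref{orb}, replacing the singular set and the Milnor number by the appropriate finite set and local index in each of the two cases. Two ingredients are needed: (a) the local indices $\tg(\F,C,\cdot)$ and $Z(\F,C,\cdot)$ are invariant under biholomorphisms that preserve both $\F$ and $C$; (b) the finite set of points carrying a nonzero such index is $G$--invariant, and $G$ acts on it so that the index is constant along orbits. Then the global formulae \eqref{tang} and \eqref{gsv} express the total index as a sum over orbits, and the orbit--stabilizer formula finishes the argument exactly as before.

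First I would record (a). Let $\varphi\in G$, put $q=\varphi(p)$, let $f$ be a reduced local equation of $C$ at $q$ and let $v$ be a local vector field with isolated zeros defining $\F$ near $q$. Since $\varphi$ preserves $\F$ and $C$, the function $f\circ\varphi$ is a reduced local equation of $C$ at $p$ and the pull-back vector field $\varphi^{\ast}v$ defines $\F$ near $p$; the chain rule gives $(\varphi^{\ast}v)(f\circ\varphi)=\bigl(v(f)\bigr)\circ\varphi$, so $\varphi$ induces a $\C$--algebra isomorphism $\OO_{X,q}/\langle f, v(f)\rangle \xrightarrow{\sim} \OO_{X,p}/\langle f\circ\varphi,\ (\varphi^{\ast}v)(f\circ\varphi)\rangle$, whence $\tg(\F,C,p)=\tg(\F,C,q)$. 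For the $Z$--index, write $g\omega=h\,\di f+f\eta$ near $q$ as in \eqref{gsv}; applying $\varphi^{\ast}$ and using $\varphi^{\ast}\omega=c\,\omega$ for a constant $c\in\C^{\ast}$ (as $\varphi$ preserves $\F$), one gets $c(g\circ\varphi)\,\omega=(h\circ\varphi)\,\di(f\circ\varphi)+(f\circ\varphi)\,\varphi^{\ast}\eta$ near $p$, so the meromorphic function attached to $(\F,C)$ at $p$ is $c^{-1}$ times the one at $q$ composed with $\varphi$; its vanishing order along $C$ is therefore the same, i.e.\ $Z(\F,C,p)=Z(\F,C,q)$. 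This is precisely the statement, used above, that these indices are preserved under biholomorphisms.

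Now fix the relevant finite set. In case (1), every irreducible component of $C$ is non--$\F$--invariant (a condition preserved by $G$), the tangency locus $\{\,p\in C:\tg(\F,C,p)>0\,\}$ is finite, and it is $G$--invariant because $G$ fixes the pair $(\F,C)$. In case (2), the $Z$--index vanishes at points of $C$ regular for $\F$, so the sum in \eqref{gsv} runs over the finite, $G$--invariant set $\sing(\F)\cap C$. In either case choose orbit representatives $x_1,\dots,x_s$ with stabilizers $H_i$; by (a) the index is constant on ${\rm Orb}(x_i)$, so grouping the total sum by orbits and using the orbit--stabilizer formula exactly as in Proposition \ref{orb},
\[
\tg(\F,C)=\sum_{i=1}^{s}\#{\rm Orb}(x_i)\,\tg(\F,C,x_i)=\sum_{i=1}^{s}\frac{|G|}{|H_i|}\,\tg(\F,C,x_i),
\]
by \eqref{tang}, and identically with $Z$ in place of $\tg$ using \eqref{gsv}. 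I do not expect a real obstacle here; the only point that needs a little care is to sum over the tangency locus (resp.\ $\sing(\F)\cap C$) rather than over $\sing(\F)$, and to note that this locus is genuinely permuted by $G$, which is immediate from the $G$--invariance of $(\F,C)$.
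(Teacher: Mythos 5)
Your proposal is correct and follows exactly the route the paper takes: the paper's own proof simply says that since $C$ is $G$--invariant and the local indices are biholomorphism--invariant, the argument of Proposition \ref{orb} applies mutatis mutandis, which is precisely what you carry out (with the helpful extra detail of verifying the invariance of $\tg$ and ${\rm Z}$ and the $G$--invariance of the relevant finite loci).
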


\begin{proof}
Since $C$ is $G$--invariant and the indices are invariant under biholomorphisms, the proof 
follows, mutatis mutandis, the argument of Proposition \ref{orb}.
\end{proof}

\section{Automorphisms of Germs of Foliations}

This section is devoted to establish technical results about finite groups acting on $(\C^2,0)$ that preserve a given germ of foliation. The main purpose of this local study is to be able to understand, in the global case, the behavior of the automorphism group of a foliation around a fixed point. 

Let $\F$ be a germ of foliation on $(\C^2,0)$ defined by a germ of vector field $v$. If $w$ is another germ of vector field defining $\F$, then there exists a germ of nonvanishing holomorphic function $f\in \OO_{\C^2,0}^{\ast}$ such that
\[
w = fv.
\]
Therefore, an automorphism $\varphi \in {\rm Diff}(\C^2,0)$ preserves $\F$ if and only if $\varphi_{\ast}v = fv$ for some $f\in \OO_{\C^2,0}^{\ast}$. We define the group 
\[
\aut(\F,0) \ceq \left\{\varphi \in {\rm Diff}(\C^2,0) \mid \exists f\in \OO_{\C^2,0}^{\ast} \mid  \varphi_{\ast}v = fv \right\},
\]
of automorphisms that preserve $\F$. Since we are interested only in the local behavior of automorphisms of compact surfaces around a fixed point, we make the further assumption that, for some choice of $v$, $f$ is constant.  We will consider the finite subgroups of the following group:
\[
\aut(v,0) \ceq \left\{\varphi \in {\rm Diff}(\C^2,0) \mid \exists f\in \C^{\ast} \mid  \varphi_{\ast}v = fv \right\} < \aut(\F,0).
\]
Observe that one can state an analogous definition with germs of $1$-forms. 

The first two general facts to notice are that every finite group of germs of holomorphic automorphisms is linearizable and well-behaved under blow-ups.

\begin{lemma}\label{finlin}
Let $G$ be a finite subgroup of ${\rm Diff}(\C^n,0)$. There exists $T\in {\rm Diff}(\C^n,0)$ such that $TGT^{-1}\subset \gl(n,\C)$.
\end{lemma}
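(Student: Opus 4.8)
The plan is to use the classical averaging trick (Bochner's linearization theorem for finite groups), which works by producing a $G$-equivariant change of coordinates from the linearization of the averaged diffeomorphism.

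\medskip

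\noindent\emph{Sketch of proof.} For each $g \in G$ write its linear part as $L_g = Dg(0) \in \gl(n,\C)$; since $(fg)(0) = 0$ for all $f,g \in G$, the chain rule gives $L_{fg} = L_f L_g$, so $g \mapsto L_g$ is a group homomorphism $G \to \gl(n,\C)$. Define
\[
T \ceq \frac{1}{|G|} \sum_{g \in G} L_g^{-1} \circ g,
\]
a germ of holomorphic map $(\C^n,0) \to (\C^n,0)$. Its differential at the origin is $DT(0) = \frac{1}{|G|}\sum_{g} L_g^{-1} L_g = \id$, so $T \in {\rm Diff}(\C^n,0)$ by the inverse function theorem. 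The key computation is that $T$ intertwines the $G$-action with the linear action: for $h \in G$,
\[
T \circ h = \frac{1}{|G|}\sum_{g\in G} L_g^{-1}\circ (g h) = \frac{1}{|G|}\sum_{g'\in G} L_{g' h^{-1}}^{-1}\circ g' = L_h \circ \frac{1}{|G|}\sum_{g'\in G} L_{g'}^{-1}\circ g' = L_h \circ T,
\]
where we substituted $g' = g h$ and used $L_{g'h^{-1}}^{-1} = L_h L_{g'}^{-1}$. Hence $T h T^{-1} = L_h \in \gl(n,\C)$ for every $h \in G$, so $T G T^{-1} \subset \gl(n,\C)$.

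\medskip

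\noindent The only point requiring care is convergence/well-definedness: $T$ is a finite average of germs of biholomorphisms fixing $0$, hence itself a germ of holomorphic map fixing $0$, and the computation of $DT(0)$ shows it is invertible, so there is no analytic subtlety — the argument is purely formal once Bochner's idea is in place. I expect no real obstacle here; the substitution $g' = gh$ in the sum and the homomorphism property $L_{fg} = L_f L_g$ are the whole content.
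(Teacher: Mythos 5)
Your proof is correct and is essentially identical to the paper's: both define $T=\frac{1}{|G|}\sum_{g\in G}Dg_0^{-1}\circ g$, note $DT_0=\mathrm{id}$, and verify the intertwining relation $T\circ h=Dh_0\circ T$. Your write-up just makes explicit the homomorphism property of $g\mapsto Dg_0$ and the reindexing in the sum, which the paper leaves implicit.
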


\begin{proof}
Define $T$ by
\[
T\ceq \frac{1}{|G|} \sum_{\varphi\in G} D\varphi_0^{-1}\cdot\varphi.
\]
It is clear that $DT_0$ is the identity, hence $T\in {\rm Diff}(\C^n,0)$. Also, observe that $T\circ \varphi = D\varphi_0\cdot T$ for every $\varphi \in G$. Then 
\[
T\varphi T^{-1} =  D\varphi_0 \in \gl(n,\C).
\]
\end{proof}

\begin{remark}\label{cic}
Notice that for $n=1$ the group $G$ is isomorphic to a subgroup of $\gl(1,\C) = \C^{\ast}$, hence it is cyclic. In particular, a finite group acting faithfully on a curve with a smooth fixed point is cyclic.
\end{remark}

Consider   $\pi\colon (X,E) \longrightarrow (\C^2,0)$   the blow-up at the origin, where $E$ denotes the exceptional divisor. Then the following holds.

\begin{lemma}\label{blowup}
Let $G$ be a finite subgroup of ${\rm Diff}(\C^2,0)$. Then, $G$ acts faithfully on $(X,E)$.
\end{lemma}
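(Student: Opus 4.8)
The plan is to show that the lifted action of $G$ on the blow-up $(X,E)$ has trivial kernel. Since $G$ is a finite subgroup of $\mathrm{Diff}(\C^2,0)$, by Lemma \ref{finlin} we may assume (after a common analytic change of coordinates, which commutes with blowing up the origin) that $G \subset \mathrm{GL}(2,\C)$. Each $g \in G$ is then a linear automorphism fixing $0$, so it lifts to an automorphism $\tilde g$ of the blow-up $X$ of $\C^2$ at the origin, preserving the exceptional divisor $E \cong \PP^1$. The assignment $g \mapsto \tilde g$ is a group homomorphism, and I want to prove it is injective, equivalently that $\tilde g = \mathrm{id}$ forces $g = \mathrm{id}$.

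First I would recall that $E$ is canonically identified with $\PP(T_0\C^2) = \PP^1$, and that the action of $\tilde g$ on $E$ is exactly the projectivization of the linear map $g \in \mathrm{GL}(2,\C)$, i.e.\ the induced element of $\mathrm{PGL}(2,\C)$. Now suppose $\tilde g$ acts as the identity on all of $X$; in particular it acts trivially on $E$, so the image of $g$ in $\mathrm{PGL}(2,\C)$ is trivial, which means $g = \lambda \cdot \mathrm{Id}$ is a scalar matrix for some $\lambda \in \C^{\ast}$. It remains to exclude the nontrivial scalars. For $g = \lambda\,\mathrm{Id}$ with $\lambda \neq 1$, I would examine the action of $\tilde g$ transverse to $E$: in a standard affine chart $(x, t) \mapsto (x, xt)$ of the blow-up, the map $g(u,v) = (\lambda u, \lambda v)$ lifts to $\tilde g(x,t) = (\lambda x, t)$, whose action on the coordinate $x$ — a local equation for $E$ — is multiplication by $\lambda \neq 1$. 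Hence $\tilde g$ is not the identity on any neighborhood of a point of $E$, a contradiction. Therefore $g = \mathrm{Id}$, and the homomorphism $G \to \mathrm{Aut}(X,E)$ is injective, i.e.\ $G$ acts faithfully.

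An alternative, coordinate-light way to phrase the last step: since $g \mapsto \tilde g$ is a homomorphism of finite groups and $\pi \circ \tilde g = g \circ \pi$, any $g$ in the kernel of the lifted action descends to the identity on $\C^2 = X \setminus E$ up to the birational identification, hence $g = \mathrm{id}$ on a dense open set and thus $g = \mathrm{id}$; the only subtlety is that the lift is a priori only defined after the linearization, but since conjugating $G$ by $T \in \mathrm{Diff}(\C^2,0)$ does not affect whether the action is faithful, this is harmless.

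\textbf{The main obstacle} is simply ruling out the nontrivial scalar matrices $\lambda\,\mathrm{Id}$, which act trivially on $E$ itself; one must look at the normal direction to $E$ to see that the lift still moves points. Everything else — linearizing $G$, checking that linear maps lift, checking that $g \mapsto \tilde g$ is a homomorphism — is routine, so I would keep that brief and spend the few lines of the proof on the transverse computation above.
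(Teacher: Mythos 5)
Your proposal is correct and is essentially the paper's argument: the paper linearizes $G$, writes the lift $\tilde\varphi$ explicitly in a chart of the blow-up, notes that $\varphi\mapsto\tilde\varphi$ is a homomorphism satisfying $\varphi\circ\pi=\pi\circ\tilde\varphi$, and concludes injectivity precisely because $\pi$ is a biholomorphism off $E$ --- i.e.\ exactly your ``alternative, coordinate-light'' phrasing. Your main line of argument (trivial on $E$ forces $g=\lambda\,\mathrm{Id}$, then the transverse computation $\tilde g(x,t)=(\lambda x,t)$ excludes $\lambda\neq 1$) is also valid but is a longer route to the same conclusion; the one-line observation about $\pi$ being an isomorphism off $E$ already finishes the proof.
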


\begin{proof}
Fix an element of $G$, say 
\[
\varphi(x,y) = (ax+by, cx+dy), 
\]
with $ad-bc \neq 0$. Choosing coordinates $(x,y;u: t)$ on $(X,E)$ such that
$\pi(x,y;u: t) = (x,y)$, we can define an automorphism of $(X,E)$ by
\[
\tilde{\varphi}(x,y;u : t) = (ax+by, cx+dy; au+bt : cu+dt).
\]
The map $\varphi \mapsto \tilde{\varphi}$ defines a homomorphism with the property $\varphi\circ\pi = \pi\circ\tilde{\varphi}$. Since $\pi$ is a germ of biholomorphism outside the exceptional divisor, this map is injective.
\end{proof} 

Before we proceed to the study of the automorphisms of germs of foliations, we need to state a simple technical lemma.

\begin{lemma}\label{cong}
Let $G$ be a finite linear diagonal subgroup of ${\rm Diff}(\C^2,0)$. A general element of $G$ is written as
\[
\varphi(x,y)=(l^ax,l^by)
\] 
with $l^n =1$ primitive, $a,b\in \Z$ and $\gcd(a,b,n)=1$. If there exists $u\in\Z$ such that
\[
a\equiv bu\pmod{n} 
\]
for all $\varphi\in G$, then $G$ is cyclic. Moreover, $G$ has a generator of the form
\[
\psi(x,y) = (\zeta^{u}x, \zeta y),
\]
where $\zeta$ is a root of the unity.
\end{lemma}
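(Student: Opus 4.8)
The plan is to exploit the asymmetry of the hypothesis $a\equiv bu\pmod n$ by projecting $G$ onto the \emph{second} factor of the diagonal torus. Define $\rho\colon G\to\C^{\ast}$ by $\rho(\varphi)=\mu$ whenever $\varphi(x,y)=(\lambda x,\mu y)$; since every element of $G$ is a diagonal linear map, $\rho$ is a group homomorphism. Before using it I would record two elementary observations. First, for $\varphi(x,y)=(l^ax,l^by)$ with $l$ a primitive $n$-th root of unity, the order of $\varphi$ equals $\operatorname{lcm}(n/\gcd(a,n),\,n/\gcd(b,n))=n/\gcd(a,b,n)$; hence the normalization $\gcd(a,b,n)=1$ forces $n=\operatorname{ord}(\varphi)$. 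Second, the condition ``$a\equiv bu\pmod n$'' does not depend on the choice of primitive root $l$ (replacing $l$ by $l^{k}$ with $\gcd(k,n)=1$ replaces $(a,b)$ by $(k^{-1}a,k^{-1}b)$ modulo $n$, which leaves the congruence unchanged), so the hypothesis is a well-posed condition attached to each $\varphi\in G$.

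The key step will be that $\rho$ is injective. Suppose $\varphi\in\ker\rho$ and write it in normal form $\varphi(x,y)=(l^ax,l^by)$ with $\gcd(a,b,n)=1$. Then $\rho(\varphi)=l^b=1$ means $b\equiv0\pmod n$, and feeding this into the hypothesis gives $a\equiv bu\equiv0\pmod n$, hence $l^a=1$ and $\varphi=\operatorname{id}$. Therefore $G\cong\rho(G)$, which is a finite subgroup of $\C^{\ast}$ and thus cyclic; set $m=|G|$ and let $\zeta$ be a primitive $m$-th root of unity generating $\rho(G)$.

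Finally I would identify the generator. Since $\rho$ is an isomorphism there is a unique $\psi\in G$ with $\rho(\psi)=\zeta$, and $\psi$ generates $G$; write $\psi(x,y)=(\lambda_{0}x,\zeta y)$. As $\psi$ generates $G$, $\operatorname{ord}(\psi)=m$, so in its normal form $\psi(x,y)=(l^ax,l^by)$ we must have $n=m$, $l$ a primitive $m$-th root of unity, and $l^{b}=\zeta$. Applying the hypothesis to $\psi$ gives $a\equiv bu\pmod m$, whence $\lambda_{0}=l^{a}=(l^{b})^{u}=\zeta^{u}$, i.e.\ $\psi(x,y)=(\zeta^{u}x,\zeta y)$ generates $G$, as claimed. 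The only point that needs care is precisely this asymmetric role of the two coordinates: projecting onto the first factor would not, in general, have trivial kernel, so it is essential that $\rho$ records the second eigenvalue; everything else is the standard structure theory of finite subgroups of $\C^{\ast}$.
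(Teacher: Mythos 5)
Your proof is correct, and it takes a genuinely different route from the paper's. The paper argues by hand: it first uses the congruence together with $\gcd(a,b,n)=1$ to deduce $\gcd(b,n)=1$, replaces each $\varphi$ by the power $\varphi^{t}$ with $bt\equiv 1\pmod n$ so that it takes the form $(l^{u}x,ly)$, and then shows that an element of maximal order $n$ generates $G$: since $G$ is abelian it contains an element of order $\mathrm{lcm}(m,n)$ for any other occurring order $m$, so $m\mid n$ and the corresponding element is a power of the maximal one. You instead package the whole argument into the second-eigenvalue character $\rho\colon G\to\C^{\ast}$, observe that the hypothesis $a\equiv bu\pmod n$ forces $\ker\rho$ to be trivial (if $l^{b}=1$ then $l^{a}=l^{bu}=1$), and invoke the fact that a finite subgroup of $\C^{\ast}$ is cyclic; the generator is then read off by applying the congruence to the preimage of a generator of $\rho(G)$. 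Your route is shorter and more structural --- it isolates exactly where the asymmetric hypothesis enters (injectivity of $\rho$, which as you note would fail for the first-factor projection) and dispenses with the maximal-order/lcm bookkeeping --- while the paper's computation has the minor side benefit of exhibiting explicitly which power of a given element already has the normal form $(l^{u}x,ly)$. Your two preliminary checks, that $\gcd(a,b,n)=1$ pins the order of $\varphi$ to $n$ and that the congruence is independent of the choice of primitive root $l$, are exactly what is needed for the hypothesis to be well-posed and for the identification $n=m$ at the end; both are verified correctly.
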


\begin{proof}
The equation $a\equiv bu\pmod{n}$ implies that $\gcd(b,n)\mid a$. Since $\gcd(a,b,n)=1$ we have that $\gcd(b,n)=1$. Let $t\in \Z$ such that $bt\equiv 1 \pmod{n}$, then
\[
\varphi^t(x,y) = (l^{u}x,ly),
\]
which generates the same group as $\varphi$. 

Suppose that $n$ is the maximum order of an element of $G$. We need to show that $\varphi$ generates $G$. Let $\psi$ be another element in $G$. Then we can assume that 
\[
\psi(x,y) = (\zeta^{u}x, \zeta y),
\]
with $\zeta^m = 1$ primitive and $m\leq n$. The group $G$ must have an element of order $\lcm(m,n) \geq n$. By the maximality of $n$, we have that $n=md$ for some $d\geq1$. Therefore, $\zeta = l^{sd}$ for some $s$ invertible modulo $n$, which implies $\psi \in \left<\varphi\right>$. Hence $G$ is cyclic.
\end{proof}

\subsection{Automorphisms of Regular Foliations}

Let $\F$ be a germ of regular foliation on $(\C^2,0)$ given by a germ vector field $v$. Denote by $\omega$ the dual $1$-form. By regularity, we may assume that
\[
\omega = \di f,
\]
where $f$ is a germ of submersion. For any $\varphi$ that preserves $\F$, there exists an automorphism $\chi(\varphi)\in {\rm Diff}(\C,0)$ such that 
\[
f \circ \varphi^{-1} = \chi(\varphi) \circ f.
\]
Notice that for another $\psi\in\aut(\F,0)$, 
\[
\chi(\varphi\circ \psi^{-1}) \circ f = f \circ  \psi\circ\varphi^{-1} = (\chi(\psi^{-1}) \circ f)\circ\varphi^{-1} = \chi(\psi^{-1})\circ\chi(\varphi)\circ f.
\]
Then the map $\chi\colon \aut(\F,0) \longrightarrow {\rm Diff}(\C,0)$ is an antihomomorphism. We will see that, for any $G<\aut(\F,0)$ finite, $G$, $\chi(G)$ and $\F$ can be simultaneously linearized.

\begin{proposition}\label{streg}
Let $\F$ be a germ of regular foliation on $(\C^2,0)$ and let $G$ be a
finite subgroup of $\aut(\F,0)$. Then $G$ is abelian. Moreover, there exist coordinates $(x,y)$ where $G$ is linear diagonal and $\F$ is given by $\mbox{dx}$ or, equivalently, by $\partial_y$.
\end{proposition}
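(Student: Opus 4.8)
The plan is to run the germ through the linearization toolkit already assembled (Lemma~\ref{finlin}, Remark~\ref{cic}) together with the antihomomorphism $\chi\colon\aut(\F,0)\to{\rm Diff}(\C,0)$, taking care at every coordinate change that the form $\F=\{\mathrm{d}f\}$ is not destroyed. Write $\omega=\mathrm{d}f$ with $f$ a submersion germ, $f(0)=0$. First I would linearize $\chi(G)$: it is a finite subgroup of ${\rm Diff}(\C,0)$, hence cyclic by Remark~\ref{cic}, so there is $\tau$ with $\tau\chi(\varphi)\tau^{-1}(t)=\mu_\varphi t$; replacing $f$ by $\tau\circ f$ (still a submersion defining $\F$, since $\mathrm{d}(\tau\circ f)=\tau'(f)\,\mathrm{d}f$ and $\tau'(0)\ne0$) we may assume $\chi(\varphi)(t)=\mu_\varphi t$. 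Because $\chi$ is an antihomomorphism with abelian image, $\varphi\mapsto\mu_\varphi$ is a homomorphism $G\to\C^\ast$. Now choose coordinates $(u,v)$ in which $f=u$, so $\F=\{\mathrm{d}u\}$; the identity $f\circ\varphi=\chi(\varphi)^{-1}\circ f$ then forces $\varphi(u,v)=(\mu_\varphi^{-1}u,\beta_\varphi(u,v))$, so in particular $D\varphi_0$ is lower triangular for every $\varphi\in G$.

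Next I would linearize $G$ using Lemma~\ref{finlin}. The key point is that the averaging operator $T=\frac{1}{|G|}\sum_{\varphi\in G}D\varphi_0^{-1}\varphi$ has first component identically equal to $u$: for each $\varphi$, $D\varphi_0^{-1}$ is lower triangular with $(1,1)$-entry $\mu_\varphi$, so applying it to $\varphi(u,v)=(\mu_\varphi^{-1}u,\beta_\varphi(u,v))$ returns first component $\mu_\varphi\cdot\mu_\varphi^{-1}u=u$. Hence $T$ preserves the fibration $\{u=\text{const}\}$, so $T_\ast\F=\F$; conjugating by $T$ we may therefore assume $G\subset\gl(2,\C)$ with $\F=\{\mathrm{d}u\}$ still. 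A linear automorphism preserving $\F$ must send each fiber $\{u=c\}$ to a fiber, hence is lower triangular; so every $M\in G$ is lower triangular and $e_2=(0,1)$ is a common eigenvector.

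Then I would extract abelianness and simultaneous diagonalization. The assignment $M\mapsto(\text{diagonal entries of }M)$ is a homomorphism $G\to(\C^\ast)^2$ whose kernel consists of unipotent lower-triangular matrices; that kernel is isomorphic to $(\C,+)$, which is torsion free, so it is trivial because $G$ is finite. Thus $G$ embeds into $(\C^\ast)^2$ and is abelian. A finite abelian group of matrices is simultaneously diagonalizable; since the line $\C e_2$ is $G$-invariant we may take $e_2$ as one vector of a common eigenbasis $\{w,e_2\}$ with $w=(w_1,w_2)$, $w_1\ne0$. In the linear coordinates $(x,y)$ attached to this basis $G$ is diagonal, and the fiber $\{u=c\}$ becomes $\{x=c/w_1\}$, so $\F=\{\mathrm{d}x\}=\{\partial_y\}$, as desired.

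I expect the only real obstacle to be the compatibility bookkeeping just highlighted: each of the three changes of coordinates — the linearization $\tau$ on the base, the averaging map $T$, and the diagonalizing matrix — must be checked to preserve $\{\mathrm{d}f\}$, and the reason each does is structural: the block form $\varphi=(\mu_\varphi^{-1}u,\beta_\varphi)$ imposed by $\chi$ in the first two cases, and the $G$-invariance of $\{f=0\}$ in the last. Everything else is routine.
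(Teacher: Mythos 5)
Your proof is correct and follows essentially the same route as the paper: linearize via Lemma~\ref{finlin} and Remark~\ref{cic}, observe that the invariance of the leaf direction forces every $D\varphi_0$ to be lower triangular, deduce abelianness from the torsion-freeness of the unipotent part, diagonalize, and straighten the submersion. The only difference is bookkeeping order — you straighten $f$ first and check that the averaging operator and the diagonalizing change of basis preserve $\{\mathrm{d}u\}$, whereas the paper linearizes and diagonalizes first and then straightens with $S(x,y)=(f(x,y),y)$ — and both verifications are sound.
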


\begin{proof}
Let $f$ be a holomorphic submersion defining $\F$. By Lemma \ref{finlin}, there exist $T \in {\rm Diff}(\C^2,0)$ and $U\in {\rm Diff}(\C,0)$ such that $TGT^{-1}$ and $U\chi(G)U^{-1}$ are linear. The restriction of $\chi$ to $G$ is an homomorphism by Remark \ref{cic}.  In these new coordinates, $\F$ is given by $U\circ f\circ T^{-1}$. Then we may assume that $G$ is linear and
\begin{equation}\label{lin1}
f\circ \varphi^{-1} = \chi(\varphi) f, \, \chi(\varphi)\in \C^{\ast},
\end{equation}
for every $\varphi \in G$. Suppose without loss of generality that $f$ is tangent to the $y$-axis ($\frac{\partial f}{\partial y}(0) = 0$) and let 
\[
\varphi(x,y)=(ax+by,cx+dy)
\] 
be an element of $G$. The equation (\ref{lin1}) says that the tangent vector to $f$ at the origin is an eigenvector for $\varphi$. Then $b=0$ and $G$ is a group of lower triangular matrices. Every finite group of lower triangular matrices is abelian. Indeed, the commutator of two such matrices either has infinite order or is the identity. Up to a linear change of coordinates, $G$ is diagonal and $f$ is still tangent to the $y$-axis.

Now, let $S\colon  (\C^2,0) \longrightarrow (\C^2,0)$ be defined by
\[
S(x,y) = (f(x,y), y).
\]
Observe  that $S\in {\rm Diff}(\C^2,0)$ and $f\circ S^{-1}=x$. For $\varphi(x,y)=(ax,dy)\in G$ we have that 
\[
S\circ\varphi\circ S^{-1}(x,y) = \left(\chi(\varphi^{-1})(f\circ S^{-1})(x,y),d(y\circ S^{-1})(x,y)\right) = \left(\chi(\varphi^{-1})x,dy\right).
\]

Therefore, after changing the coordinates,  $G$ is linear diagonal and $\F$ is given by $\dx$ or, equivalently, $\partial_y$.
\end{proof}

\begin{corollary}\label{fibloc}
Let $\F$ be a germ of regular foliation on $(\C^2,0)$ and let $G$ be a
finite subgroup of $\aut(\F,0)$. Then there are cyclic groups $K$ and $H$ such that the sequence
\[
1 \longrightarrow K \longrightarrow G \longrightarrow H \longrightarrow 1
\]
is exact. The group $K$ maps each leaf of $\F$ onto itself.
\end{corollary}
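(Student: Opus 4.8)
The plan is to read everything off the normal form provided by Proposition~\ref{streg}. By that proposition I may fix coordinates $(x,y)$ on $(\C^2,0)$ in which $G$ is a finite linear diagonal subgroup of $\gl(2,\C)$ and $\F$ is defined by $\dx$; hence the leaves of $\F$ are the germs of the vertical lines $\{x=c\}$, and a general element of $G$ has the form $\varphi(x,y)=(a\,x,\,d\,y)$ with $a,d$ roots of unity.

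Next I would define $\rho\colon G\longrightarrow\C^{\ast}$ by $\rho(\varphi)=a$, the eigenvalue on the $x$-coordinate, equivalently the action induced by $\varphi$ on the leaf space $(\C,0)$ parametrized by $x$. Diagonality of $G$ makes $\rho$ a homomorphism at once. I then set $K\ceq\ker\rho$ and $H\ceq\rho(G)$, so that the sequence
\[
1 \longrightarrow K \longrightarrow G \stackrel{\rho}{\longrightarrow} H \longrightarrow 1
\]
is exact by construction, with $K,H$ finite. The group $H$ is a finite subgroup of $\gl(1,\C)=\C^{\ast}$, so it is cyclic by Remark~\ref{cic}.

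Finally I would analyse $K$: its elements are exactly the maps $\varphi(x,y)=(x,\,dy)$, which fix every vertical line $\{x=c\}$ setwise, so $K$ maps each leaf of $\F$ onto itself --- the last assertion of the corollary. Moreover the assignment $\varphi\mapsto d$ is an injective homomorphism of $K$ into $\C^{\ast}$ (if $d=1$ then $\varphi=\id$), whence $K$ is cyclic as well, again by Remark~\ref{cic}. There is no genuine obstacle here once Proposition~\ref{streg} is in hand; the only two points worth a line of verification are that $\rho$ is multiplicative (immediate from diagonality) and that $\ker\rho$ acts leafwise (immediate from the eigenvalue on $x$ being $1$).
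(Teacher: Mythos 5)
Your proposal is correct and follows essentially the same route as the paper: both reduce to the normal form of Proposition~\ref{streg} (diagonal $G$, $\F$ given by $\dx$), take $K$ and $H$ to be the kernel and image of the homomorphism recording the action on the $x$-coordinate (the paper's $\chi$ restricted to $G$), and conclude cyclicity from the embedding into $\C^{\ast}$ as in Remark~\ref{cic}. No issues.
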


\begin{proof}
By Proposition \ref{streg}, we can choose coordinates coordinates where $G$ is abelian and $\F$ is defined by $\dx$. Moreover, 
\[
\chi\colon G \longrightarrow {\rm Diff}(\C,0)
\]
is an homomorphism and any element $\varphi \in G$ can be written as 
\[
\varphi(x,y) = \left(\chi(\varphi^{-1})x,dy\right).
\]
Observe that the leaves of $\F$ are the curves $\{x=c\}$, with $c$ constant. To conclude, define
\[
K\ceq \ker(\chi), \, H\ceq \chi(G).
\]
Remark \ref{cic} implies that both $K$ and $H$ are cyclic.
\end{proof}

We have seen that there are coordinates where a foliation $\F$ and a finite group of automorphisms $G$ can be both linearized. In the presence of a $G$--invarant smooth curve that is not $\F$--invariant, we can show that $G$ must be cyclic.

\begin{proposition}\label{tgreg}
Let $\F$ be a germ of regular foliation on $(\C^2,0)$ and let $G$ be a finite subgroup of $\aut(\F,0)$. Let $C$ be a $G$--invariant germ of smooth curve not $\F$--invariant. If $\tg(\F,C, 0)\geq 1$ then, $G$ is cyclic and  there exist coordinates $(x,y)$ such that $G$ is generated by an element of the form
\[
\varphi(x,y) = (l^{k+1}x,ly),
\]
where $l$ is a root of the unity and $k = \tg(\F,C, 0)$. 
\end{proposition}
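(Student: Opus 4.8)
The plan is to put $(\F,G)$ into the normal form of Proposition~\ref{streg} and then read off the generator from the tangency hypothesis. \textbf{Step 1 (normal form).} By Proposition~\ref{streg} we may choose coordinates $(x,y)$ on $(\C^2,0)$ in which $G$ is a finite linear diagonal group and $\F$ is the foliation defined by $\dx$; thus $v=\partial_y$ generates $\F$ and the leaves of $\F$ are the vertical lines $\{x=c\}$. Both the $G$-invariance of $C$ and the tangency index $\tg(\F,C,0)$ are biholomorphic invariants, so from now on we work in these coordinates.

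\textbf{Step 2 ($C$ is tangent to the leaf through $0$).} I claim $T_0C=\{x=0\}$. If not, then $T_0C$ is non-vertical and, by the implicit function theorem, $C$ is a smooth graph $\{y=\theta(x)\}$ with $\theta(0)=0$; taking the reduced equation $f=y-\theta(x)$ we get $v(f)=\partial_yf=1$, so $\langle f,v(f)\rangle=\OO_{\C^2,0}$ and $\tg(\F,C,0)=0$, contradicting $k\ge1$. Hence $C=\{x=\psi(y)\}$ with $\psi(0)=\psi'(0)=0$, and $\psi\not\equiv0$ since $C$ is not $\F$-invariant. With $f=x-\psi(y)$ one has $v(f)=-\psi'(y)$, and eliminating $x$ gives
\[
\tg(\F,C,0)=\dim_\C\frac{\C\{y\}}{(\psi'(y))}=\mathrm{ord}_0\psi'(y)=\mathrm{ord}_0\psi(y)-1 ,
\]
so $\mathrm{ord}_0\psi=k+1$; write $\psi(y)=\sum_{j\ge k+1}c_jy^j$ with $c_{k+1}\ne0$.

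\textbf{Step 3 (the group is cyclic with the prescribed generator).} Every $\varphi\in G$ has the form $\varphi(x,y)=(\alpha x,\beta y)$ with $\alpha,\beta$ roots of unity. Imposing $\varphi(C)=C$ on the parametrization $t\mapsto(\psi(t),t)$ gives $\alpha\,\psi(t)=\psi(\beta t)$ identically; comparing the coefficient of $t^{k+1}$ yields $\alpha=\beta^{k+1}$. Hence $\varphi\mapsto\beta$ defines an injective homomorphism $G\hookrightarrow\C^{\ast}$ (if $\beta=1$ then $\alpha=1$ and $\varphi=\mathrm{id}$), so $G$ is cyclic of order $m=|G|$, its image is $\mu_m$, and picking $l$ a primitive $m$-th root of unity corresponding to a generator shows that generator is $\varphi(x,y)=(l^{k+1}x,ly)$, as claimed. (Equivalently, writing $\varphi=(l^ax,l^by)$ the relation $\alpha=\beta^{k+1}$ is exactly the congruence hypothesis of Lemma~\ref{cong} with $u=k+1$, which one may invoke directly.)

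\textbf{Main obstacle.} There is no genuine difficulty here; the one point that must be used carefully is that the hypothesis $\tg(\F,C,0)\ge1$ is precisely what forces $C$ to be tangent to $\F$ at the origin — for a $C$ transverse to $\F$ the group need not be cyclic — after which the computation of the tangency index in terms of $\mathrm{ord}_0\psi$ and the translation of $G$-invariance into the form required by Lemma~\ref{cong} are routine.
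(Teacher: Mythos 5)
Your proof is correct and follows essentially the same route as the paper: both reduce to the normal form of Proposition \ref{streg}, identify the contact order of $C$ with the leaf through the origin as $k+1$, and extract the eigenvalue relation $\alpha=\beta^{k+1}$ (i.e.\ $a\equiv(k+1)b$) from the $G$-invariance of $C$. The only cosmetic differences are that you conclude cyclicity via the injective character $\varphi\mapsto\beta$ rather than invoking Lemma \ref{cong}, and you make explicit the observation that $\tg(\F,C,0)\geq 1$ forces $T_0C$ to be the leaf direction, which the paper leaves implicit in its Weierstrass normal form for $g$.
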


\begin{proof}
By Proposition \ref{streg}, we may take coordinates $(x,y)$ such that $G$ is diagonal and $\F$ is given by $\partial_y$. Let $\{g=0\}$ be a reduced equation for $C$. Then $x$ does not divide $g$, since $C$ is not $\F$--invariant. By the Weierstrass Preparation Theorem, we may assume that $g = x + y^{k+1}h(y)$, where $k\geq0$ and $h$ is holomorphic and $h(0)\neq 0$. The second nontrivial jet of $g$ at the origin is 
\[
j_0^{k+1}g(x,y) = x + y^{k+1}h(0).
\]
Let $\varphi(x,y)=(l^ax,l^by)$ be an element of $G$ whose order is $n$ ($l^n=1$  primitive and $\gcd(a,b,n)=1$). Then
\[
j_0^{k+1}g\circ\varphi(x,y) = l^ax + l^{(k+1)b}y^{k+1}h(0), 
\]
which implies that $a\equiv (k+1)b \pmod{n}$. By Lemma \ref{cong}, $G$ is cyclic and we can assume that 
\[
\varphi(x,y) = (l^{k+1}x,ly)
\]
generates $G$. It remains to show that $k=\tg(\F,C,0)$. Observe that we have $\frac{\partial g}{\partial y}=y^ku(y)$ with $u(0)\neq 0$. Then, 
\[
\tg(\F,C,0) = {\rm dim}_{\C}\frac{\mathcal{O}_{\C^2,0}}{\left< g, \partial_y(g)\right>} = {\rm dim}_{\C}\frac{\mathcal{O}_{\C^2,0}}{\left< x+y^{k+1}h(y), y^k \right>} = {\rm dim}_{\C}\frac{\mathcal{O}_{\C^2,0}}{\left< x, y^k \right>} = k.
\]
\end{proof}

\begin{proposition}\label{tgvec}
Let $\F$ be a germ of regular foliation on $(\C^2,0)$ and let $v$ be a germ of vector field with an isolated singularity at the origin. Let $G$ be a finite subgroup of $\aut(\F,0)\cap \aut(v,0)$. Denote by $\F_0$ the germ of $\F$-leaf at the origin. If $v$ does not leave $\F_0$ invariant, then the subgroup $K<G$ which fixes the leaves of $\F$ has bounded order:
\[
|K| \leq \tg(v,\F_0,0)+1.
\]
\end{proposition}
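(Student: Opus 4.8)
The plan is to pass to the explicit normal form of Proposition~\ref{streg} and then read the bound on $|K|$ off the way the leaf-fixing automorphisms act on the coefficients of $v$. First I would use Proposition~\ref{streg} to choose coordinates $(x,y)$ on $(\C^2,0)$ in which $G$ is linear diagonal and $\F$ is the foliation $dx$; the leaves of $\F$ are then the lines $\{x=c\}$ and $\F_0=\{x=0\}$. A germ of biholomorphism of $(\C^2,0)$ neither destroys the isolated singularity of $v$ nor the relations $\varphi_{\ast}v=f_\varphi v$ (it only conjugates the action), and it preserves the tangency index $\tg(v,\F_0,0)$ as well as the hypothesis that $\F_0$ is not $v$-invariant, so we may assume we already are in these coordinates, still writing $v$ for the transformed vector field. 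The subgroup $K\le G$ fixing every leaf is the one acting trivially on the $x$-coordinate, so by Proposition~\ref{streg} (or Corollary~\ref{fibloc}) each element of $K$ has the form $(x,y)\mapsto(x,\delta y)$; hence $K$ embeds into $\C^{\ast}$ and is cyclic. Write $m\ceq|K|$ and let $\psi(x,y)=(x,\zeta y)$ be a generator, $\zeta$ a primitive $m$-th root of unity.

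Now write $v=A(x,y)\,\partial_x+B(x,y)\,\partial_y$. The isolated singularity forces $A(0,0)=B(0,0)=0$, and $\F_0$ not being $v$-invariant gives $A(0,y)\not\equiv0$; set $k\ceq\mathrm{ord}_{y=0}A(0,y)$, so $1\le k<\infty$. Since $v(x)=A$, we get $\tg(v,\F_0,0)=\dim_{\C}\OO_{\C^2,0}/\langle x,A\rangle=\dim_{\C}\OO_{\C,0}/\langle A(0,y)\rangle=k$, so it is enough to show $m\le k+1$. As $\psi\in\aut(v,0)$ there is a constant $f\in\C^{\ast}$ with $\psi_{\ast}v=fv$, and computing this pushforward gives $A(x,\zeta^{-1}y)=f\,A(x,y)$ and $\zeta\,B(x,\zeta^{-1}y)=f\,B(x,y)$. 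Comparing the coefficient of $y^{k}$ on the two sides of $A(0,\zeta^{-1}y)=f\,A(0,y)$ pins down $f=\zeta^{-k}$. Therefore every monomial $x^iy^j$ occurring in $A$ satisfies $j\equiv k\pmod m$ and every monomial $x^iy^j$ occurring in $B$ satisfies $j\equiv k+1\pmod m$.

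To finish, suppose $m\ge k+2$. Then $0\le k<m$ and $0\le k+1<m$, so the congruences force $j\ge k$ in every monomial of $A$ and $j\ge k+1$ in every monomial of $B$; hence $y^{k}\mid A$ and $y^{k+1}\mid B$. Since $k\ge1$, the coordinate $y$ is a common factor of $A$ and $B$, so the zero set of $v$ contains the curve $\{y=0\}$, contradicting that $v$ has an isolated singularity at $0$. Hence $m\le k+1$, i.e.\ $|K|\le\tg(v,\F_0,0)+1$. The part needing the most care is the middle step: correctly computing the pushforward so as to identify $f=\zeta^{-k}$, and recasting ``isolated singularity of $v$'' as ``$A$ and $B$ coprime'' so that the $\Z/m$-grading of the monomials of $A$ and $B$ can be turned against it; one should also make sure the passage to the coordinates of Proposition~\ref{streg} really leaves all hypotheses on $v$ intact.
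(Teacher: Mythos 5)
Your proof is correct. It shares the paper's setup --- pass to the coordinates of Proposition \ref{streg} in which $G$ is diagonal, $\F$ is given by $\mbox{d}x$ and $K$ is generated by $(x,y)\mapsto(x,\zeta y)$, then exploit the semi-invariance $A(x,\zeta^{-1}y)=fA(x,y)$, $\zeta B(x,\zeta^{-1}y)=fB(x,y)$ together with the coprimality of $A$ and $B$ forced by the isolated singularity --- but your endgame is genuinely different and cleaner. The paper splits into the cases $y\nmid A$ and $y\mid A$, applies the Weierstrass Preparation Theorem in each, and extracts a congruence $r\equiv 0$ or $r\equiv -1 \pmod{|K|}$ for a suitably defined $r$; in the second case it writes $A=u(x,y)y^{r}$ with $u$ a unit, a normalization that does not follow from $y\mid A$ alone and is precisely where the identification $r=\tg(v,\F_0,0)$ becomes delicate. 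You instead pin down the multiplier $f=\zeta^{-k}$ directly from the lowest-order term of $A(0,y)$, read off the congruences $j\equiv k$ (for $A$) and $j\equiv k+1$ (for $B$) on all monomials, and observe that $|K|\geq k+2$ would force $y$ to divide both $A$ and $B$, contradicting the isolated singularity. This removes both the case split and the appeal to Weierstrass preparation, and it treats uniformly the situation the paper's second case handles somewhat gingerly. The auxiliary points you flag --- invariance of all hypotheses under the linearizing change of coordinates, the computation $\tg(v,\F_0,0)=\mathrm{ord}_{y}A(0,y)$, and the cyclicity of $K$ via Corollary \ref{fibloc} --- are all handled correctly.
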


\begin{proof}
By Proposition \ref{streg}, we may choose coordinates $(x,y)$ where $G$ is diagonal and $\F$ is defined by $\dx$. As in Corollary \ref{fibloc}, in these coordinates $K$ is generated by 
\[
\varphi(x,y)=(x,ly),
\]
with $l^{|K|}=1$ primitive. Let 
\[
v = A(x,y)\partial_x +B(x,y)\partial_y
\]
be the expression of $v$ in these coordinates. Also observe  that $\F_0=\{x=0\}$, hence $x$ does not divide $A$. We have two cases: whether $y$ divides $A$ or not.

Suppose that $y$ does not divide $A$. The Weierstrass Preparation Theorem implies that
\[
A(x,y) = u(x,y)\left(a_0(y)+a_1(y)x + \dots + a_{k-1}(y)x^{k-1} +x^k \right),
\]
for some holomorphic functions $u$ and $a_i$, $i=0,\dots k-1$ with $u(0)\neq0$, $k>0$. Since $v$ has a singularity at the origin, $a_0(y) = y^rb(y)$, $b(0)\neq 0$ and $r>0$. The pushforward of $v$ by $\varphi$ is
\[
\varphi_{\ast} v = A\circ\varphi^{-1}\partial_x + lB\circ\varphi^{-1}\partial_y .
\]
Hence $l^r=1$ (since $k>0$) or, equivalently, 
\[
r\equiv 0 \pmod{|K|}.
\]

Now suppose that $y$ divides $A$. Then $A(x,y) = u(x,y)y^r$ with $r>0$, $u$ holomorphic and $u(0)\neq0$. Since the singularity is isolated, $y$ does not divide $B$. By the Weierstrass Preparation Theorem,
\[
B(x,y) = q(x,y)\left(b_0(x)+b_1(x)y + \dots + b_{t-1}(x)y^{t-1} +y^t \right),
\]
for some holomorphic functions $q$ and $b_i$, $i=0,\dots t-1$ with $q(0)\neq0$. We have that $b_0(0) =0$, then some power of $x$ divides $b_0(x)$. It follows by taking the pushforward of $v$ by $\varphi$ that $l^{-r} = l$ or, equivalently,
\[
r\equiv -1 \pmod{|K|}.
\]

Either way, we have 
\[
|K|\leq r+1.
\]
It remains to show that $r$ is the tangency index:
\[
\tg(v,\F_0,0) = {\rm dim}_{\C}\frac{\mathcal{O}_{\C^2,0}}{\left< x , v(x)\right>} = {\rm dim}_{\C}\frac{\mathcal{O}_{\C^2,0}}{\left< x, A \right>} = {\rm dim}_{\C}\frac{\mathcal{O}_{\C^2,0}}{\left< x, y^r \right>} = r.
\]
\end{proof}

\subsection{Automorphisms of Singular Foliations}

Naturally, the behavior of the foliation near its singular points
also imposes many restrictions on its automorphism group. The finite groups that preserve reduced singularities are described in the
following proposition:

\begin{proposition}\label{stab}
Let $\F$ be a germ of singular foliation on $(\C^2,0)$ with reduced singularity whose eigenvalue is $\lambda$. Let $v$ be a vector field defining $\F$ and let $G$ be a finite subgroup of $\aut(v,0)$. Then there exist coordinates such that 
\[
v = (x+P(x,y))\partial_x + (\lambda y + Q(x,y))\partial_y,
\]
where $P$ and $Q$ vanish at the origin with order at least two, and one of the following holds:
\begin{enumerate}
\item $\lambda \neq -1$ and $G$ is abelian diagonal.
\item $\lambda = -1$ and $G$ has an abelian diagonal subgroup $H$ whose index is at most two.
\end{enumerate}
In the second case, if $G$ is abelian but not diagonal ($G\neq H$),  then $G$ has generators
\[
\varphi(x,y) = (lx,ly) \, {\rm and } \, \psi(x,y) = (\zeta y,\zeta x), 
\] 
where $l^n=1$, $\zeta^m=1$ and $\zeta^2=l^s$, for some integers satisfying $2n=sm$. 
\end{proposition}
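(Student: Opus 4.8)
The plan is to linearise the action and then reduce everything to elementary linear algebra on the eigenspaces of $Dv_0$. First I would apply Lemma~\ref{finlin} to the finite group $G$, obtaining coordinates in which $G\subset\gl(2,\C)$; the transformation rule for the relation $\varphi_\ast v=fv$ shows that $G$ still preserves the transformed vector field. Since the singularity is reduced, the two eigenvalues of $Dv_0$ are in ratio $\lambda\notin\Q_{>0}$, so after rescaling $v$ by a constant they are $1$ and $\lambda$, and crucially $\lambda\neq1$, so they are distinct and $Dv_0$ is diagonalisable with one--dimensional eigenspaces. Conjugating the already linear group $G$ by a linear map that diagonalises $Dv_0$, we may assume $G\subset\gl(2,\C)$ and $v=(x+P)\partial_x+(\lambda y+Q)\partial_y$ with $P,Q$ vanishing to order at least two; this is the normal form in the statement.

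Next comes the eigenvalue bookkeeping. For $\varphi\in G$, comparing linear parts in $\varphi_\ast v=fv$ (where now $\varphi$ is the matrix $D\varphi_0$) gives $\varphi\,Dv_0\,\varphi^{-1}=f\,Dv_0$, hence $\{1,\lambda\}=\{f,f\lambda\}$ as unordered pairs. Either $f=1$, or $f=\lambda$ together with $\lambda^2=1$; since $\lambda\neq1$, the second possibility forces $\lambda=-1$ and $f=-1$. Therefore, if $\lambda\neq-1$ every $\varphi\in G$ has $f=1$ and so commutes with $Dv_0=\mathrm{diag}(1,\lambda)$; as the eigenvalues are distinct, $\varphi$ is diagonal, and $G$ is abelian diagonal, which is case~(1). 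If $\lambda=-1$, the assignment $\varphi\mapsto f(\varphi)\in\{\pm1\}$ is a homomorphism, since $f(\varphi\psi)=f(\varphi)f(\psi)$ by pushing $v$ forward in two steps; its kernel $H$ has index at most two, and each element of $H$ commutes with $\mathrm{diag}(1,-1)$, hence is diagonal, giving case~(2). Moreover, an element $\psi\in G\setminus H$ satisfies $\psi\,\mathrm{diag}(1,-1)\,\psi^{-1}=\mathrm{diag}(-1,1)$, i.e. it interchanges the two eigenlines, so in these coordinates $\psi$ is an anti-diagonal linear map, $\psi(x,y)=(py,qx)$ with $pq\neq0$.

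Finally, assume $\lambda=-1$, $G$ abelian, and $G\neq H$, and fix such a $\psi$. Requiring $\psi$ to commute with an arbitrary element $(ax,by)$ of $H$ forces $a=b$; thus $H$ consists of scalar matrices, and being finite it is cyclic, $H=\langle\varphi\rangle$ with $\varphi(x,y)=(lx,ly)$ for a root of unity $l$. A diagonal rescaling $(x,y)\mapsto(tx,y)$ with $t^2=p/q$ fixes both $\varphi$ and $\mathrm{diag}(1,-1)$ and turns $\psi$ into $\psi(x,y)=(\zeta y,\zeta x)$ with $\zeta^2=pq$; since $\psi$ has finite order, $\zeta$ is a root of unity. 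Now $\psi^2(x,y)=(\zeta^2 x,\zeta^2 y)$ lies in $H=\langle\varphi\rangle$, so $\zeta^2=l^s$ for an integer $s$, and a short bookkeeping of the orders of $l$ and $\psi$ — using that $l$ and $\zeta$ may be taken as $n$-th and $m$-th roots of unity for suitably chosen $n,m$ — yields integers with $2n=sm$, as claimed. The one delicate point throughout is that reducedness is exactly what forces $Dv_0$ to have distinct eigenvalues, making ``commutes with $Dv_0$'' synonymous with ``diagonal'' (this fails for a node, $\lambda=1$); the remaining work is the computation $\{1,\lambda\}=\{f,f\lambda\}$ and routine manipulations with finite subgroups of $\gl(2,\C)$, so the main effort is organisational, keeping the three successive normalisations compatible.
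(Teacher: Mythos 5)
Your proposal is correct and follows essentially the same route as the paper: linearize $G$ via Lemma~\ref{finlin}, diagonalize $Dv_0$ using that reducedness forces distinct eigenvalues, derive $f\,Dv_0=D\varphi_0\,Dv_0\,(D\varphi_0)^{-1}$, and split into the diagonal case ($f=1$) and the anti-diagonal case ($f=\lambda=-1$), finishing with the same rescaling that normalizes $\psi$. The only differences are cosmetic — you read off the dichotomy from the spectrum of $fDv_0$ rather than from the explicit matrix entries, and you identify the index-two subgroup $H$ as the kernel of the character $\varphi\mapsto f(\varphi)$ instead of as the subgroup generated by the diagonal elements (and your $t^2=p/q$ should be $t^2=q/p$, which is immaterial).
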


\begin{proof}
According to Lemma \ref{finlin}, we may choose coordinates $(x,y)$ such that $G$ is linear. Fix an element $\varphi$ of $G$ written as
\[
\varphi (x,y) = (ax+by, cx+dy),
\]
where $ad-bc \neq 0$. By hypothesis, $Dv_0$ is not zero and the relation $\varphi_{\ast}v = fv$, $f\neq 0$, implies that
\begin{equation}\label{comm}
f Dv_0 = D\varphi_0 \cdot Dv_0 \cdot (D\varphi_0)^{-1},
\end{equation}
since $v(0) = 0$. Up to a linear change of coordinates, we may assume that $Dv_0$ is in its canonical form:
\[
\left(\begin{array}{cc}
1 & 0 \\
0 & \lambda
\end{array}\right),
\]
where $\lambda \notin \Q_{>0} $. Hence, $v$ is written as
\[
v = (x+P(x,y))\partial_x + (\lambda y + Q(x,y))\partial_y.
\]
Therefore, the equation (\ref{comm}) becomes
\[
f(ad-bc)\left(\begin{array}{cc}
1 & 0 \\
0 & \lambda
\end{array}\right) =
\left(\begin{array}{cc}
ad-\lambda bc & ab(\lambda -1) \\
cd(1-\lambda) & ad\lambda - bc
\end{array}\right).
\]
Since $\lambda \neq 1$ and $ad-bc \neq 0$, we have either $b=c=0$ or $a=d=0$. If $b=c=0$, then
\[
\left\{\begin{array}{c}
fad = ad \\
fad\lambda = ad\lambda
\end{array}\right. .
\]
Hence, $f = 1$ and 
\[
\varphi (x,y) = (ax, dy),
\] 
with $a,d \in \C^{\ast}$ roots of the unity. If $a=d=0$, then
\[
\left\{\begin{array}{c}
-fbc = -\lambda bc \\
-fbc\lambda = -bc
\end{array}\right. ,
\]
which implies that $f = \lambda = -1$ and 
\[
\varphi (x,y) = (by, cx),
\]
with $b,c\in\C^{\ast}$ roots of the unity. Consequently, if $\lambda \neq -1$ then $G$ is diagonal, proving the first case. Now, we show that, when $\lambda=-1$, $G$ has a diagonal subgroup of index at most two. 

Let $H$ be the subgroup of $G$ generated by all diagonal elements. We will show that $\left(G:H\right) \leq 2$. Suppose that $H\neq G$. Then $G$ contains an element of the form 
\[
\psi(x,y) = (by,cx).
\]
For any such element, $\psi^2\in H$. If $\rho$ is another element in $G \setminus H$, then $\rho\circ\psi \in H$. Hence $\rho \equiv \psi \pmod{H}$, which implies that 
\[
\left(G:H\right)= 2.
\]

To finish, suppose that $G$ is abelian but not diagonal, $G\neq H$. Then, let 
\[
\psi(x,y) = (by,cx)
\]
be an element of $G\setminus H$. Up to conjugating $G$ with 
\[
T(x,y) = \left(\sqrt{\frac{c}{b}}x,y\right),
\]
we may assume that $b=c$. Indeed, another element $\rho(x,y) = (py,qx)$ commutes with $\psi$ if and only if $cp = qb$. Calculating the commutator with $\psi$ we see that every element of $H$ has the form
\[
\varphi(x,y) = (lx,ly),
\]
where $l$ is a root of the unity. It follows that $H$ is isomorphic to a subgroup of $\C^{\ast}$, hence it is cyclic. Therefore, $G$ has generators
\[
\varphi(x,y) = (lx,ly) \, {\rm and } \, \psi(x,y) = (by,bx), 
\] 
where $l^n=1$, $b^m=1$ and $b^2=l^s$, for some $s$ satisfying $2n=sm$.
\end{proof}

\begin{proposition}\label{fibtg}
Let $\F$ be a germ of singular foliation on $(\C^2,0)$ defined by a germ vector field $v$ with reduced singularity whose eigenvalue is $\lambda$. Let $G$ be a finite subgroup of $\aut(v,0)$ and let $C$ be a germ of smooth $G$--invariant curve in $(\C^2,0)$ which is not $\F$--invariant. Then there exist coordinates such that $C$ is  tangent to the $y$-axis and $G$ satisfies one of the following:
\begin{enumerate}
\item $G$ is cyclic, generated by 
\[
\varphi(x,y) = (l^kx,ly),
\]
where $l$ is a root of the unity and $k = \tg(\F,C, 0) \geq 1$;
\item $G$ is abelian but not cyclic with generators
\[
\varphi(x,y) = (lx,ly) \, {\rm and } \, \psi(x,y) = (-\zeta x,\zeta y)\, {\rm or }\,(\zeta x,-\zeta y), 
\] 
where $l^n=1$, $\zeta^m=1$ and $\zeta^2=l^s$, for some integers satisfying $2n=sm$.
\end{enumerate}
In the second case, $\lambda =-1$ and $\tg(\F,C, 0)=1$.
\end{proposition}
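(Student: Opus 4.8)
The plan is to combine the classification of finite groups fixing a reduced singularity (Proposition~\ref{stab}) with a tangency--index computation in the spirit of Proposition~\ref{tgreg}. Throughout write $k=\tg(\F,C,0)$; note first that $k\ge 1$, since $0\in\sing(\F)\cap C$ forces a reduced equation $g$ of $C$ and $v(g)$ to lie in $\mathfrak m_{\C^2,0}$.

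First I apply Proposition~\ref{stab} to fix coordinates $(x,y)$ in which $v=(x+P)\partial_x+(\lambda y+Q)\partial_y$, the coordinate axes are the eigendirections of $Dv_0$ (hence the separatrix directions of $\F$ at $0$), and $G$ is either abelian diagonal (when $\lambda\neq-1$) or contains an abelian diagonal subgroup $H$ with $(G:H)\le 2$, the nontrivial coset being represented by an axis--interchanging element (possible only when $\lambda=-1$). Since $C$ is smooth and $G$--invariant, $T_0C$ is invariant under $\{D\varphi_0:\varphi\in G\}$. If this linear action is not by scalars only, its invariant lines are the two axes (when $G$ is diagonal) or the two non-axis eigenlines of the interchanging element (when $(G:H)=2$). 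In the latter case, which forces $\lambda=-1$, the substitution $(x,y)\mapsto(x+y,\,x-y)$ puts the chosen $G$--invariant line on an axis, turns the interchanging generator into $\psi(x,y)=(\zeta x,-\zeta y)$ or $(-\zeta x,\zeta y)$, and keeps the diagonal generator scalar; note $T_0C$ is then \emph{not} an eigendirection of $Dv_0$. When $G$ is diagonal non-scalar, a linear change (harmless for the diagonal form of $G$, and after possibly swapping $x\leftrightarrow y$, which replaces $\lambda$ by $1/\lambda$ and preserves the value $-1$) lets us take $T_0C$ to be the $y$--axis; and when $G$ is scalar we do the same.

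Now I split according to whether $T_0C$ is an eigendirection of $Dv_0$. If it is not --- the $(G:H)=2$ situation, and the scalar case when $T_0C$ is a non-separatrix direction --- then $Dv_0(T_0C)\not\subseteq T_0C$, so for a reduced equation $g=x-\phi(y)$ (tangent to the $y$--axis, $\operatorname{ord}_0\phi\ge 2$) the $\partial_x$--component of $v$ contributes a nonzero linear term along $C$, giving $\operatorname{ord}_y v(g)|_C=1$, i.e. $k=1$. This is conclusion~(2) (with $\lambda=-1$) in the $(G:H)=2$ case, and conclusion~(1) with $k=1$ in the scalar case, the generator being $(lx,ly)=(l^{k}x,ly)$. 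If $T_0C$ \emph{is} an eigendirection, then $G$ is diagonal, $T_0C$ is the $y$--axis, and a reduced equation of $C$ is $g=x-\phi(y)$ with $\operatorname{ord}_0\phi\ge 2$ or $\phi\equiv 0$. For $\varphi(x,y)=(l^ax,l^by)$ of order $n$ in $G$, invariance of $C$ gives $g\circ\varphi=l^a g$; combined with $\varphi_\ast v=v$ (the multiplier is $f=1$ for diagonal elements, by Proposition~\ref{stab}) and the identity $(\varphi_\ast v)(g)=(v(g\circ\varphi))\circ\varphi^{-1}$, this makes $v(g)$ a $\varphi$--eigenvector with character $l^{-a}$; hence every monomial of $v(g)$ restricted to $C$ has $y$--degree in a single residue class mod $n$. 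Since $C$ is not $\F$--invariant, $v(g)|_C\not\equiv 0$ and $k=\operatorname{ord}_y v(g)|_C$ is that residue, so feeding the corresponding congruence on $(a,b)$ into Lemma~\ref{cong} shows $G$ is cyclic with generator $(l^{k}x,ly)$. This is conclusion~(1).

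The main obstacle is the last step: turning the group--theoretic exponent produced by Lemma~\ref{cong} into the analytic invariant $k=\tg(\F,C,0)$. The bridge is that $v(g)$ inherits the relative--invariance of $g$ --- which is exactly where the hypothesis $\varphi_\ast v=fv$ with \emph{constant} $f$ (here $f=1$) is used --- so that the $y$--order of $v(g)|_C$ lands in the same residue class mod $n$ as the $x$--weight of a generator. The degenerate case $\phi\equiv 0$, where $C$ is the separatrix itself and $\varphi(C)=C$ gives no congruence, is handled the same way by reading the congruence off the $\varphi$--equivariance of the $\partial_x$--component of $v$ (whose vanishing order along the axis is again $k$). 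A secondary, purely bookkeeping, point is to carry the $\lambda=-1$ coordinate rotation through so that $\psi$ and $C$ come out in exactly the normalized forms claimed.
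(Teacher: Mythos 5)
Your proposal is correct and its skeleton matches the paper's: normalize via Proposition~\ref{stab}, split on the position of $T_0C$ relative to the eigendirections of $Dv_0$, and feed a congruence into Lemma~\ref{cong}. You diverge in two places, both defensible. First, the paper establishes abelianness of $G$ \emph{before} the case analysis by averaging $\di g$ over $G$ to build a $G$--invariant regular foliation and invoking Proposition~\ref{streg}; you instead take Proposition~\ref{stab} in its ``index $\le 2$'' form and recover abelianness from the fact that a common $G$--invariant tangent line forces the diagonal subgroup to be scalar whenever an axis-interchanging element is present (non-scalar diagonal elements fix only the axes, which the interchanging element swaps). That shortcut works, but you should spell out the scalarness deduction, and you still owe the $\psi^2\in H$ computation (or a citation of the second case of Proposition~\ref{stab}, now applicable since $G$ is abelian) to produce the relations $\zeta^2=l^s$, $2n=sm$. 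Second, where the paper extracts the congruence $a\equiv kb\pmod n$ by running Weierstrass preparation separately on $g$, on $x+P$, or on $Q$ (with a dedicated subcase for $\lambda=0$), you get it uniformly from the equivariance of $v(g)$ along $C$, which automatically identifies the exponent with $\operatorname{ord}_y v(g)|_C=\tg(\F,C,0)$ and absorbs the saddle-node subcase; this is cleaner. Two statements need tightening: $v(g)$ is not literally a $\varphi$--eigenvector --- from $g\circ\varphi=\sigma g$ with $\sigma$ a generally non-constant unit and $\varphi_\ast v=v$ one gets $v(g)\circ\varphi=\sigma\, v(g)+g\, v(\sigma)$, so only the restriction to $C$ is relatively invariant and only the lowest-order coefficient gives $l^a=l^{kb}$ (which is all you need, but ``every monomial lies in a single residue class'' is an overstatement); and in the subcase $\phi\equiv 0$ the curve $C$ is the $y$--axis but is \emph{not} a separatrix, since by hypothesis it is not $\F$--invariant --- the computation you describe there is nonetheless correct.
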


\begin{proof}
Let $\{g=0\}$ be an equation for $C$. For any $\varphi \in G$ there exists $\sigma(\varphi)\in \OO_{\C,0}^{\ast}$ such that 
\[
g \circ\varphi = \sigma(\varphi)g.
\]
Then we can construct a $G$--invariant regular foliation $\G$. Indeed, let $\G$ be given by the $1$-form
\[
\eta = \frac{1}{|G|}\sum_{\varphi\in G}\frac{1}{\chi(\varphi)}\varphi^{\ast}\di g.
\]
It follows that $\varphi^{\ast}\eta = \chi(\varphi)\eta$, hence $G< \aut(\G,0)$. Proposition \ref{streg} implies that $G$ is abelian. By Proposition \ref{stab}, we can choose coordinates $(x,y)$ such that
\[
v = (x+P(x,y))\partial_x + (\lambda y + Q(x,y))\partial_y,
\]
where $P$ and $Q$ are germs of holomorphic functions that vanish at the origin with order at least two and $G$ falls in one of the following two cases:
\begin{enumerate}
\item  $G$ is diagonal;
\item $G$ is abelian but not diagonal with generators
\[
\varphi(x,y) = (lx,ly) \, {\rm and } \, \psi(x,y) = (\zeta y,\zeta x), 
\] 
where $l^n=1$, $\zeta^m=1$ and $\zeta^2=l^s$, for some integers satisfying $2n=sm$. 
\end{enumerate}
Observe  that the second case only occurs if $\lambda=-1$.

First case: $G$ is diagonal. \\
Since the  curve $C$ is smooth and $G$--invariant, the tangent vector to $C$ at the origin is an eigenvector to every element of $G$. We have to analyze the relative position between this vector and the coordinate axes.

Suppose that $C$ is transverse to both axes. Then every element of $G$ has a two dimensional eigenspace, hence $G$ is cyclic and  generated by 
\[
\varphi(x,y) = (lx, ly).
\]
where $l$ is a root of the unity. If the linear part of $g$ is $x+uy$, then 
\[
\left.v(g)\right|_{g=0} = (1-\lambda)uy + h(y)
\]
where $h$ vanishes at the origin with order at least two. Hence,
\[
\tg(\F,C, 0) = {\rm dim}_{\C}\frac{\mathcal{O}_{\C^2,0}}{\left< g, v(g)\right>} = {\rm dim}_{\C}\frac{\mathcal{O}_{\C,0}}{\left< \left.v(g)\right|_{g=0} \right>} = 1.
\]

Now suppose that $C$ is tangent to one of the axes. Let 
\[
\varphi(x,y) = (l^ax, l^by)
\]
be an element of $G$ with $l^n=1$ primitive and $\gcd(a,b,n)=1$. Assume that $C$ is tangent to the $y$-axis, $\{x=0\}$. If $x$ does not divide $g$, then the Weierstrass Preparation Theorem implies that 
\[
g(x,y) = u(x,y)(x - y^rh(y)),
\]
where $u$ and $h$ are germs of non-vanishing functions, and $r>1$. Composing $g$ with $\varphi$ we have that $l^a = l^{rb}$, hence
\[
a \equiv rb \pmod{n}.
\]
Lemma \ref{cong} implies that $G$ is cyclic and has a generator of the form
\[
\psi(x,y) = (l^{r}x,ly).
\] 
In this case we have that $\left.v(g)\right|_{g=0} = y^r[u(0)h(0)(1-\lambda r)+\dots]$, hence
\[
\tg(\F,C, 0) = {\rm dim}_{\C}\frac{\mathcal{O}_{\C^2,0}}{\left< g, v(g)\right>} = {\rm dim}_{\C}\frac{\mathcal{O}_{\C,0}}{\left< \left.v(g)\right|_{g=0} \right>} = r.
\]
If $x$ divides $g$ ($g=u(x,y)x$ with $u(0)\neq 0$), then $x$ cannot divide $x+P$ since $C$ is not $\F$--invariant. By the Weierstrass Preparation Theorem, 
\[
x + P = c(x,y)(x-y^ks(y)),
\]
where $c$ and $s$ are germs of non-vanishing functions and $k>1$. Taking the pushforward of $v$ by $\varphi$ we have that $l^a = l^{kb}$, hence
\[
a \equiv kb \pmod{n}.
\]
Again by Lemma \ref{cong}, $G$ is cyclic and has a generator of the form
\[
\psi(x,y) = (l^{k}x,ly).
\]
We have that 
\[
\tg(\F,C, 0) = {\rm dim}_{\C}\frac{\mathcal{O}_{\C^2,0}}{\left< g, v(g)\right>} = {\rm dim}_{\C}\frac{\mathcal{O}_{\C^2,0}}{\left< x, y^k\right>} = k.
\]

Now assume that $C$ is tangent to $\{y=0\}$. If $\lambda\neq0$, then the argument is the same as above. Indeed, we may divide $v$ by $\lambda$ and exchange $x$ with $y$. Hence we have to consider only the case where $\lambda=0$. If $y$ does not divide $g$, then the Weierstrass Preparation Theorem implies that 
\[
g(x,y) = u(x,y)(y - x^rh(x)),
\]
where $u$ and $h$ are germs of non-vanishing functions, and $r>1$. Composing $g$ with $\varphi$ we have that $l^{ra} = l^b$, hence
\[
ar \equiv b \pmod{n}.
\]
Lemma \ref{cong} implies that $G$ is cyclic and has a generator of the form
\[
\psi(x,y) = (lx,l^{r}y).
\] 
In this case we have that $\left.v(g)\right|_{g=0} = x^r[u(0)r+\dots]$, hence
\[
\tg(\F,C, 0) = {\rm dim}_{\C}\frac{\mathcal{O}_{\C^2,0}}{\left< g, v(g)\right>} = {\rm dim}_{\C}\frac{\mathcal{O}_{\C,0}}{\left< \left.v(g)\right|_{g=0} \right>} = r.
\]
If $y$ divides $g$ ($g=u(x,y)y$ with $u(0)\neq 0$) then $y$ cannot divide $Q$. By the Weierstrass Preparation Theorem we have
\[
Q= p(x,y)(x^m - q(x,y)),
\]
where $p$ and $q$ are germs or holomorphic functions, $m>1$, $p(0)\neq0$ and $q$ is a polynomial in $x$ that vanishes at the origin. The relation $\varphi_{\ast}v = fv$ implies that
\[
f[(x+P)\partial_x + Q(x,y)\partial_y] = (l^{-a}x+P\circ\varphi^{-1})l^a\partial_x + p\circ\varphi^{-1}( l^{-ma} x^m - q\circ\varphi^{-1})l^b\partial_y.
\]
Hence $ma \equiv b \pmod{n}$. By Lemma \ref{cong}, $G$ is cyclic and generated by 
\[
\psi(x,y) = (lx,l^my).
\] 
We have that
\[
\left.v(g)\right|_{\{g=0\}} = x^m ( p(0)u(0)+\dots).
\]
hence $\tg(\F,C, 0)=m$. 

Summarizing, for every subcase above we have coordinates $(x,y)$ where $G$ is cyclic, generated by 
\[
\varphi(x,y) = (l^kx,ly),
\]
where $l$ is a root of the unity and $k = \tg(\F,C, 0)\geq 1$. This proves our first case.

Second case: $G$ is abelian but not diagonal with generators
\[
\varphi(x,y) = (lx,ly) \, {\rm and } \, \psi(x,y) = (\zeta y,\zeta x), 
\] 
where $l^n=1$, $\zeta^m=1$ and $\zeta^2=l^s$, for some integers satisfying $2n=sm$. \\
Observe that in this case $\lambda = -1$. Since $C$ is $\psi$--invariant, $g$ is tangent to $\{x+y=0\}$ or $\{x-y=0\}$. Hence 
\[
\tg(\F,C,0)=1.
\] 
After a linear change of coordinates we have that $C$ is tangent to $\{x=0\}$ and $G$ is generated by $\varphi$ and 
\[
 \psi(x,y) = (\zeta x,-\zeta y), \, {\rm or } \,  \psi(x,y) = (-\zeta x,\zeta y).
\]
This concludes our second case.
\end{proof}

\section{Bounds for Foliations on the Projective Plane}

Foliations on the projective plane $\PP^2$ can be described by means of the Euler sequence
\[
0 \longrightarrow \OO_{\PP^2}(-1) \longrightarrow
\OO_{\PP^2}^{\oplus 3} \longrightarrow T\PP^2 \otimes
\OO_{\PP^2}(-1) \longrightarrow 0
\]
induced by the natural inclusion of the tautological line bundle
$\opn(-1)$ in the trivial bundle of rank $3$. Dualizing this
sequence we get
\[
0 \longrightarrow \Omega_{\PP^2}^1\otimes \OO_{\PP^2}(1)
\longrightarrow \OO_{\PP^2}^{\oplus 3} \longrightarrow
\OO_{\PP^2}(1) \longrightarrow 0, 
\]
where the last map is the contraction with the radial vector field
\[
R \ceq X\partial_X + Y\partial_Y + Z\partial_Z. 
\]

Let $\F$ be a foliation on $\PP^2$ defined by a global section of
$T\PP^2 \otimes \OO_{\PP^2}(k)$, for some $k\in \mathbb{Z}$. Then
$K_{\F} = \OO_{\PP^2}(k)$. The degree $d$ of $\F$ is defined by the
tangency number that $\F$ has with a general line $L$. That is, 
\[
d =\tg(\F,L) = K_{\F}\cdot L + L^2 = k+1.
\]
Therefore, a degree $d$ foliation $\F$ has canonical bundle isomorphic
to $\OO_{\PP^2}(d-1)$. The normal bundle is determined by
\[
N_{\F} = K_{\F}\otimes K_{\PP^2}^{\vee} = \OO_{\PP^2}(d+2).
\]
Tensorizing the Euler sequence by  $\OO_{\PP^2}(d)$ and taking long exact sequence of cohomology, we obtain 
\[
0 \longrightarrow \HH^0(\PP^2, \OO_{\PP^2}(d-1) ) \longrightarrow
\HH^0(\PP^2, \OO_{\PP^2}(d))^{\oplus 3} \longrightarrow
\HH^0(\PP^2, T\PP^2 \otimes \OO_{\PP^2}(d-1) ) \longrightarrow 0
\]
since $\HH^1(\PP^2, \OO_{\PP^2}(d-1) )=0$. This implies that a
holomorphic global section of $T\PP^2 \otimes \OO_{\PP^2}(d-1)$ can be 
identified with a polynomial vector field in $\C^3$
\[
v = A\partial_X + B\partial_Y + C\partial_Z,
\]
where $A$, $B$ and $C$ are homogeneous of degree $d$, and $v+ GR$
defines the same section for any homogeneous polynomial $G$ of
degree $d-1$.

Recall that the divergent of a vector field $v$ is defined in this
case by 
\[
{\rm div}(v) = \pder{A}{X}  + \pder{B}{Y} + \pder{C}{Z} = \mbox{d}(\iota_v\mu),
\]
where $\mu = \dX\wedge \dY \wedge \dZ$ and  $\iota_v$ is the contraction
with $v$ and $d$ is the differential operator. For $v+ GR$, a direct
calculation shows that
\[
{\rm div }(v + GR) = {\rm div }(v) + (d+2)G.
\]
Hence, any foliation can be represented by a vector field
with null divergent, just put $G = -{\rm div }(v)/(d+2)$. The
interaction of automorphisms and the divergent vector fields in
$\C^3$ leads to an interesting fact.

\begin{lemma}\label{div}
Let $\F$ be a foliation on $\PP^2$ given by a vector field $v$. If $F$ is an element of $\aut(\F) < \aut(\PP^2)$, then
\[
{\rm div}(F^{-1}_{\ast}v) = F^{\ast}{\rm div}(v).
\]
Moreover, if ${\rm div}(v)=0$ then $F_{\ast}v = \lambda v$ for some $\lambda \in \C^{\ast}$.
\end{lemma}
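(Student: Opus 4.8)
The plan is to pass to the linear lift. Since $\aut(\PP^2)=\pgl(3,\C)$, the automorphism $F$ lifts to some $\widetilde{F}\in\gl(3,\C)$, unique up to a nonzero scalar, and $F^{-1}_{\ast}v$ is represented by the polynomial vector field $x\mapsto \widetilde{F}^{-1}v(\widetilde{F}x)$ on $\C^3$. Replacing $\widetilde{F}$ by $c\widetilde{F}$ multiplies this vector field by $c^{d-1}$ (since $A,B,C$ are homogeneous of degree $d$), and it multiplies $F^{\ast}{\rm div}(v)={\rm div}(v)\circ\widetilde{F}$ by the same factor (since ${\rm div}(v)$ is homogeneous of degree $d-1$), so the identity to be proved does not depend on the choice of lift. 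Everything below is carried out at the level of these polynomial representatives.

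For the first identity I would simply run the chain rule. Writing $\widetilde{F}=(f_{ij})$, $\widetilde{F}^{-1}=(g_{ij})$ and $w=F^{-1}_{\ast}v$, so that $w_i(x)=\sum_k g_{ik}v_k(\widetilde{F}x)$, differentiation and summation over $i$ give
\[
{\rm div}(w)=\sum_{i}\partial_{x_i}w_i=\sum_{i,k,l}g_{ik}f_{li}\,(\partial_l v_k)(\widetilde{F}x)=\sum_{k,l}(\widetilde{F}\widetilde{F}^{-1})_{lk}\,(\partial_l v_k)(\widetilde{F}x)=\sum_{k}(\partial_k v_k)(\widetilde{F}x),
\]
which is precisely ${\rm div}(v)\circ\widetilde{F}=F^{\ast}{\rm div}(v)$. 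Conceptually this is just the statement that a linear map sends the standard volume form to a constant multiple of itself, so the divergence transforms under linear changes of coordinates simply by pullback of functions; the second-derivative error terms that would appear for a general diffeomorphism vanish here because $\widetilde{F}$ is linear. The same computation with $\widetilde{F}$ in place of $\widetilde{F}^{-1}$ yields ${\rm div}(F_{\ast}v)={\rm div}(v)\circ\widetilde{F}^{-1}$.

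For the second statement, use that $F$ preserves $\F$, so $F_{\ast}v$ also defines $\F$. Two polynomial vector fields of degree $d$ on $\C^3$ inducing the same foliation of $\PP^2$ differ by $\lambda v+GR$ with $\lambda\in\C^{\ast}$ and $G$ homogeneous of degree $d-1$ (this is the uniqueness remark for vector fields from Section 2 combined with the kernel of the Euler sequence), so we may write
\[
F_{\ast}v=\lambda v+GR.
\]
Taking divergences: the left-hand side equals ${\rm div}(v)\circ\widetilde{F}^{-1}=0$ by the hypothesis ${\rm div}(v)=0$ and the computation just made, while the right-hand side equals $\lambda\,{\rm div}(v)+(d+2)G=(d+2)G$ by the identity ${\rm div}(v+GR)={\rm div}(v)+(d+2)G$ recalled before the lemma. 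Hence $(d+2)G=0$, and since $d\ge 0$ this forces $G=0$, i.e.\ $F_{\ast}v=\lambda v$.

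The only point that requires a little care — the main obstacle, such as it is — is the dictionary between automorphisms of $\F$ on $\PP^2$ and linear algebra on $\C^3$: one must check that $F\in\aut(\F)$ genuinely forces $\widetilde{F}_{\ast}v\equiv\lambda v\pmod{GR}$ (and not merely that $\widetilde{F}_{\ast}v$ and $v$ cut out the same foliation up to something weaker), and that the pushforward of a polynomial representative represents the pushed-forward section of $T\PP^2\otimes\OO_{\PP^2}(d-1)$. Once that bookkeeping is in place, the lemma is the chain rule together with Euler's relation.
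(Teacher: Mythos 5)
Your proof is correct and follows essentially the same route as the paper: the first identity is the statement that divergence transforms by pullback under linear maps (the paper derives it in one line from $F^{\ast}\mu=\det(F)\mu$ and $d\circ F^{\ast}=F^{\ast}\circ d$, whereas you unwind the same fact via the chain rule in matrix entries), and the second part is identical — write $F_{\ast}v=\lambda v+GR$, apply ${\rm div}$, and use ${\rm div}(v+GR)={\rm div}(v)+(d+2)G$ to kill $G$. Your extra remarks on the scalar ambiguity of the lift and on why $F_{\ast}v\equiv\lambda v\pmod{GR}$ are sound bookkeeping that the paper leaves implicit.
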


\begin{proof}
 Since  $F^{\ast}\mu = {\rm det}(F)\mu$, we conclude that 
\begin{align*}
{\rm div}(F^{-1}_{\ast}v)\mu & = d(\iota_{F^{-1}_{\ast}v}\mu) =\frac{1}{{\rm det}(F)} d(\iota_{F^{-1}_{\ast}v}F^{\ast}\mu)= \frac{1}{{\rm det}(F)} d(F^{\ast}(\iota_v\mu)) \\
& = \frac{1}{{\rm det}(F)}F^{\ast}({\rm div}(v)\mu) = (F^{\ast}{\rm div}(v))\mu .
\end{align*}
We have that $F\in \aut(\F)$ means that $F_{\ast}v$ defines $\F$. Then $F_{\ast}v = \lambda v + GR$ with $\lambda \in \C^{\ast}$ and $G$ a homogeneous polynomial. If ${\rm div}(v) = 0$, then
\[
0 = {\rm div }(F_{\ast}v) = {\rm div }(\lambda v + GR) = (d+2)G,
\]
which implies that $G=0$.
\end{proof}

The foliation $\F$ can also be defined by a global section of
$\Omega_{\PP^2}^1\otimes \OO_{\PP^2}(d+2)$. By means of the dual of
the Euler sequence and Bott formulae we achieve the following
sequence$\colon$
\[
0 \longrightarrow \HH^0(\PP^2, \Omega_{\PP^2}^1\otimes
\OO_{\PP^2}(d+2) ) \longrightarrow \HH^0(\PP^2,
\OO_{\PP^2}(d+1))^{\oplus 3} \longrightarrow \HH^0(\PP^2,
\OO_{\PP^2}(d+2) ) \longrightarrow 0
\]
Then a global section of $\Omega_{\PP^2}^1\otimes \OO_{\PP^2}(d+2)$
is identified with a polynomial $1$-form
\[
\omega = A\dX + B\dY + C\dZ
\]
in $\C^3$ where $A$, $B$ and $C$ are homogeneous of degree $d+1$,
such that the contraction with $R$ is zero, that is, $XA+YB+ZC =0$.

\subsection{Finite Subgroups of $\aut(\PP^2)$}

We are interested in foliations with finite automorphism groups. Fortunately, the finite subgroups of $\aut(\PP^2) = \pgl(3,\C)$ have been classified by Blichfeldt, see \cite{BLI}.

\begin{theorem}[Blichfeldt]\label{bli}
Any finite subgroup of $\pgl(3,\C)$ is conjugated to one of the following.

Intransitive imprimitive groups:
\begin{itemize}
\item[(A)] An abelian group generated by diagonal matrices;
\item[(B)] A finite subgroup of ${\rm GL}(2,\C)$.
\end{itemize}

Transitive imprimitive groups:
\begin{itemize}
\item[(C)] A group generated by an abelian group and
\[
T(X:Y:Z) = (Y:Z:X);
\]
\item[(D)] A group generated by a group of type (C) and a transformation 
\[ 
R(X:Y:Z)=(aX:bZ:cY). 
\]
\end{itemize}

Primitive groups having a non-trivial normal subgroup:
\begin{itemize}
\item[(E)] The group of order $36$ generated by $S(X:Y:Z) = (X: \lambda Y: \lambda^2 Z )$, $T$ and 
\[
V(X:Y:Z) = (X+Y+Z:X+ \lambda Y+ \lambda^2 Z : X+ \lambda^2 Y+ \lambda Z),
\] 
where $\lambda$ is a primitive cubic root of $1$;
\item[(F)] The group of order $72$ generated by $S$, $T$, $V$ and $UVU^{-1}$ where 
\[
U(X:Y:Z)=(X:Y:\lambda Z);
\]
\item[(G)] The Hessian group of order $216$ generated by $S$, $T$, $V$, and $U$. 
\end{itemize}

Simple groups:
\begin{itemize}
\item[(H)] The icosahedral group isomorphic to $\mathfrak{A}_5$ of order $60$ generated by $T$ and the transformations $R_1(X:Y:Z)=(X:-Y:-Z)$ and 
\[
R_2 (X:Y:Z) = (X+\varphi Y -\varphi^{-1}Z: \varphi X -\varphi^{-1}Y +Z: -\varphi^{-1}X + Y + \varphi Z )
\]
where $\varphi = \frac{1}{2}(1+\sqrt{5})$.
\item[(I)] The Valentiner group isomorphic to $\mathfrak{A}_6$ of order $360$ generated by $T$, $R_1$, $R_2$ and $S$.
\item[(J)] The Klein group isomorphic to ${\rm PSL}(2, \mathbb{F}_7)$ of order $168$ generated by $T$ and the transformations $R^{\prime}(X:Y:Z) = (X:\beta Y: \beta^3 Z)$ and 
\[
S^{\prime}(X:Y:Z) = (aX+bY+cZ:bX+cY+aZ:cX+aY+bZ)
\]
where $\beta^7 = 1$, $a=\beta^4-\beta^3$, $b=\beta^2-\beta^5$ and $c=\beta - \beta^6$. 
\end{itemize}
\end{theorem}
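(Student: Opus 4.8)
The plan is to organize the proof around the classical trichotomy for a finite linear group in dimension three. First I would reduce the projective statement to a linear one: given a finite $G<\pgl(3,\C)$, its preimage $\widetilde G$ in $\mathrm{SL}(3,\C)$ is finite, since $\C^{\ast}\cap\mathrm{SL}(3,\C)=\mu_3$, and fits into a central extension $1\to\mu_3\to\widetilde G\to G\to1$; thus it suffices to classify finite subgroups $\widetilde G<\mathrm{SL}(3,\C)$ and read off $G=\widetilde G/(\widetilde G\cap\mu_3)$. By Maschke's theorem the tautological $3$-dimensional representation of $\widetilde G$ is semisimple, so exactly one of the following holds: it is reducible (intransitive); it is irreducible but $\C^3$ decomposes as a sum of three lines transitively permuted by $\widetilde G$ (transitive imprimitive); or it is primitive.

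Next I would dispatch the imprimitive families, which is essentially linear algebra together with elementary Clifford theory. If the representation is reducible, a $\widetilde G$-stable splitting is $1+1+1$, giving $G$ abelian diagonal (type (A)), or $1+2$, so that $G$ fixes a point and a line and is therefore conjugate into a finite subgroup of $\gl(2,\C)$ acting on the invariant plane (type (B)). If the representation is transitive imprimitive, the action on the three lines gives a surjection onto a transitive subgroup of $\mathfrak{S}_3$, namely $\Z/3$ or $\mathfrak{S}_3$; the kernel is diagonal, hence abelian, and lifting a $3$-cycle, respectively adjoining a transposition, produces exactly the transformation $T$ and, in the larger case, a transformation $R$ of the stated shape, giving types (C) and (D).

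The substance is the primitive case, split according to whether $G$ has a nontrivial normal abelian subgroup $\bar A$. Suppose it does and let $A\triangleleft\widetilde G$ be its preimage, so that $\bar A=A/Z(A)$ by Schur's lemma. Since $\widetilde G$ is irreducible while an abelian normal subgroup would split $\C^3$ into permuted lines (contradicting primitivity unless $\bar A$ is trivial), $A$ must be non-abelian with $A/Z(A)\cong\bar A$ abelian; comparing representation dimensions forces $[A:Z(A)]=9$, $A/Z(A)\cong(\Z/3)^2$, and the commutator pairing on $A/Z(A)$ to be a nondegenerate alternating form valued in $\mu_3$. Hence $A$ is a central extension of the Heisenberg group of order $27$ and $\bar A\cong(\Z/3)^2$, self-centralizing in $G$. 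Because conjugation in $\mathrm{GL}(3,\C)$ fixes scalars, the induced action of $G/\bar A$ on $\bar A$ preserves this symplectic structure, so $G/\bar A\hookrightarrow\mathrm{SL}(2,\mathbb{F}_3)$. Primitivity means the image of $G/\bar A$ in $\mathrm{SL}(2,\mathbb{F}_3)/\{\pm1\}\cong\mathfrak{A}_4$, which acts on the four $\bar A$-invariant triangles in $\pd$, fixes none of those four points; running through the subgroups of $\mathfrak{A}_4$ leaves $|G/\bar A|\in\{4,8,24\}$, so $|G|\in\{36,72,216\}$, and one checks these are realized precisely by the groups generated by $S,T,V$ (respectively together with $UVU^{-1}$, respectively with $U$): types (E), (F), (G), the last being the Hessian group.

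If a primitive $G$ has no nontrivial normal abelian subgroup, it is non-solvable; a minimal normal subgroup $N$ is a product $S^{k}$ of isomorphic non-abelian simple groups with $C_G(N)=1$, and $N$ embeds in $\pgl(3,\C)$, which for dimension reasons forces $k=1$, so $N$ is simple and $N\le G\le\mathrm{Aut}(N)$. The remaining, and genuinely hard, point is that the only non-abelian simple groups admitting a primitive embedding into $\pgl(3,\C)$ are $\mathfrak{A}_5$, $\mathfrak{A}_6$ and $\mathrm{PSL}(2,\mathbb{F}_7)$. I would establish this by observing that such a group has a faithful irreducible $3$-dimensional representation of a Schur cover ($\mathfrak{A}_5$ itself already has $3$-dimensional irreducibles; $3.\mathfrak{A}_6$ gives the Valentiner embedding; $\mathrm{PSL}(2,\mathbb{F}_7)\cong\mathrm{GL}(3,\mathbb{F}_2)$ has two complex-conjugate $3$-dimensional irreducibles) and then invoking the classification of finite groups with a complex irreducible representation of degree at most $3$; this last fact can itself be proved in Blichfeldt's elementary spirit by bounding $|G|$ by classical means and analyzing the conjugacy classes of elements of $\pgl(3,\C)$ through their fixed loci in $\pd$ — an element of finite order fixes either three points or a point and a line pointwise — together with the class equation and the orbits on invariant curves. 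The main obstacle, as always in such classifications, is the bookkeeping of this last step: excluding all spurious configurations of element orders and checking that each abstract group occurs in $\pgl(3,\C)$ in a single conjugacy class, realized by the explicit generators $T,R_1,R_2$ (and $S$), respectively $T,R',S'$, recorded in the statement.
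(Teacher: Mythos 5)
The paper does not actually prove this statement: it is Blichfeldt's classical classification, quoted with a citation to \cite{BLI} (and, for the refinement of the imprimitive cases, to \cite{DI}), so there is no in-paper argument to compare yours against. Your outline follows the standard strategy of the classical proof: lift to $\mathrm{SL}(3,\C)$, apply the reducible/imprimitive/primitive trichotomy, dispose of the first two cases by linear algebra and Clifford theory, and split the primitive case according to the existence of a nontrivial normal abelian subgroup, where the symplectic structure on $(\Z/3\Z)^2$ correctly isolates the orders $36$, $72$ and $216$. Up to that point the reasoning is sound.

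The genuine gap is in the last case. To conclude that the only primitive groups with no nontrivial proper normal subgroup are $\mathfrak{A}_5$, $\mathfrak{A}_6$ and $\mathrm{PSL}(2,\mathbb{F}_7)$, you invoke ``the classification of finite groups with a complex irreducible representation of degree at most $3$'' --- but that classification \emph{is} the theorem being proved, in its linear form, so as written the argument is circular. The substance of Blichfeldt's proof is precisely an a priori bound on the order of a primitive subgroup of $\mathrm{PGL}(3,\C)$ (obtained from his analysis of the eigenvalue configurations and fixed loci of elements of finite order, or nowadays from an effective Jordan-type theorem), followed by the elimination of every other candidate simple group ($\mathfrak{A}_7$, $\mathrm{PSL}(2,8)$, $\mathrm{PSL}(2,11)$, \dots) and of every proper overgroup $N<G\leq \mathrm{Aut}(N)$ (e.g.\ $\mathcal{S}_5$, $\mathrm{PGL}(2,7)$, the index-two subgroups of $\mathrm{Aut}(\mathfrak{A}_6)$, none of which admits a faithful $3$-dimensional projective representation because the relevant outer automorphisms permute the degree-$3$ characters). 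You relegate all of this to ``bookkeeping'', but it is the mathematical core of the theorem and is not carried out; the proposal is therefore an accurate road map rather than a proof.
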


Later Dolgachev and Iskovskikh refined the classification of the transitive
imprimitive groups, the groups of type (C) and (D), see \cite{DI}. If $G$ is a
group of one of these types, it has an abelian normal subgroup $H$
such that $G/H$ permutes transitively the three $H$--invariant
lines. It turns out that $G/H$ is isomorphic to either $\Z/3\Z$ or
the symmetric group $\mathcal{S}_3$ on three elements. They have
proved the following:

\begin{theorem}[Dolgachev--Iskovskikh]\label{trimp}
Let $G$ be a transitive imprimitive subgroup of $\pgl(3,\C)$. Then, $G$ is conjugate to  one of the following groups:
\begin{itemize}
\item[(C1)] $G \simeq (\Z/n\Z)^2 \rtimes (\Z/3\Z)$ generated by
\begin{align*}
S_1(X:Y:Z) & = (lX: Y: Z ),\\
S_2(X:Y:Z) & = (X: lY: Z ),\\
 T(X:Y:Z) & = (Y:Z:X) ,
\end{align*}
where $l$ is a primitive $n$--th root of the unity.
\item[(D1)] $G \simeq (\Z/n\Z)^2 \rtimes \mathcal{S}_3$ generated by $S_1$, $S_2$, $T$ and
\[
R(X:Y:Z) = (Y:X:Z).
\]
\item[(C2)] $G \simeq (\Z/n\Z \times \Z/m\Z) \rtimes (\Z/3\Z)$ generated by 
\begin{align*}
S_3(X:Y:Z) & = (l^kX: Y: Z ),\\
S_4(X:Y:Z) & = (l^sX: lY: Z ),\\
 T(X:Y:Z) & = (Y:Z:X) ,
\end{align*}
where $k>1$, $n=mk$ and $s^2-s+1 \equiv 0 \pmod{k}$.
\item[(D2)] $G \simeq (\Z/n\Z \times \Z/m\Z)\rtimes \mathcal{S}_3$ generated by $S_3$, $S_4$, $T$ and $R$, where $k=3$ and $s=2$.
\end{itemize}
\end{theorem}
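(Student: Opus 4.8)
The plan is to use the imprimitive structure to confine $G$ inside the normalizer of a maximal torus, and then to classify the possibilities by arithmetic over the Eisenstein integers. First I would record the structure: since $G$ is irreducible and imprimitive, $\C^3=L_1\oplus L_2\oplus L_3$ with $G$ permuting the lines $L_i$, and transitivity forces the homomorphism $G\to\mathcal{S}_3$ to have transitive image; writing $H$ for its kernel, $G/H\simeq\Z/3\Z$ or $\mathcal{S}_3$. Choosing the $L_i$ to be the coordinate axes makes $H$ a finite abelian subgroup of the diagonal torus $\mathbb{T}\simeq(\C^{\ast})^2$ of $\pgl(3,\C)$, so that $G$ is an extension of $\Z/3\Z$ or $\mathcal{S}_3$ by an abelian group. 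A lift $\tau\in G$ of the $3$--cycle $(1\,2\,3)$ is a monomial matrix whose underlying permutation is that $3$--cycle; conjugating by a diagonal matrix (which keeps $H$ diagonal) and rescaling in $\pgl(3,\C)$, I would normalize $\tau$ to $T(X:Y:Z)=(Y:Z:X)$, with $T^3=1$. Thus $H$ is $T$--invariant and $G=H\rtimes\langle T\rangle$ in the case $G/H\simeq\Z/3\Z$.

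Next I would classify the $T$--invariant finite subgroups $H\le\mathbb{T}$. Passing to the character lattice $L=\{(a,b,c)\in\Z^3\mid a+b+c=0\}\simeq\Z^2$ of $\mathbb{T}$, a finite $H$ corresponds to the finite--index sublattice $M=\{\chi\in L\mid\chi|_H=1\}$ with $\widehat H\simeq L/M$, and $H$ is $T$--invariant precisely when $M$ is stable under the cyclic--permutation automorphism $\sigma$ of $L$. One computes $\sigma^2+\sigma+1=0$ on $L$, so $L$ is a rank--one module over the ring of Eisenstein integers $\Z[\zeta]$, $\zeta=e^{2\pi i/3}$, which is a principal ideal domain; hence the $\sigma$--stable finite--index sublattices of $L$ are exactly the nonzero ideals $(\alpha)\subset\Z[\zeta]$. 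Writing $L/M\simeq\Z/m\Z\times\Z/n\Z$ in elementary divisors with $m\mid n$, $k=n/m$, the $T$--invariance of $H$ translates into the congruence $s^2-s+1\equiv 0\pmod{k}$ for the scalar $s$ by which $\sigma$ acts on the cyclic quotient $\Z/k\Z$. Reading off diagonal generators, the case $k=1$ gives $H\simeq(\Z/n\Z)^2$ and exactly the group (C1), while $k>1$ gives (C2) with the stated relation; this settles the case $G/H\simeq\Z/3\Z$.

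For $G/H\simeq\mathcal{S}_3$ I would adjoin a lift $R$ of a transposition; after multiplying $R$ by an element of $H$ and rescaling, one takes $R(X:Y:Z)=(Y:X:Z)$, so $R^2=1$ and $RTR^{-1}=T^{-1}$, and now $M$ must also be stable under the induced involution $\rho$ of $L$, which satisfies $\rho\sigma\rho=\sigma^{-1}$ and $\det\rho=-1$, i.e.\ is complex conjugation on $\Z[\zeta]$ up to units. Requiring the ideal $(\alpha)$ to be $\rho$--stable forces $k\mid 3$: either $k=1$, giving (C1) and, with $R$ adjoined, (D1) for every $n$; or $k=3$, in which case $s=2$ is the only compatible scalar, giving (C2) with $k=3$, $s=2$ and, with $R$, the group (D2). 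To finish I would check the converse, namely that each of the four listed groups is transitive imprimitive, so that the list is exhaustive.

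The step I expect to be the main obstacle is this classification of the $T$--invariant subgroups $H$: one has to carry out the descent from $\gl(3,\C)$ to $\pgl(3,\C)$ carefully, so that the scalar ambiguity is absorbed and $\tau$, $R$ become literally the stated permutations while $H$ stays diagonal; one must then extract the $\Z[\zeta]$--module structure precisely enough to obtain the congruence in the exact form $s^2-s+1\equiv 0\pmod{k}$ together with the correct normal form of the generators $S_3,S_4$; and finally, in the $\mathcal{S}_3$ case, show that $\rho$--stability of the Eisenstein ideal admits no possibility other than $k=3$, $s=2$.
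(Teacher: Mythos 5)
The paper does not prove this statement: it is quoted from Dolgachev--Iskovskikh \cite{DI}, so there is no internal argument to compare yours against. On its own terms your strategy is the right one and is essentially a clean repackaging of the standard proof: put the three permuted lines at the coordinate axes, so that $H=\ker(G\to\mathcal{S}_3)$ lies in the diagonal torus, normalize a lift of the $3$-cycle to $T$ (which works because such a lift automatically has order $3$ in $\pgl(3,\C)$ and diagonal conjugation preserves $H$), and classify the $T$-stable finite diagonal subgroups through the character lattice $L$, on which $\sigma^2+\sigma+1=0$, viewed as the free rank-one module $\Z[\zeta]$. The identification of $\sigma$-stable finite-index sublattices with ideals $(m\beta)$, $\beta$ primitive, does give $H\simeq\Z/m\Z\times\Z/n\Z$ with $k=n/m=N(\beta)$ and the congruence $s^2-s+1\equiv 0\pmod{k}$ (your $s$ is the negative of the multiplier by which $\zeta$ acts on $\Z[\zeta]/(\beta)\simeq\Z/k\Z$, which accounts for the sign), and in the $\mathcal{S}_3$ case the requirement $(\bar\beta)=(\beta)$ with $\beta$ primitive forces $\beta$ to be a unit or an associate of $1-\zeta$, hence $k\in\{1,3\}$ and $s=2$ when $k=3$. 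All of this is correct.

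The one step that would not survive scrutiny as written is the normalization of $R$. Once $T$ has been conjugated to the literal permutation $(X:Y:Z)\mapsto(Y:Z:X)$, the only diagonal conjugations that still preserve $T$ in $\pgl(3,\C)$ are by $\mathrm{diag}(1,\mu,\mu^{2})$ with $\mu^{3}=1$; a lift $R_{0}=\mathrm{diag}(a,b,1)Q$ of the transposition satisfies $R_{0}^{2}=\mathrm{diag}(ab,ab,1)\in H$ but need not be an involution, and replacing $R_{0}$ by $hR_{0}$ changes $ab$ into $h_{1}h_{2}h_{3}^{-2}\,ab$. So ``multiplying $R$ by an element of $H$ and rescaling'' only succeeds if $(ab)^{-1}$ lies in the image of the map $h\mapsto h_{1}h_{2}h_{3}^{-2}$ on $H$; in other words you must prove that the extension $1\to H\to G\to\mathcal{S}_3\to 1$ splits by genuine permutation matrices. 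This is true, but it is not automatic: it amounts to a cocycle computation using the relations $R_{0}^{2}\in H$ and $R_{0}TR_{0}^{-1}\in HT^{-1}$ together with the explicit structure of $H$ you have just derived, and it has to be supplied. The other deferred step --- unwinding the duality $H\simeq\mathrm{Hom}(L/M,\C^{\ast})$ and choosing a basis adapted to the elementary divisors so as to land exactly on the generators $S_{3},S_{4}$ with the stated congruence --- is routine but is precisely the content of the normal form, so it too must be written out before the argument is complete. Finally, note that verifying the converse (that each listed group is transitive imprimitive) does not make the list exhaustive; exhaustiveness is carried entirely by the forward analysis above.
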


Observe that groups of type $(A)$ or $(B)$ fix the point $(1:0:0)$ and all the others do not fix any point but have a subgroup of type $(C)$. In particular, they contain the element $T$. Then we have restrictions on foliations with automorphism groups without fixed points, as we establish in the following lemma:

\begin{lemma}\label{fixT}
Let $\F$ be a foliation on $\PP^2$ of degree $d\geq 0$ and let $G<
\aut(\F)$ be a finite subgroup. Then one of the following holds:
\begin{enumerate}
\item $G$ has a fixed point; 
\item Up to linear change of coordinates, $\aut(\F)$ contains
\[
T(X: Y: Z) = (Y: Z: X)
\]
and $\F$ is defined by a vector field of the form
\[
v = A(X,Y,Z)\partial_X + \lambda^2 A(Y,Z,X)\partial_Y + \lambda
A(Z,X,Y)\partial_Z,
\]
where $\lambda^3 = 1$ and $A$ is a homogeneous polynomial of degree
$d$.
\end{enumerate}
Moreover, if $\aut(\F)$ contains both $T$ and $R(X\colon Y\colon Z) = (Y\colon X\colon Z)$,
then $\lambda = 1$ and $A\circ T \circ R = \pm A$.
\end{lemma}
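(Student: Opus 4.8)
The plan is to analyze how the symmetries $T$ and $R$ act on the space of vector fields defining $\F$, exploiting the divergence-free normalization afforded by Lemma \ref{div}. First I would dispense with case (1): the finite subgroups of $\pgl(3,\C)$ that fix a point are precisely those of type (A) or (B) in Blichfeldt's list, so if $G$ fixes no point then, by inspection of the remaining types (C)--(J), $G$ contains a conjugate of $T$; after a linear change of coordinates we may assume $T\in\aut(\F)$. This is the easy structural part, and it only uses the classification already quoted.

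Next, the core of the argument is the computation with $T$. Choose a polynomial vector field $v = A_1\partial_X + A_2\partial_Y + A_3\partial_Z$ (components homogeneous of degree $d$) defining $\F$, normalized so that $\mathrm{div}(v)=0$; this is possible by the discussion preceding Lemma \ref{div}. Since $T\in\aut(\F)$ and $\det T = 1$ (it is a cyclic permutation of coordinates, an even permutation), Lemma \ref{div} gives $T_{\ast}v = \lambda v$ for some $\lambda\in\C^{\ast}$; as $T^3 = \mathrm{id}$ we get $\lambda^3 = 1$. Writing out $T_{\ast}v = \lambda v$ componentwise — here $T(X:Y:Z)=(Y:Z:X)$ sends $X\mapsto Y$, $Y\mapsto Z$, $Z\mapsto X$, so $T_\ast(\partial_X) = \partial_Y$ etc. and the pullback on functions sends $A(X,Y,Z)$ to $A(Z,X,Y)$ (one must be careful about which convention: $T^{-1}$ vs $T$) — one finds $A_2(X,Y,Z) = \lambda^{-1} A_1(Z,X,Y)$ and $A_3 = \lambda^{-2}A_1(Y,Z,X)$, or after relabeling $\lambda\mapsto\lambda^{-1}$ and setting $A := A_1$,
\[
v = A(X,Y,Z)\partial_X + \lambda^2 A(Y,Z,X)\partial_Y + \lambda A(Z,X,Y)\partial_Z,
\]
with $\lambda^3=1$, $A$ homogeneous of degree $d$. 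The main subtlety I expect here is bookkeeping: getting the direction of the permutation and the placement of the powers of $\lambda$ consistent, since a sloppy sign or inverse turns $\lambda$ into $\lambda^2$ and swaps the roles of the second and third components. One also has to check that passing from $v$ to $\lambda v$ does not spoil $\mathrm{div}=0$ (it doesn't, since scaling is harmless) and that the $GR$ ambiguity has genuinely been removed by the normalization (it has, by Lemma \ref{div}).

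Finally, suppose $\aut(\F)$ contains $R(X:Y:Z)=(Y:X:Z)$ as well. Now $R$ is a transposition, so $\det R = -1$; since $v$ is divergence-free, Lemma \ref{div} still forces $R_{\ast}v = \mu v$ for some $\mu\in\C^{\ast}$, and $R^2=\mathrm{id}$ gives $\mu = \pm 1$. Composing $T$ and $R$: the group they generate contains $TR$ and $RT$, and one checks that conjugating the relation $T_\ast v = \lambda^{-1} v$ by $R$ (i.e. applying $R$ to the $T$-equation and using $RTR^{-1} = T^{-1}$ up to the relabeling forced by $R$) yields a constraint that is only consistent when $\lambda = \lambda^{-1}$, hence $\lambda=1$ (since $\lambda^3=1$ already, $\lambda\in\{1\}$). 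With $\lambda=1$ the vector field is $v = A(X,Y,Z)\partial_X + A(Y,Z,X)\partial_Y + A(Z,X,Y)\partial_Z$, and writing out $R_\ast v = \pm v$ componentwise gives $A(Y,X,Z) = \pm A(X,Y,Z)$ at the level of the first component — which, tracking the permutation $T\circ R$ acting on the argument of $A$, is exactly the assertion $A\circ T\circ R = \pm A$. The hard part of this last paragraph is again purely combinatorial: one must carry the coordinate permutations through $R_\ast$ correctly to see that the symmetry condition on the single polynomial $A$ comes out as $A\circ(T\circ R) = \pm A$ rather than some other composite; once the conventions are pinned down the algebra is immediate.
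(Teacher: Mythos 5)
Your proposal is correct and follows essentially the same route as the paper: reduce to $T\in\aut(\F)$ via Blichfeldt's classification, normalize $\mathrm{div}(v)=0$ so that Lemma \ref{div} forces $T_{\ast}v=\lambda v$ with $\lambda^3=1$, read off the cyclic form of $v$ componentwise, and then play $T$ against $R$ (the paper computes $(T\circ R)_{\ast}v=\lambda^2\alpha v$ with $\alpha^2=\beta^2=1$ rather than conjugating, but it is the same constraint) to force $\lambda=1$. The only slip is in the flagged bookkeeping at the end: the first-component comparison of $R_{\ast}v=\pm v$ yields $A(X,Z,Y)=\pm A(X,Y,Z)$, i.e.\ $A\circ T\circ R=\pm A$ directly, not $A(Y,X,Z)=\pm A(X,Y,Z)$, which would instead be $A\circ R=\pm A$; this does not affect the conclusion you state.
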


\begin{proof}
If $G$ does not fix any point in $\PP^2$, then up to conjugation we
can  suppose that $T \in G$. Let $v = A\partial_X + B\partial_Y +
C\partial_Z$ be a vector field defining $\F$ such that ${\rm div}(v)
= 0$. The vector field $T^{-1}_{\ast}v$ also defines $\F$ and it follows  from
Lemma \ref{div} that we have a relation $T^{-1}_{\ast}v = \lambda v$, 
for some $\lambda \in \C^{\ast}$. This means that
\[
(A\circ T)\partial_X + (B\circ T)\partial_Y + (C\circ T)\partial_Z =
\lambda(B\partial_X + C\partial_Y + A\partial_Z),
\]
which implies that $A\circ T = \lambda B$, $B\circ T = \lambda C$,
$C\circ T = \lambda A$, $\lambda^3 = 1$ and
\[
v = A(X,Y,Z)\partial_X + \lambda^2 A(Y,Z,X)\partial_Y + \lambda
A(Z,X,Y)\partial_Z.
\]

Suppose that $\aut(\F)$ has a subgroup $G$ of type (D). By Theorem
\ref{trimp}, we have that  $G$ is conjugate to a group of type (D1) or
(D2). Then, up to change of homogeneous coordinates, $G$ contains
$T$ and
\[
R(X: Y: Z) = (Y: X: Z).
\]

It follows that $R_{\ast}v = \alpha v$ and $(T\circ R)_{\ast}v =
\beta v$ with $\alpha^2=\beta^2 = 1$, since these automorphisms have
order two. However,
\[
\beta v = (T\circ R)_{\ast}v =T_{\ast}( R_{\ast}v) = \lambda^2\alpha
v
\]
hence $\lambda =1$ and $\alpha = \beta$. In particular,
\[
v = A(X,Y,Z)\partial_X + A(Y,Z,X)\partial_Y + A(Z,X,Y)\partial_Z.
\]
Calculating the pushforward by the automorphism $R$ we have
\[
R_{\ast}v = A(X,Z,Y)\partial_X + A(Y,X,Z)\partial_Y +
A(Z,Y,X)\partial_Z.
\]
Then $A(X,Z,Y) = \alpha A(X,Y,Z)$. That is, $A\circ T \circ R = \pm
A$.
\end{proof}

\subsection{Bounds}

Every nontrivial element $\varphi \in \pgl(3,\C)$, $\varphi \neq id$, of finite order is diagonalizable and either has three isolated fixed points or fixes
pointwise a line and a point outside of that line. An element in this last
case is called by  pseudo-reflection, and it  occurs when $\varphi$
has only one eigenvalue different from $1$. We begin our analysis by
these simple automorphisms.

\begin{proposition}\label{psref}
Let $\F$ be a foliation, of degree $d$, on $\PP^2$ invariant by an automorphism of the form 
\[
\varphi(X:Y:Z)=(X:Y:lZ),
\]
with  $l^n=1$ primitive, $n\geq 2$. Then $n\leq d+1$. Moreover, $(0:0:1) \not\in \sing(\F)$ if and only if one of the following holds:
\begin{enumerate}
\item $n \mid d$,  $\{Z=0\}$ is $\F$--invariant and the $\F$ is induced by a vector field of the form
$$
Z^{n-1}A(X,Y,Z^n)\partial_X + Z^{n-1}B(X,Y,Z^n)\partial_Y + C(X,Y,Z^n)\partial_Z;
$$
\item $n \mid d+1$,  $\{Z=0\}$ is not $\F$--invariant and the $\F$ is induced by a vector field of the form
$$
A(X,Y,Z^n)\partial_X + B(X,Y,Z^n)\partial_Y + ZC(X,Y,Z^n)\partial_Z.
$$
\end{enumerate}

\end{proposition}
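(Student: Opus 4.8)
The plan is to pass to a global polynomial vector field on $\C^{3}$ for $\F$, normalised so that $\varphi$ acts on it by a scalar, and then to read off from that scalar exactly which powers of $Z$ may appear in each coefficient; the statement then reduces to elementary bookkeeping with these exponents together with the fact that a degree‑$d$ foliation admits a saturated (not $Z$‑divisible) representative.

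First I would fix $v=A\partial_X+B\partial_Y+C\partial_Z$ with $A,B,C$ homogeneous of degree $d$ defining $\F$ and with $\mathrm{div}(v)=0$ (take $G=-\mathrm{div}(v)/(d+2)$, as explained before Lemma~\ref{div}). Since $\varphi\in\aut(\F)$ and $\mathrm{div}(v)=0$, Lemma~\ref{div} gives $\varphi_{\ast}v=\lambda v$ for some $\lambda\in\C^{\ast}$, and $\varphi^{n}=\mathrm{id}$ forces $\lambda^{n}=1$, so $\lambda=l^{-a}$ for a unique $a\in\{0,\dots,n-1\}$. Because $\varphi_{\ast}v$ has components $\bigl(A(X,Y,l^{-1}Z),\,B(X,Y,l^{-1}Z),\,lC(X,Y,l^{-1}Z)\bigr)$, writing $A=\sum_{j}A_{j}(X,Y)Z^{j}$ and similarly for $B,C$ and comparing the coefficient of each $Z^{j}$ in $\varphi_{\ast}v=\lambda v$ yields $A_{j}=B_{j}=0$ unless $j\equiv a\ (\mathrm{mod}\ n)$, and $C_{j}=0$ unless $j\equiv a+1\ (\mathrm{mod}\ n)$. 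Equivalently
\[
v=Z^{a}\widehat{A}(X,Y,Z^{n})\,\partial_X+Z^{a}\widehat{B}(X,Y,Z^{n})\,\partial_Y+Z^{\,s}\widehat{C}(X,Y,Z^{n})\,\partial_Z ,
\]
where $s\in\{0,\dots,n-1\}$ is the residue of $a+1$ modulo $n$ and $\widehat{A},\widehat{B},\widehat{C}$ are polynomials.

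Next I would pin down $a$. Since $\F$ has degree exactly $d$, the representative $v$ cannot be divisible by $Z$: otherwise $v=Zw$ with $w$ of degree $d-1$ defines the same foliation and $\{Z=0\}\subset\sing(\F)$, contradicting that the singular set is finite. If $1\le a\le n-2$, the smallest $Z$‑exponents occurring in $A,B,C$ are $a,a,a+1$, all in $\{1,\dots,n-1\}$, so $Z\mid v$ — impossible; hence $a\in\{0,n-1\}$. If $a=n-1$ then $Z^{n-1}\mid A$, which already forces $n-1\le d$, i.e.\ $n\le d+1$ (if $n>d+1$ we would get $A=B=0$ and, since also $n>d$, $v=\widehat{C}(X,Y)\partial_Z$, whose zero locus is the curve $\{\widehat{C}=0\}$, not a foliation). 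If $a=0$ and $n\le d$ we again have $n\le d+1$. The only way the bound could fail is $a=0$ with $n>d$; then all $Z^{n}$‑terms drop out and $v=A_{0}(X,Y)\partial_X+B_{0}(X,Y)\partial_Y+ZC_{1}(X,Y)\partial_Z$, which is fixed by every $\mathrm{diag}(1,1,t)$, $t\in\C^{\ast}$; this is excluded under the standing assumption that $\aut(\F)$ is finite (here $\F$ is preserved by a full one‑parameter group). Thus $n\le d+1$.

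For the ``moreover'', in the chart $\{Z=1\}$ with $x=X/Z$, $y=Y/Z$ the foliation $\F$ is generated by $\widetilde{v}=(A(x,y,1)-xC(x,y,1))\partial_x+(B(x,y,1)-yC(x,y,1))\partial_y$, whose two components are coprime because $\F$ has degree $d$ (their common zero locus is the finite set $\sing(\F)\cap\{Z\neq0\}$); hence $(0:0:1)\in\sing(\F)$ iff $A(0,0,1)=B(0,0,1)=0$, i.e.\ iff the coefficients of $Z^{d}$ in $A$ and in $B$ both vanish. So $(0:0:1)\notin\sing(\F)$ forces $Z^{d}$ to occur in $A$ or $B$, i.e.\ $d\equiv a\ (\mathrm{mod}\ n)$; combined with $a\in\{0,n-1\}$ this splits into: (i) $a=0$ and $n\mid d$, in which case $C$ has smallest $Z$‑exponent $1$, so $Z\mid C$, $\{Z=0\}$ is $\F$‑invariant, and $v$ has the form $\widehat{A}(X,Y,Z^{n})\partial_X+\widehat{B}(X,Y,Z^{n})\partial_Y+Z\widehat{C}(X,Y,Z^{n})\partial_Z$; and (ii) $a=n-1$ and $n\mid d+1$, in which case $Z^{n-1}$ divides $A,B$ while $Z\nmid C$ (else $Z\mid v$), so $\{Z=0\}$ is not $\F$‑invariant and $v$ has the form $Z^{n-1}\widehat{A}(X,Y,Z^{n})\partial_X+Z^{n-1}\widehat{B}(X,Y,Z^{n})\partial_Y+\widehat{C}(X,Y,Z^{n})\partial_Z$. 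The converse is the corresponding direct verification, reading these observations backwards. I expect this last matching to be the main obstacle: one must keep straight which of $n\mid d$, $n\mid d+1$ pairs with which normal form and with the (non)invariance of $\{Z=0\}$, and carefully justify the two saturation facts used (that $v$ is not divisible by $Z$, and that the two leading coefficients cannot vanish simultaneously for a genuinely degree‑$d$ foliation); by contrast, the extraction of the $Z$‑exponent constraints in the first step is a short mechanical computation.
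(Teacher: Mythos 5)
Your proposal is correct and follows essentially the same route as the paper: a divergence-free homogeneous representative $v$, Lemma~\ref{div} to obtain $\varphi_{\ast}v=\lambda v$ with $\lambda$ a power of $l$, the expansion in powers of $Z$ with congruence conditions on the exponents, the dichotomy according to whether $Z$ divides the $\partial_Z$-coefficient, and the affine computation at $(0:0:1)$. Two points of comparison are worth recording. First, at the one genuinely delicate step --- ruling out your case $a=0$ with $n>d$ --- you are more careful than the paper: in the case where $\{Z=0\}$ is $\F$-invariant the paper simply asserts $n\le d$ with no real justification, whereas you correctly observe that in that regime $v=A_{0}(X,Y)\partial_X+B_{0}(X,Y)\partial_Y+ZC_{1}(X,Y)\partial_Z$ is preserved by the whole one-parameter group $(X:Y:tZ)$, so the bound $n\le d+1$ genuinely fails without excluding such foliations (for instance $Y^{2}\partial_X+X^{2}\partial_Y$ defines a degree-$2$ foliation invariant under $(X:Y:lZ)$ for every root of unity $l$); your appeal to finiteness of $\aut(\F)$ is the right repair, although strictly that hypothesis does not appear in the proposition as enunciated. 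Second, your pairing of $n\mid d$ with invariance of $\{Z=0\}$ and the form $A(X,Y,Z^{n})\partial_X+B(X,Y,Z^{n})\partial_Y+ZC(X,Y,Z^{n})\partial_Z$, and of $n\mid d+1$ with non-invariance and the form with $Z^{n-1}$ in the first two slots, agrees with what the paper's proof actually derives; the displayed enunciation of the proposition has the two normal forms interchanged (as written, the form in item (2) has $Z$ dividing the $\partial_Z$-coefficient and hence forces $\{Z=0\}$ to be invariant), so the apparent ``swap'' in your conclusion is a typo in the statement, not an error on your part. The only soft spot in your write-up is dismissing the converse of the ``moreover'' as a direct verification: the normal form together with the congruence does not by itself force the $Z^{d}$-coefficient of $A$ or of $B$ to be nonzero, so the literal ``if and only if'' needs that nonvanishing as well; the paper's proof is equally loose on this point.
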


\begin{proof}
Let $v = \widetilde{ A}\partial_X + \widetilde{ B}\partial_Y + \widetilde{C}\partial_Z$ be a vector field inducing $\F$ such that ${\rm div}(v) = 0$. Then, for some $\lambda \in \C^{\ast}$, we have 
\begin{equation}\label{pseq}
\left(\widetilde{ A}\circ \varphi\right)\partial_X + \left(\widetilde{ B}\circ \varphi\right)\partial_Y + \left(\widetilde{ C}\circ \varphi\right)\partial_Z = \lambda\left(\widetilde{ A}\partial_X + \widetilde{ B}\partial_Y + l\widetilde{ C}\partial_Z\right).
\end{equation}
This implies that $\widetilde{ A}\circ \varphi =\lambda \widetilde{ A}$, $\widetilde{ B}\circ \varphi =\lambda \widetilde{ B}$ and  $\widetilde{ C}\circ \varphi =\lambda l\widetilde{ C}$. We also have that $\lambda = l^r$ for some $r>0$, since $\varphi^n=id$ implies that $\lambda^n=1$. We can  expand these polynomials in terms of $Z\colon$
\begin{align*}
\widetilde{ A}(X,Y,Z) = \sum_{k}a_{d-k}(X,Y)Z^k,\\
\widetilde{ B}(X,Y,Z) = \sum_{k}b_{d-k}(X,Y)Z^k,\\
\widetilde{ C}(X,Y,Z) = \sum_{k}c_{d-k}(X,Y)Z^k,
\end{align*}
where $a_{d-k}$, $b_{d-k}$ and $c_{d-k}$ are homogeneous of degree $d-k$. Therefore, the equation (\ref{pseq}) implies that if $a_{d-k}\neq0$ or $b_{d-k}\neq0$ then $k\equiv r \pmod{n}$, and  if $c_{d-k}\neq0$ then $k\equiv r+1 \pmod{n}$. 

By definition, $\F$ has isolated singularities, hence $\widetilde{ A}$, $\widetilde{ B}$ and $\widetilde{C}$ are relative prime polynomials. In particular, $Z$ divides at most two of them. We separate our argument in two cases: whether $Z$ divides $\widetilde{C}$  or not.

First case: $Z$ does not divide $\widetilde{ C}$ ($\{Z=0\}$ is not $\F$--invariant).\\
We have that $c_d \neq 0$, hence $r\equiv -1 \pmod{n}$. This implies that $a_{d-k}=b_{d-k}=0$ for all $k$ such that $k\not\equiv -1 \pmod{n}$. Therefore, $Z^{n-1}$ divides $\widetilde{ A}$ and $\widetilde{ B}$. In particular, 
\[
n \leq d+1.
\]
In this case, we obtain 
\[
v = Z^{n-1} A(X,Y,Z^n)\partial_X + Z^{n-1}B(X,Y,Z^n)\partial_Y + C(X,Y,Z^n)\partial_Z.
\]

Second case: $Z$ divides $\widetilde{ C}$ ($\{Z=0\}$ is $\F$--invariant).\\
We have that $Z$ does not divide $\widetilde{ A}$ or $\widetilde{ B}$, hence $a_{d}\neq 0$ or $b_{d}\neq 0$. Either way, it follows that $r\equiv 0 \pmod{n}$. Therefore, $\widetilde{ A}$ and $\widetilde{ B}$ are $\varphi$--invariant, that is, they are polynomials in $X$, $Y$ and $Z^n$. In particular, 
\[
n\leq d.
\]
In this case, we obtain 
\[
v = A(X,Y,Z^n)\partial_X + B(X,Y,Z^n)\partial_Y + ZC(X,Y,Z^n)\partial_Z.
\]

Now, we analyze the foliation around the point $(0:0:1)$. In the chart $\{Z\neq 0\}$, let $(x,y) = (X/Z, Y/Z)$ be the affine  coordinates. Then $(0:0:1)$ corresponds to the origin and $\F$ is given by
\[
v(x,y) = [\widetilde{ A}(x,y,1) -x\widetilde{ C}(x,y,1)]\partial_x + [\widetilde{ B}(x,y,1) -y\widetilde{ C}(x,y,1)]\partial_y.
\]
Then $v$ has a singularity at the origin if and only if $\widetilde{ A}(0,0,1)=\widetilde{ B}(0,0,1)=0$. Therefore, $(0:0:1)\ \not\in \sing(\F)$ if and only if $a_0 \neq 0$ or $b_0 \neq 0$. In any case, we have $d\equiv r  \pmod{n}$. Then 
\begin{enumerate}
\item $d \equiv -1 \pmod{n}$ if $\{Z=0\}$ is not $\F$--invariant, or
\item $d \equiv 0 \pmod{n}$ if $\{Z=0\}$ is $\F$--invariant.
\end{enumerate}
\end{proof}

Our next step is to analyze the automorphisms that fix only regular
points. Proposition \ref{streg} gives a glimpse on what we might
find. Speaking of which, the global situation imposes even more restrictions.

\begin{theorem}\label{p2reg}
Let $\F$ be a foliation in $\PP^2$ of degree $d$. Let $G<\aut(\F)$ be a finite subgroup that has fixed points. If $G$ fixes only regular points, then it is cyclic and has a generator of the form
\[
\varphi(X:Y:Z) = (X:lY:l^{k+1}Z),
\]
where $l$ is a root of the unity, $k = \tg(\F,L_p, p)>0$ and $L_p$ is the line tangent to $\F$ at $p\notin \sing(\F)$  a fixed point. Moreover, 
\begin{itemize}
\item $|G|\mid (d^2 + d + 1)$ if all singularities have trivial stabilizers in $G$; or
\item $|G|\mid d(d+1)$ if there exists a singularity with nontrivial stabilizer.
\end{itemize}

\end{theorem}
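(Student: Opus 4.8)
The plan is to run the local normal‑form result of Proposition~\ref{tgreg} at a fixed point and then feed its conclusion into the global orbit identities of Propositions~\ref{orb} and~\ref{orbind}. For the first assertion: let $p$ be a fixed point of $G$; by hypothesis $p\notin\sing(\F)$, so $\F$ is a regular germ at $p$ and $G<\aut(\F,p)$. Proposition~\ref{streg} gives that $G$ is abelian, hence—being a finite abelian subgroup of $\pgl(3,\C)$—simultaneously diagonalizable; fix homogeneous coordinates in which $G$ is diagonal. Then $q_1=(1{:}0{:}0)$, $q_2=(0{:}1{:}0)$, $q_3=(0{:}0{:}1)$ are $G$-fixed, hence regular points of $\F$. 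At most one of the three coordinate lines can be $\F$-invariant, since two invariant lines would meet at a $G$-fixed point of $\sing(\F)$; so after relabelling we may take $p=q_1$ with the tangent line $L_p$ of $\F$ at $p$—which is $G$-invariant because $G$ preserves the leaf through $p$ and its tangent direction—not $\F$-invariant. Then $k:=\tg(\F,L_p,p)$ is finite and $\geq1$, and Proposition~\ref{tgreg} applied to $(\F,p)$ with $C=L_p$ shows $G$ is cyclic with a local generator multiplying the $L_p$-direction by a root of unity $l$ and the transverse direction by $l^{k+1}$; comparing linear parts with the diagonal form, a generator of $G$ is $\varphi(X{:}Y{:}Z)=(X{:}lY{:}l^{k+1}Z)$ with $L_p=\{Z=0\}$. (If $1,l,l^{k+1}$ are not pairwise distinct then $\varphi$ is a pseudo-reflection and the statement is contained in Proposition~\ref{psref}; so assume $\varphi$ has exactly the three fixed points $q_i$.)

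For the divisibility, the Baum--Bott relation~\eqref{BB} gives $\sum_{x\in\sing(\F)}\mu(\F,x)=c_2(T\PP^2\otimes K_{\F})=d^2+d+1$, so Proposition~\ref{orb} yields $d^2+d+1=\sum_i\frac{|G|}{|H_i|}\mu(\F,x_i)$. If every stabilizer $H_i$ is trivial the right-hand side is a multiple of $|G|$, which is the first bullet. Otherwise take $x_0\in\sing(\F)$ with stabilizer $H$, $|H|=m>1$, generated by $\psi=\varphi^{|G|/m}$. Since $q_1,q_2,q_3$ are regular, $x_0$ is none of them, so the fixed-point set of $\psi=(X{:}l^{|G|/m}Y{:}l^{(k+1)|G|/m}Z)$ strictly contains $\{q_1,q_2,q_3\}$ and hence contains a line; this forces $m\mid k$ or $m\mid k+1$, and in either case $x_0$ lies on the coordinate line $D$—namely $\{X=0\}$ if $m\mid k$ and $\{Y=0\}$ if $m\mid k+1$—which $\psi$ fixes pointwise. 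In particular $e:=\gcd(|G|,k)$ (resp.\ $\gcd(|G|,k+1)$) is $>1$ and equals the order of the pointwise stabilizer of $D$.

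Now combine the orbit relations. Applying Proposition~\ref{orbind}(1) to the $G$-invariant, non-$\F$-invariant line $L_p=\{Z=0\}$, whose only non-freely-stabilized points are the $G$-fixed $p$ and $q_2$, gives $d=\tg(\F,L_p)=k+\tg(\F,L_p,q_2)+|G|\cdot(\text{integer})$. If $\F$ is tangent to $L_p$ at $q_2$, Proposition~\ref{tgreg} at $q_2$ (with $C=L_p$) pins down the eigenvalues of $\varphi$ there, forcing $k+\tg(\F,L_p,q_2)\equiv-1\pmod{|G|}$, whence $|G|\mid d+1$. Otherwise $\tg(\F,L_p,q_2)=0$, so $d\equiv k\pmod{|G|}$, and the tangent line of $\F$ at the $G$-fixed point of $D$ lying off $L_p$ is $D$ itself; Proposition~\ref{tgreg} applied there with $C=D$ shows that if $D$ were not $\F$-invariant then $k$ (resp.\ $k+1$) would be a unit modulo $|G|$, contradicting $e\mid|G|$, $e>1$. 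Hence $D$ is $\F$-invariant (which can only happen for $D=\{X=0\}$); then the Gomez-Mont--Seade--Verjovsky formula~\eqref{gsv} with Proposition~\ref{orbind}(2) gives $\tfrac{|G|}{e}\mid d+1$, while $d\equiv k\pmod{|G|}$ and $e\mid k$ give $e\mid d$; in the remaining case $m\mid k+1$ one finds instead that $\F$ is transverse to $\{Y=0\}$ at both of its $G$-fixed points, so $\tfrac{|G|}{e}\mid d$ by~\eqref{tang} while $e\mid d+1$. Since $\gcd(e,|G|/e)=1$ in every configuration, $|G|\mid d(d+1)$, which is the second bullet.

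The main obstacle is the last paragraph: one must first show that any singularity with nontrivial stabilizer is forced to lie on a coordinate line pointwise-fixed by a subgroup, and then track the sub-configurations—which coordinate line (if any) is $\F$-invariant, and where the tangent lines of $\F$ at $q_2$ and $q_3$ point—checking in each that the eigenvalue constraints coming from Proposition~\ref{tgreg} are incompatible with a ``bad'' arithmetic, so that the orbit relations along $L_p$, along $D$, and the local constraint at the auxiliary fixed point combine to exactly $|G|\mid d(d+1)$.
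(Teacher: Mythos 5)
Your first paragraph and the trivial-stabilizer bullet essentially reproduce the paper's argument (Proposition~\ref{streg} to diagonalize, Proposition~\ref{tgreg} at a fixed point whose tangent line is a non-invariant coordinate line, Proposition~\ref{orb} with $c_2=d^2+d+1$). For the nontrivial-stabilizer bullet you take a genuinely different route: eigenvalue congruences from Proposition~\ref{tgreg} at the auxiliary fixed points combined with orbit counts along $L_p$ and along the pointwise-fixed line $D$, whereas the paper feeds the pseudo-reflection generating the stabilizer into Proposition~\ref{psref}, reads off the normal form of the defining vector field, and from that form determines the tangency behaviour at the coordinate points before counting orbits on $D$. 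Your branch ``$\F$ tangent to $L_p$ at $q_2$'' and your subcase $m\mid k+1$ are correct as sketched (the former even yields the stronger conclusion $|G|\mid d+1$).

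The gap is in the subcase $m\mid k$, $D=\{X=0\}$: the assertion that ``the tangent line of $\F$ at the $G$-fixed point of $D$ lying off $L_p$'' (i.e.\ at $q_3=(0{:}0{:}1)$) ``is $D$ itself'' is unjustified and in fact circular. A priori the tangent line of $\F$ at $q_3$ is one of the two coordinate lines through $q_3$, and it equals $\{X=0\}$ precisely in the situation you are trying to establish; if instead it is $\{Y=0\}$, then $\tg(\F,\{X=0\},q_3)=0$, Proposition~\ref{tgreg} with $C=\{X=0\}$ at $q_3$ says nothing, no contradiction appears, and your deduction that $D$ is $\F$-invariant collapses. (Compare the paper's subcase where $\{X=0\}$ is \emph{not} invariant: there the normal form from Proposition~\ref{psref} shows the tangent line at $q_3$ is $\{Y=0\}$.) The step is repairable: either run the eigenvalue argument at $q_2$ rather than $q_3$ --- in your branch the hypothesis $\tg(\F,L_p,q_2)=0$ forces the tangent line of $\F$ at $q_2$ to be $\{X=0\}$, and then non-invariance of $\{X=0\}$ would give $k(k'''+1)\equiv-1\pmod{|G|}$ for $k'''=\tg(\F,\{X=0\},q_2)\geq 1$, making $k$ a unit modulo $|G|$ and contradicting $e>1$ --- or simply invoke Proposition~\ref{psref} at the regular point $q_1$: since $e\mid d$ and $e>1$, the alternative ``$e\mid d+1$ and $\{X=0\}$ not invariant'' is impossible, so $\{X=0\}$ is invariant. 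Finally, the closing appeal to $\gcd(e,|G|/e)=1$ is both unproved and unnecessary: $e\mid d$ and $|G|/e\mid d+1$ (or the reverse pairing) already give $|G|\mid d(d+1)$.
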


\begin{proof}
Let  $p\notin \sing(\F)$ be a fixed point. Then, $G$ is abelian by Proposition \ref{streg}. We can choose coordinates such that $G$ is diagonal and fixes $p=(1:0:0)$, $(0:1:0)$ and $(0:0:1)$, and $\F$ is tangent to $\{Z=0\}$ at $p$. We can suppose also that $\{Z=0\}$ is not $\F$--invariant. Indeed, $G$ leaves three lines invariant: $\{X=0\}$, $\{Y=0\}$ and $\{Z=0\}$. If two of these lines were $\F$--invariant, there would exist a singularity fixed by $G$ at their intersection. Hence, at most one of these lines can be $\F$--invariant. Up to permuting coordinates we can choose $\{Z=0\}$ to be not $\F$--invariant. In this case, Proposition \ref{tgreg} implies that $G$ is cyclic and is generated by 
\[
\varphi(X:Y:Z) = (X:lY:l^{k+1}Z),
\]
where $l$ is a root of the unity and $k = \tg(\F,\{Z=0\}, p)>0$. 

We can decompose $\sing(\F)$ into disjoint $G$--orbits. It follows from Proposition \ref{orb} that 
\[
d^2 + d + 1 = c_2(T\PP^2\otimes\OO_{\PP^2}(d-1)) =  \sum_i \frac{|G|}{|H_i|}\mu(\F,t_i), 
\]
where the $t_i \in \sing(\F)$ lie on disjoint orbits and $H_i$ is the stabilizer of $t_i$. If all singularities have trivial stabilizers, then 
\[
|G| \mid (d^2 + d + 1).
\]
Now, suppose  that there exists a singularity with nontrivial stabilizer, say $t_1$. Every element of $H_1$ fixes more than three points, hence $H_1$ is cyclic and generated by a pseudo-reflection $\psi = \varphi^a$,  with $a = \left(G:H_1\right)>0$. We have two distinct cases:
\begin{enumerate}
\item $|H_1|=\gcd(k+1, |G|)>1$ and $H_1$ is generated by
\[
\psi(X:Y:Z) = (X:l^aY:Z).
\]
\item $|H_1|=\gcd(k, |G|)>1$ and $H_1$ is generated by
\[
\psi(X:Y:Z) = (l^{-a}X:Y:Z).
\] 
\end{enumerate}

For the first case, the line $\{Y=0\}$ is not $\F$--invariant since $\F$ is tangent to $\{Z=0\}$ at $p$. By the Proposition \ref{psref}, $|H_1|$ divides $d+1$  and $\F$ is given by a vector field of the form
\[
v = Y^{u-1}A(X,Y^u,Z)\partial_X + B(X,Y^u,Z)\partial_Y + Y^{u-1}C(X,Y^u,Z)\partial_Z,
\] 
where $u=|H_1|$. It is straightforward to verify that, in this case, $\F$ is tangent to $\{X=0\}$ at $(0:0:1)$. The line $\{Y=0\}$ is $G$--invariant, since $G$ fixes $(0:0:1)$ and $(1:0:0)$, and we have seen that $\F$ is not tangent to $\{Y=0\}$ at these two points. Since $H_1$ is the stabilizer for every other point in $\{Y=0\}$, Proposition \ref{orbind} implies
\[
d = \tg(\F, \{Y=0\}) =  \left(G:H_1\right) \sum_{t\in \{Y=0\}}  \tg(\F, \{Y=0\}, t).
\]
Then $\left(G:H_1\right)$ divides $d$ and we have
\[
|G|  = \left(G:H_1\right)|H_1| \mid d(d+1).
\]

We may separate the second case into two subcases: whether $\{X=0\}$ is $\F$--invariant or not. First suppose that $\{X=0\}$ is not $\F$--invariant. By the Proposition \ref{psref}, $|H_1|$ divides $d+1$ and $\F$ is given by a vector field of the form
\[
v = A(X^u,Y,Z)\partial_X + X^{u-1}B(X^u,Y,Z)\partial_Y + X^{u-1}C(X^u,Y,Z)\partial_Z,
\] 
where $u=|H_1|$. Then $\F$ is tangent to $\{Y=0\}$ at $(0:0:1)$ and to $\{Z=0\}$ at $(0:1:0)$. Therefore, the proof follows the argument of the first case.

Now suppose that $\{X=0\}$ is $\F$--invariant. Proposition \ref{psref} implies that $|H_1|$ divides $d$ and $\F$ is given by a vector field of the form
\[
v = XA(X^u,Y,Z)\partial_X + B(X^u,Y,Z)\partial_Y + C(X^u,Y,Z)\partial_Z,
\]   
where $u=|H_1|$. Since the points $(0:0:1)$ and $(0:1:0)$ are regular and $H_1$ is the stabilizer of any other point in $\{X=0\}$, Proposition \ref{orbind} implies
\[
d+1 = {\rm Z}(\F, \{X=0\}) =\left(G:H_1\right) \sum_{t\in \{X=0\}} {\rm Z}(\F, \{X=0\}, t).
\]
Then $\left(G:H_1\right)$ divides $d+1$ and we have
\[
|G|  = \left(G:H_1\right)|H_1| \mid d(d+1).
\]
\end{proof}

Now we turn our attention to the groups that fix a singularity.
We restrict ourselves to the reduced ones.

\begin{proposition}\label{p2A}
Let $\F$ be a foliation in $\PP^2$ of degree $d\geq 3$. Let
$G<\aut(\F)$ be a finite subgroup that fixes a reduced singularity
$p$ which does not have a line as a separatrix. Then
\[
|G| \leq 2(d^2-1) .
\]
Moreover,  
\[
|G| \leq (d^2-1)
\]
if $G$ is abelian.
\end{proposition}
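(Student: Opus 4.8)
The plan is to exploit the local description of $G$ near the fixed reduced singularity $p$ together with the Baum–Bott-type global identity of Proposition \ref{orb}. First I would linearize $G$ at $p$ via Lemma \ref{finlin} and invoke Proposition \ref{stab}: either $\lambda\neq -1$ and $G$ is abelian diagonal, or $\lambda=-1$ and $G$ has an abelian diagonal subgroup $H$ of index at most two. So it suffices to prove the bound $|G|\leq d^2-1$ when $G$ is abelian diagonal, and then the index-two case gives $|G|\leq 2(d^2-1)$ automatically. Thus fix coordinates on $\PP^2$ where $G$ is diagonal, $p=(1:0:0)$, and the other two fixed points are $(0:1:0)$, $(0:0:1)$; the two separatrices of the reduced singularity at $p$ are (generically) the two coordinate lines through $p$, namely $\{Y=0\}$ and $\{Z=0\}$. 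The hypothesis that $p$ has no line as a separatrix means neither $\{Y=0\}$ nor $\{Z=0\}$ is $\F$-invariant (an $\F$-invariant line through $p$ would be a separatrix of $p$ that is a line).

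The key step is then to measure $G$ against the two lines $\{Y=0\}$ and $\{Z=0\}$, both of which are $G$-invariant and non-$\F$-invariant, via the tangency formula \eqref{tang} and the orbit decomposition of Proposition \ref{orbind}(1). A general element of $G$ has the form $\varphi(X:Y:Z)=(X:l^a Y:l^b Z)$; I would first argue, using the linearization of $v$ at $p$ (a reduced singularity, eigenvalue ratio $\lambda$, with $P,Q$ of order $\geq 2$), that the subgroup of $G$ fixing the leaves of either separatrix direction is controlled — more precisely that each $\varphi$ acts on the tangent directions to the two separatrices by independent roots of unity, so $G$ embeds into a product of two cyclic groups whose orders divide $\tg(\F,\{Y=0\})+1$ and $\tg(\F,\{Z=0\})+1$ respectively, or something of that flavor. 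Concretely: on each line $\{Y=0\}$, which passes through the fixed points $(1:0:0)=p$ and $(0:0:1)$, the stabilizer-orbit identity gives $\tg(\F,\{Y=0\})=1+1\cdot(d-2)=d-1$ if $\F$ is tangent to $\{Y=0\}$ at exactly one of these two points with index $1$, wait — more carefully, $\tg(\F,\{Y=0\}) = \{Y=0\}^2 + K_\F\cdot\{Y=0\} = 1 + (d-1) = d$, and similarly $\tg(\F,\{Z=0\})=d$; the tangencies at $p$ are the relevant local invariants, and away from $p$ the line $\{Y=0\}$ hits only finitely many $G$-orbits plus the fixed point $(0:0:1)$.

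The mechanism I expect to close the bound is: using Proposition \ref{fibtg} (or a direct computation as in Proposition \ref{tgvec}) applied to the foliation at $p$ and each of the two non-invariant separatrix lines, $G$ is forced into a product structure with $|G|$ dividing $(\tg(\F,\{Y=0\},p)+1)\cdot(\text{something involving }\{Z=0\})$, and then the global tangency count $\tg(\F,L)=d$ for the coordinate lines, combined with the fact that nontrivial stabilizers along those lines force divisibility conditions, yields $|G|\mid$ a quantity bounded by $(d-1)(d+1)=d^2-1$ — the two factors $d-1$ and $d+1$ coming from $\tg$ along one line and $\mathrm{Z}$-index / $\tg$ along the other, exactly as in the endgame of Theorem \ref{p2reg}. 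The abelian case then gives $|G|\leq d^2-1$, and Proposition \ref{stab}(2) upgrades this to $|G|\leq 2(d^2-1)$ in general. The main obstacle, I expect, is bookkeeping the case analysis at $p$: one must rule out (or handle) the possibility that a coordinate line through $p$ becomes tangent to $\F$ with high order, or that the two "separatrix" coordinate lines are not in fact the separatrices (which is why the hypothesis $d\geq 3$ and "no line as separatrix" is needed — it prevents degenerate configurations where the bound could fail), and to keep track of where the $+1$ versus $-1$ in the exponents (and hence the $d+1$ versus $d-1$ factors) come from, paralleling the dichotomy in Propositions \ref{tgreg}, \ref{tgvec} and \ref{fibtg}.
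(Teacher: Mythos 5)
Your setup is the paper's: reduce to the abelian diagonal case via Proposition \ref{stab} (with the index--two extension accounting for the factor $2$), place $p$ at a coordinate vertex, and observe that the two $G$--invariant coordinate lines through $p$ are not $\F$--invariant because $p$ has no line as a separatrix. Up to that point you match the paper. But the closing step — the only place where the quadratic bound can actually be produced — is left as a guess, and the guess is not the right mechanism. You propose that $G$ embeds in a product of two cyclic groups of orders dividing $\tg(\F,L,p)+1$, or that $|G|$ divides a product of factors $d-1$ and $d+1$ obtained from orbit counting as in the endgame of Theorem \ref{p2reg}. Neither works here: the lines through $p$ are not $\F$--invariant, so the $\mathrm{Z}$--index is unavailable, and Proposition \ref{orbind} applied to $\tg(\F,L)=d$ only tells you that the local tangency indices at $p$ are at most $d$ and that the index of a generic stabilizer divides a certain sum — it does not bound $|G|$ itself, since the stabilizer of $p$ is all of $G$. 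There is also no product structure: Proposition \ref{fibtg} says $G$ is (generically) cyclic.

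The actual closing idea in the paper is a compatibility congruence between the two lines. Applying Proposition \ref{fibtg} to $\{X=0\}$ produces a generator $\varphi(X:Y:Z)=(l^{k}X:lY:Z)$ with $k=\tg(\F,\{X=0\},p)$, and applying it to $\{Y=0\}$ produces a generator $\psi(X:Y:Z)=(\zeta X:\zeta^{q}Y:Z)$ with $q=\tg(\F,\{Y=0\},p)$. Since both generate the same cyclic group, $\psi=\varphi^{r}$ with $\gcd(r,|G|)=1$, which forces $kq\equiv 1\pmod{|G|}$. As each local tangency is at most the total tangency $d$ along its line, either $k\equiv 1\pmod{|G|}$ (so $\varphi$ is a pseudo-reflection and Proposition \ref{psref} gives $|G|\le d+1$), or $|G|\le kq-1\le d^{2}-1$. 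The remaining abelian-but-not-cyclic case of Proposition \ref{fibtg} is disposed of by Proposition \ref{psref} as well, giving $|G|\le 2(d+1)\le d^{2}-1$ for $d\ge 3$ — this is where the hypothesis $d\ge 3$ is genuinely used, not to rule out degenerate separatrix configurations as you suggest. Without the congruence $kq\equiv 1\pmod{|G|}$ your argument cannot conclude, so this is a real gap rather than a bookkeeping issue.
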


\begin{proof}
It follows from  Theorem \ref{bli} that $G$ is of type $(A)$ or $(B)$, since it has a fixed point. Suppose that $G$ is of type $(A)$. Up to a change of coordinates $G$ is diagonal and $p=(0 : 0 : 1)$. Then $G$ has three invariant lines: $\{X=0\}$, $\{Y=0\}$ and $\{Z=0\}$. By hypothesis, $\{X=0\}$ and $\{Y=0\}$ are not $\F$--invariant. Hence the Proposition \ref{fibtg} implies that $G$ falls in one the following cases:
\begin{enumerate}
\item $G$ is cyclic;
\item $G$ has a cyclic subgroup generated by a pseudo-reflection whose index is two.
\end{enumerate}
In the second case, Proposition \ref{psref} implies that 
\[
|G| \leq 2(d+1) \leq d^2-1,
\]
since $d\geq 3$. In the first case, Proposition \ref{fibtg} also shows that $G$ is generated by 
\[
\varphi(X: Y: Z) = (l^kX: lY: Z),
\]
where $l^{|G|} = 1$ and $k = \tg(\F, \{X=0\}, p)$ if we consider the line $\{X=0\}$, and
\[
\psi(X: Y: Z) = (\zeta X: \zeta^q Y: Z),
\]
where $\zeta^{|G|} = 1$ and $q = \tg(\F, \{Y=0\}, p)$ if we consider the line $\{Y=0\}$. Therefore,  we have that $\psi = \varphi^r$ for some $r$ such that  $\gcd(r,|G|)=1 $. This implies that $\zeta= l^r$ and $kq \equiv 1 \pmod{|G|}$. If $k \equiv 1 \pmod{|G|}$, then $\varphi$ is a pseudo-reflection. It follows form  Proposition \ref{psref} that 
\[
|G| \leq d+1 < d^2-1.
\] 
If $k \not\equiv 1 \pmod{|G|}$ then
\[
|G| \leq kq-1 \leq d^2-1.
\]

If $G$ is not abelian (type $(B)$) then the Proposition \ref{stab} implies that $G$ has an abelian subgroup of index two. Therefore,
\[
|G|\leq 2(d^2-1).
\]
\end{proof}

The groups that do not fix any point contain the automorphism $T$
described in Theorem \ref{bli}. For the simplest case, type (C1), we
have the following:

\begin{proposition}\label{psrefT}
Let $\F$ be a foliation in $\PP^2$ of degree $d$. Suppose that, for some
choice of coordinates, $\aut(\F)$ contains $T$ and a diagonal
pseudo-reflection $\varphi$ of order $n$, i.e, $\aut(\F)$
contains a subgroup of type (C1). Then,  either:
\begin{itemize}
\item $n \mid (d-1)$ if the curve $\{XYZ=0\}$ is $\F$--invariant, or
\item $n \mid (d+2)$ if the curve $\{XYZ=0\}$ is not $\F$--invariant.
\end{itemize}
Moreover, $\F$ always has singularities at the points $(0:0: 1)$,
$(0:1: 0)$ and $(1: 0: 0)$. In the second case, these singularities are
reduced only if $n=2$.
\end{proposition}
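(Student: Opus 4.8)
The plan is to compute in the normal form of Lemma~\ref{fixT}. Conjugating the configuration by a power of $T$ if necessary --- which leaves $T\in\aut(\F)$ unchanged --- we may assume the diagonal pseudo-reflection is $\varphi(X:Y:Z)=(X:Y:lZ)$ with $l^{n}=1$ primitive. Since a group of type (C1) has no fixed point in $\PP^{2}$, neither does $\aut(\F)$, so Lemma~\ref{fixT} provides a divergence-free vector field inducing $\F$,
\[
v=\widetilde{A}\,\partial_{X}+\widetilde{B}\,\partial_{Y}+\widetilde{C}\,\partial_{Z},\qquad \widetilde{A}=A(X,Y,Z),\quad \widetilde{B}=\lambda^{2}A(Y,Z,X),\quad \widetilde{C}=\lambda A(Z,X,Y),
\]
with $\lambda^{3}=1$ and $A$ homogeneous of degree $d$. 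By Lemma~\ref{div}, $\varphi_{\ast}v=\mu v$ for some $\mu\in\C^{\ast}$, and $\varphi^{n}=\mathrm{id}$ forces $\mu^{n}=1$, so $\mu=l^{m}$ for some $m\in\Z$. Writing $\varphi_{\ast}v=\mu v$ componentwise yields
\[
A(X,Y,l^{-1}Z)=\mu A(X,Y,Z),\quad A(Y,l^{-1}Z,X)=\mu A(Y,Z,X),\quad lA(l^{-1}Z,X,Y)=\mu A(Z,X,Y),
\]
and comparing monomials, every monomial $X^{i}Y^{j}Z^{k}$ of $A$ satisfies $i\equiv 1-m$, $j\equiv -m$, $k\equiv -m\pmod{n}$; in particular $d\equiv 1-3m\pmod{n}$. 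Writing $\langle t\rangle\in\{0,\dots,n-1\}$ for the residue of $t$ mod $n$, this says $X^{\langle 1-m\rangle}Y^{\langle -m\rangle}Z^{\langle -m\rangle}$ divides $A$; applying the same analysis to $\widetilde{B}$ and $\widetilde{C}$, the monomial $(XYZ)^{a}$ divides all of $\widetilde{A},\widetilde{B},\widetilde{C}$, where $a=\min(\langle 1-m\rangle,\langle -m\rangle)$.

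Two facts drop out immediately. First, $A$ contains no monomial $Y^{d}$ or $Z^{d}$ (either would force $\langle 1-m\rangle=\langle -m\rangle=0$, impossible for $n\geq 2$), hence $\widetilde{A}$ and $\widetilde{B}$ both vanish at $(0:0:1)$; in the chart $Z\neq 0$ this point is the origin and $\F$ is generated there by $[\widetilde{A}(x,y,1)-x\widetilde{C}(x,y,1)]\partial_{x}+[\widetilde{B}(x,y,1)-y\widetilde{C}(x,y,1)]\partial_{y}$, which vanishes at the origin. So $(0:0:1)\in\sing(\F)$, and by $T$-equivariance so do $(0:1:0)$ and $(1:0:0)$. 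Second, if $m\not\equiv 0$ and $m\not\equiv 1\pmod{n}$ then $\langle 1-m\rangle\geq 1$ and $\langle -m\rangle\geq 1$, so $a\geq 1$ and $XYZ$ divides $v$; this makes $\F$ be defined by a vector field of smaller degree, contradicting $\deg\F=d$. Therefore $m\equiv 0$ or $m\equiv 1\pmod{n}$.

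Now I distinguish the two cases, using that a direct computation (as in the proof of Proposition~\ref{psref}) shows each coordinate line is $\F$-invariant exactly when $X\mid A$; since $T$ permutes these lines transitively this is also the condition for $\{XYZ=0\}$ to be $\F$-invariant. If $m\equiv 0$ then $\langle 1-m\rangle=1$ forces $X\mid\widetilde{A}=A$, so $\{XYZ=0\}$ is $\F$-invariant, and $d\equiv 1\pmod{n}$ gives $n\mid d-1$. If $m\equiv 1$ then $\langle 1-m\rangle=0$ and $\langle -m\rangle=n-1$, so $X$ divides $\widetilde{B}$ and $\widetilde{C}$; if it divided $\widetilde{A}$ too we would get $X\mid v$, impossible, so $\{XYZ=0\}$ is not $\F$-invariant, and $d\equiv -2\pmod{n}$ gives $n\mid d+2$. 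For the last assertion, keep $m\equiv 1$: then in $\widetilde{A}(x,y,1)=A(x,y,1)$ every exponent of $y$ is $\geq n-1$, in $\widetilde{B}(x,y,1)=\lambda^{2}A(y,1,x)$ every exponent of $x$ is $\geq n-1$, and in $\widetilde{C}(x,y,1)=\lambda A(1,x,y)$ every exponent of both $x$ and $y$ is $\geq n-1$. Substituting into the local generator at $(0:0:1)$ above --- whose two coefficients are coprime, because $\gcd(\widetilde{A},\widetilde{B},\widetilde{C})=1$ and the associated polynomial $1$-form contracts to zero with the radial field $R$ --- its first coefficient is divisible by $y^{n-1}$ and its second by $x^{n-1}$. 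For $n\geq 3$ both vanish to order $\geq 2$ at the origin, so the linear part is zero and the singularity is not reduced; the same holds at $(0:1:0)$ and $(1:0:0)$. Hence these singularities can be reduced only if $n=2$.

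The work here is essentially bookkeeping rather than conceptual: one must carefully track how the cyclic permutations of $X,Y,Z$ in the normal form interact with the exponent congruences --- it is easy to transpose the variables when dehomogenizing, so the divisibilities of $\widetilde{B}$ and $\widetilde{C}$ and the substitutions $\widetilde{B}(x,y,1)=\lambda^{2}A(y,1,x)$, $\widetilde{C}(x,y,1)=\lambda A(1,x,y)$ should be verified with care --- and one must make sure that the dichotomy $m\in\{0,1\}$ and the contradiction in the case $m\equiv 1$ genuinely use the hypothesis $\deg\F=d$, equivalently that $\widetilde{A},\widetilde{B},\widetilde{C}$ have no common factor.
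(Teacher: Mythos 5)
Your proof is correct and follows essentially the same route as the paper: the normal form from Lemma~\ref{fixT}, the exponent congruences forced by the pseudo-reflection (which is exactly the content of Proposition~\ref{psref}, here rederived inline and organized by the multiplier $m$ instead of by the invariance of $\{XYZ=0\}$), and the same affine computation at $(0:0:1)$ for the singularity and reducedness claims. The only loose point is the parenthetical justification that the two affine coefficients are coprime --- this follows from the degree-$d$ section having isolated zeros rather than from $\gcd(\widetilde{A},\widetilde{B},\widetilde{C})=1$ alone --- but the fact itself is correct and is used implicitly in the paper as well.
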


\begin{proof}
By Lemma \ref{fixT}, we have that $\F$ is defined by a homogeneous vector field
\[
v =  A(X,Y,Z)\partial_X + \lambda^2 A(Y,Z,X)\partial_Y + \lambda A(Z,X,Y)\partial_Z
\]
where $\lambda^3 = 1$, since $T\in \aut(\F)$. If we write $\varphi(X:Y:Z) = (X:Y:\zeta Z)$, $\zeta^n=1$, then 
\begin{align*}
T\circ \varphi\circ T^2(X:Y:Z) & = (X:\zeta Y: Z),\\
T^2\circ \varphi\circ T(X:Y:Z) & = (\zeta X: Y: Z) 
\end{align*}
also belong to $\aut(\F)$. 

One of  the lines $\{X=0\}$, $\{Y=0\}$ or $\{Z=0\}$ is $\F$--invariant if and only if the other two are $\F$--invariant, since they are permuted by $T$. We fall in two cases: whether these three lines are $\F$--invariant or not.

First case: $\{XYZ=0\}$ is $\F$--invariant.\\
By Proposition \ref{psref}, we can write
\[
v = X\widetilde{A}(X^n,Y^n,Z^n)\partial_X + \lambda^2 Y\widetilde{A}(Y^n,Z^n,X^n)\partial_Y + \lambda Z\widetilde{A}(Z^n,X^n,Y^n)\partial_Z.
\]
In particular, $\widetilde{A}(X,Y,Z)$ is a homogeneous polynomial of degree $k$, that satisfies $d=nk+1$. Hence,
\[
n \mid (d-1).
\]
 
Second case: $\{XYZ=0\}$ is not $\F$--invariant.\\
By  Proposition \ref{psref}, we can write
\begin{align*}
v  = \, &  Y^{n-1}Z^{n-1}\widetilde{A}(X^n,Y^n,Z^n)\partial_X + \lambda^2 X^{n-1}Z^{n-1}\widetilde{A}(Y^n,Z^n,X^n)\partial_Y \\ 
& + \lambda X^{n-1}Y^{n-1}\widetilde{A}(Z^n,X^n,Y^n)\partial_Z.
\end{align*}
In particular, $\widetilde{A}(X,Y,Z)$ is a homogeneous polynomial of degree $k$ that satisfies $d+2=n(k+2)$. Hence,
\[
n \mid (d+2).
\]
On the affine chart $\{Z\neq 0\}$ with the standard coordinates $(x,y) = (X/Z,Y/Z)$, $v$ is written as
\begin{align*}
v = \, & y^{n-1}\left(\widetilde{A}(x^n,y^n,1)- x^n\lambda^2\widetilde{A}(1,x^n,y^n)\right)\partial_x \\
& + \lambda x^{n-1}\left(\lambda\widetilde{A}(y^n,1,x^n)- y^n\widetilde{A}(1,x^n,y^n)\right)\partial_y.
\end{align*}
If the singularity at $(0:0:1)$ is reduced then $n=2$ and 
\[
\widetilde{A}(0,1,0)\widetilde{A}(0,0,1)\neq 0. 
\]
\end{proof}

To conclude this section we state a theorem combining the results
that we have proved so far.

\begin{theorem}\label{bdp2}
Let $\F$ be a foliation in $\PP^2$, of degree $d\geq 3$, such that $\aut(\F)$ is finite and imprimitive. That is,  $\aut(\F)$ leaves invariant the union of three lines, $L_1$, $L_2$ and $L_3$,  in general position (meeting in three distinct points). If these lines are not $\F$--invariant and support at most reduced singularities, then 
\[
|\aut(\F)| \leq 3(d^2+d+1)
\]
\end{theorem}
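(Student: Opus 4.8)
The plan is to exploit Blichfeldt's classification (Theorem~\ref{bli}) together with the Dolgachev--Iskovskikh refinement (Theorem~\ref{trimp}) to reduce an imprimitive finite $\aut(\F)$ to one of the explicit normal forms of type $(A)$, $(B)$, $(C)$ or $(D)$, and then in each case to measure the order against the tangency data of $\F$ with the three invariant lines. Since the three lines $L_1,L_2,L_3$ are not $\F$--invariant and the hypothesis says $\aut(\F)$ leaves their union invariant, after a linear change of coordinates we may take $L_1=\{X=0\}$, $L_2=\{Y=0\}$, $L_3=\{Z=0\}$, so that $\aut(\F)$ is a subgroup of the group generated by the diagonal torus, $T$ and $R$. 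If $\aut(\F)$ fixes one of the three intersection points, it is of type $(A)$ or $(B)$; otherwise it permutes the three points and hence contains $T$, so it is of type $(C)$ or $(D)$.

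In the fixed-point case one uses the normal subgroup $H$ of diagonal elements, which has index $1$ or $2$ (index $2$ only when a non-diagonal involution of the form $(X:Y:Z)\mapsto(Y:X:Z)$ is present, as in Proposition~\ref{stab}). For the diagonal part $H$, both $\{X=0\}$ and $\{Y=0\}$ are $H$--invariant and not $\F$--invariant, so Proposition~\ref{fibtg} (or directly Proposition~\ref{psref} applied to the pseudo-reflections, together with the tangency bound $\tg(\F,L_i,p)\le\tg(\F,L_i)\le d$ coming from \eqref{tang}) shows that $H$ is cyclic with $|H|$ dividing a quantity of size at most $d^2$, and in fact the analysis of Theorem~\ref{p2reg} / Proposition~\ref{p2A} gives $|H|\le d^2-1<d^2+d+1$. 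Multiplying by the index $\le 2$, and since $2(d^2-1)\le 3(d^2+d+1)$ for $d\ge 3$, the bound follows in this case.

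For the transitive case we invoke Theorem~\ref{trimp}: $\aut(\F)$ is of type $(C1)$, $(C2)$, $(D1)$ or $(D2)$, so it has an abelian normal subgroup $N$ with $\aut(\F)/N$ isomorphic to $\Z/3\Z$ or $\mathcal S_3$, i.e.\ $|\aut(\F)|=|N|$ or $3|N|$ or $6|N|$. Here $N$ is a diagonal group leaving each $L_i$ invariant; since no $L_i$ is $\F$--invariant, restricting to the pseudo-reflection directions and applying Proposition~\ref{psrefT} (second case: $\{XYZ=0\}$ not $\F$--invariant, reduced singularities force $n=2$ for any diagonal pseudo-reflection in $\aut(\F)$) forces every diagonal pseudo-reflection in $\aut(\F)$ to have order $2$. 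Combined with the structure of types $(C1)$--$(D2)$ — where $N$ is generated by such pseudo-reflections (or their conjugates) — this pins $N$ down: in the $(\Z/n\Z)^2$ cases one gets $n=2$ hence $|N|\le 4$, and in the $(\Z/n\Z\times\Z/m\Z)$ cases one gets $m\mid 2$, $k\le 3$ hence $|N|\le 12$, so $|\aut(\F)|\le 6\cdot 12=72$. A final bookkeeping step checks $72\le 3(d^2+d+1)$, which holds already for $d\ge 4$ and must be verified directly for $d=3$ (where $3(d^2+d+1)=39$, so one needs the sharper count $|N|\le 4$ in the genuinely relevant subcases — the $k=3$ case of $(C2)$/$(D2)$ requires $s^2-s+1\equiv0\pmod 3$, i.e.\ $s=2$, and one checks the resulting foliation forces a larger degree or non-reduced singularities).

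The main obstacle is the last case-by-case reconciliation: one must make sure that whenever $\aut(\F)$ is large among types $(C)$--$(D)$, either the hypothesis (lines not $\F$--invariant, singularities reduced) is violated, or the degree $d$ is large enough that $3(d^2+d+1)$ absorbs the order. Concretely, the delicate point is controlling the mixed-torus cases $(C2)$/$(D2)$ with $k=3$, where $|N|$ can a priori reach $3m$; here one has to use Proposition~\ref{psrefT} carefully — the pseudo-reflection $S_3$ has order $k=3$, and the conclusion ``reduced only if $n=2$'' shows this configuration is incompatible with the reducedness hypothesis, eliminating it. Everything else is a routine divisibility-and-inequality check using \eqref{tang}, \eqref{gsv} and the propositions of Section~3.
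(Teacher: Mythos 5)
Your overall strategy (Blichfeldt plus Dolgachev--Iskovskikh, then case-by-case bounds against tangency data) is the same as the paper's, and your treatment of the fixed-point types $(A)$ and $(B)$ is essentially correct. But your handling of the transitive case contains a genuine error that invalidates the proof.

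The problem is your claim that in types $(C)$ and $(D)$ the abelian normal subgroup $N$ is ``generated by pseudo-reflections (or their conjugates),'' from which you conclude $|N|\le 12$ and hence $|\aut(\F)|\le 72$. This is false for type $(C2)$ in the subcase $k=n$ (equivalently $m=1$): there $N$ is cyclic, generated by $S_4(X:Y:Z)=(l^sX:lY:Z)$, which is \emph{not} a pseudo-reflection, and $N$ need not contain any nontrivial pseudo-reflection at all, so Proposition~\ref{psrefT} gives you no control over $n$. Your auxiliary claims are also off: $S_3$ has order $m=n/k$, not $k$, and there is no a priori bound $k\le 3$ in type $(C2)$ (that constraint belongs to $(D2)$ only). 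The decisive counterexample to your conclusion is the Jouanolou foliation $\mathcal{J}_d$, which satisfies every hypothesis of the theorem, is of type $(C2)$ with $n=k=d^2+d+1$ and $m=1$, and has $|\aut(\mathcal{J}_d)|=3(d^2+d+1)$ --- far larger than $72$ for $d\ge 5$. Indeed the bound $3(d^2+d+1)$ is sharp precisely because of this family, so any argument yielding a uniform constant bound in the transitive case must be wrong.

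The missing idea is the following: in the $k=n$ subcase the cyclic normal subgroup of index $3$ is diagonal, hence fixes the three coordinate points, so the fixed-point analysis (Theorem~\ref{p2reg} if those points are regular, Proposition~\ref{p2A} if one is a reduced singularity) applies to \emph{that subgroup} and bounds its order by $d^2+d+1$; multiplying by the index $3$ gives $|\aut(\F)|\le 3(d^2+d+1)$. This is exactly where both the factor $3$ and the quantity $d^2+d+1$ in the final bound come from. The remaining transitive subcases ($k<n$, which forces $m=2$ via Proposition~\ref{psrefT}, and the $(D)$ types) then require the finer vector-field computations of the paper (yielding $6(d-2)$ and $3d^2-6d+12$), not a blanket order-$2$ argument.
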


\begin{proof}
Up to a linear change of coordinates, we may suppose that $L_1$, $L_2$ and $L_3$ are $\{X=0\}$, $\{Y=0\}$ and $\{Z=0\}$ and $\aut(\F)$ is in one of the forms in Theorem \ref{bli} from (A) to (D). 

First case: $\aut(\F)$ is of type (A).\\ 
The points $(0:0:1)$, $(0:1:0)$ and $(1:0:0)$ are regular or at least one of them is a singularity. If they are regular, by Theorem \ref{p2reg}, 
\begin{equation}\label{tA1}
|\aut(\F)| \leq d^2+d+1.
\end{equation}
If at least one of these points belongs to $\sing(\F)$, Proposition \ref{p2A} implies that
\begin{equation}\label{tA2}
|\aut(\F)| \leq d^2-1,
\end{equation}
since the $G$--invariant lines that contain this point are not $\F$--invariant. Comparing the inequalities (\ref{tA1}) and (\ref{tA2}) we have the bound for type (A):
\begin{equation}\label{tA}
|\aut(\F)| \leq d^2+d+1.
\end{equation}

Second case: $\aut(\F)$ is of type (B).\\
The group $\aut(\F)$ fixes a singularity at $(1:0:0)$ (that is reduced, by hypothesis) and has a subgroup of type (A) whose index is two. Hence we double the previous bound:
\begin{equation}\label{tB}
|\aut(\F)| \leq 2(d^2-1).
\end{equation}

Third case: $\aut(\F)$ is of type (C).\\ 
By Theorem \ref{trimp}, we fall in two cases: (C1) or (C2). For groups of type (C1),  
\[
\aut(\F) = (\Z/n\Z)^2 \rtimes (\Z/3\Z).
\]
generated by 
\[
T(X:Y:Z) = (Y:Z:X)
\]
and diagonal pseudo-reflections. Since $\{XYZ=0\}$ is not $\F$--invariant and supports only reduced singularities, Proposition \ref{psrefT} implies that $n=2$. Hence,
\begin{equation}\label{tC1}
|\aut(\F)| = 12.
\end{equation}

For groups of type (C2), we have that 
\[
\aut(\F)\simeq (\Z/n\Z\times\Z/m\Z) \rtimes (\Z/3\Z)
\]
generated by $T$,
\begin{align*}
\varphi(X:Y:Z) & = (l^kX: Y: Z ) \, {\rm and}\\
\psi(X:Y:Z) & = (l^sX: lY: Z ), 
\end{align*}
where $l^n=1$, $k>1$, $n=mk$ and $s^2-s+1 \equiv 0 \pmod{k}$. Observe that $\varphi$ is a pseudo-reflection of order $m$. We have two subcases: $k<n$ and $k=n$.

Under our hypotheses, if $k<n$ then Proposition \ref{psrefT} implies that $m=2$. Hence $n=2k$ and $\F$ is given by a vector field of the form
\begin{align*}
v  = \, &  YZA(X^2,Y^2,Z^2)\partial_X + \lambda^2 XZA(Y^2,Z^2,X^2)\partial_Y \\ 
& + \lambda XYA(Z^2,X^2,Y^2)\partial_Z.
\end{align*}
We have that $d\geq 3$ and $A$ satisfies
\[
d = 2\deg(A) + 2,
\]
which implies that $\deg(A) \geq 1$. Since $\{X=0\}$ is not $\F$--invariant, $X$ cannot divide $A$. Then $A(X^2,Y^2,Z^2)$ has monomials $gY^{d-2}$ and $hZ^{d-2}$ for some $g,h\in \C$, with $g\neq 0$ or $h\neq0$. The vector field
\[
w = \left(gY^{d-1}Z + hYZ^{d-1}\right)\partial_X + \lambda^2\left(gZ^{d-1}X + hZX^{d-1}\right)\partial_Y +\lambda \left(gX^{d-1}Y + hXY^{d-1}\right)\partial_Z
\]
is also invariant by $\aut(\F)$. Suppose that $g\neq 0$. Taking the pushforward of $w$ by $\psi$ we see that 
\[
s+1-d \equiv s-1 \equiv s-sd-1 \pmod{n}.
\]
Hence, $n$ divides $d-2$. Therefore,
\begin{equation}\label{tC21}
|\aut(\F)| = 3nm= 6n \leq 6(d-2).
\end{equation}
If $g=0$, then $h\neq 0$. Taking the pushforward of $w$ by $\psi$ we see that 
\[
s-1 \equiv 1-s(d-1) \equiv -s-d+1 \pmod{n}.
\]
Then we have that $d\equiv 2(1-s)  \pmod{n}$. Since $d = 2\deg(A) + 2$ and $n=2k$,
\[
\deg(A) \equiv -s \pmod{k},
\]
which implies that
\[
\deg(A)^2+\deg(A)+1 \equiv s^2-s+1 \equiv 0 \pmod{k}.
\]
Therefore,
\begin{equation}\label{tC21a}
|\aut(\F)| = 3nm= 12k \leq 12\left(\deg(A)^2+\deg(A)+1\right) = 3d^2-6d+12 
\end{equation}

Now, suppose that $k=n$. We have that 
\[
\aut(\F)\simeq (\Z/n\Z) \rtimes (\Z/3\Z),
\]
which is generated by $T$ and
\[
\psi(X:Y:Z) = (l^sX: lY: Z ),
\]
where $l^n=1$ and $s^2-s+1 \equiv 0 \pmod{n}$. Hence, $\aut(\F)$ has a (normal) cyclic subgroup of index 3. By the inequality (\ref{tA}), we have that
\begin{equation}\label{tC22}
|\aut(\F)| \leq 3(d^2+d+1).
\end{equation}
Comparing (\ref{tC1}), (\ref{tC21}), (\ref{tC21a}) and (\ref{tC22}), we have:
\begin{equation}\label{tC}
|\aut(\F)| \leq 3(d^2+d+1).
\end{equation}

Fourth case: $\aut(\F)$ is of type (D).\\ 
There exist two cases described by Theorem \ref{trimp}: (D1) and (D2). If $\aut(\F)$ is of type (D1), it has a subgroup of type (C1) whose index is two. It follows from (\ref{tC1}) that
\begin{equation}\label{tD1}
|\aut(\F)| = 24 < 3(d^2+d+1),
\end{equation}
since $d\geq3$.

Now suppose that $\aut(\F)$ is of type (D2). Then it has a subgroup $G$ of type (C2) where $k=3$ and $s=2$. We have seen in the third case that $n=k=3$ or $n=2k=6$. If $n=k=3$, then
\begin{equation}\label{tD21}
|\aut(\F)| = 2|G| =\frac{6n^2}{k}= 18 < 3(d^2+d+1),
\end{equation}
since $d\geq3$. If $n=2k=6$, then $G$ has order $36$. Applying (\ref{tC21a}) we obtain $d\geq 5$. Therefore,
\begin{equation} \label{tD22}
|\aut(\F)| = 2|G| = \frac{6n^2}{k}=  72 < 3(d^2+d+1).
\end{equation}

Comparing the inequalities (\ref{tA}), (\ref{tB}), (\ref{tC}), (\ref{tD1}), (\ref{tD21}) and (\ref{tD22}), we conclude that
\[
|\aut(\F)| \leq 3(d^2+d+1).
\]
\end{proof}

We will see that this bound is sharp. It turns out that it is
achieved by the family of examples given by Jouanoulou in \cite{J}.

\begin{example}
Let $\mathcal{J}_d$ be the Jouanolou's foliation in $\PP^2$, of degree $d\geq 3$, which is given by 
\[
v = Z^d\partial_X  + X^d\partial_Y + Y^d\partial_Z.
\]
The automorphism group of $\mathcal{J}_d$ is 
\[
\aut(\mathcal{J}_d) \simeq \Z/(d^2 +d + 1)\Z\rtimes \Z/3\Z,
\]
generated by 
\begin{align*}
T(X:Y:Z) & = (Y:X:Z),\\
\varphi(X:Y:Z) & = (X:lY:l^{d+1}Z), 
\end{align*}
where $l^{d^2+d+1}=1$, with $l$ primitive. The group $\aut(\mathcal{J}_d)$ is of type (C2), with $n=k=d^2+d+1$ and $s=-d$. 
\end{example}
Observe that, under the conditions of Theorem \ref{bdp2}, we obtain 
a linear bound on $K_{\F}^2$. Indeed,
in this case we get 
\[
|\aut(\F)| \leq 3 (d^2+d+1)\leq 21 K_{\F}^2,
\]
since $K_{\F}=\mathcal{O}_{\PP^2}(d-1)$.

\section{Bounds for Foliations on Geometrically Ruled Surfaces.}

A geometrically ruled surface is a surface $X$ together with a
surjective holomorphic map $\pi \colon X \longrightarrow C$ onto a smooth
compact curve $C$ such that the fiber $\pi^{-1}(y)$ is isomorphic to
$\PP^1$, for every $y\in C$.

For every geometrically ruled surface $\pi \colon X \longrightarrow C$,
there exists a rank two vector bundle $E$ over $C$ such that $X\simeq
\PP(E)$. $E$ is not uniquely determined but, if $E^{\prime}$ is
another vector bundle over $C$ such that $X\simeq \PP(E^{\prime})$,
then $E\simeq E^{\prime} \otimes L$ for some line bundle $L$ over
$C$.

Let $C_0\subset X$ be (the image of) a section. Then
\[
{\rm Pic}(X) \simeq \mathbb{Z}\cdot C_0 \oplus \pi^{\ast}{\rm
Pic}(C)
\]
and
\[
{\rm Num}(X) \simeq \mathbb{Z}\cdot C_0 \oplus \mathbb{Z}\cdot f, 
\]
where $f$ is the class of a fiber. The self-intersection numbers of the sections are bounded below and its minimum defines an invariant $e$ of $X$, defined by 
\[
e \ceq - \min\{D^2\mid D \ {\rm is \, (the \, image \, of)\, a \,
section \, of \,} \pi\}.
\]
A section $D$ is called minimal if $D^2 = -e$. We will assume that
the generator $C_0$ of ${\rm Num}(X)$ is the class of a minimal
section.

Equivalently, we can define $e$ by the line sub-bundles of $E$. Indeed, there exists a bijection between line subbundles of $E$ and sections of $\pi$, see \cite[Lemma 1.14]{MARC}. The degree of these
subbundles are bounded above and the maximum is exactly $e$.

\begin{definition}
If $L$ is a divisor on $X$, the class of $L$ will also be denoted $L$
and will be expressed by $L \equiv aC_0 + bf$, where $\equiv$ stands for
numerical equivalence. We will call $(a,b)$ the bidegree of $L$. A
foliation $\F$ on $X$ has bidegree $(a,b)$ if $K_{\F} \equiv aC_0 +
bf$.
\end{definition}

By applying the adjunction formula it follows that the canonical class
of $X$ is $K_X \equiv -2C_0 + (2g-2-e)f$. The tangent bundle of $X$
has a line subbundle $\tau$ defined by the kernel of the jacobian
of $\pi$,
\[
0 \longrightarrow \tau \longrightarrow T_X \longrightarrow
\pi^{\ast}T_C = N \longrightarrow 0, 
\]
where $N$ is the normal bundle of the ruling. Therefore, we have $K_X \simeq
\tau^{\ast}\otimes N^{\ast}$. From $\pi^{\ast}T_C = N$, we know that
$N \equiv (2-2g)f$, hence $\tau \equiv 2C_0 + ef$.

\begin{proposition}\cite[V, Propositions 2.20, 2.21]{HART}\label{nefp1}
Let $X$ be a ruled surface over a curve $C$ with invariant $e$. Let $Y \equiv aC_0 + bf$ be an irreducible curve different from $C_0$ or a fiber. Then:
\begin{itemize}
\item[a)] if $e\geq 0$, then $a>0$ and $b\geq ae$;
\item[b)] if $e<0$, then either $a=1$, $b\geq 0$ or $a\geq 2$, $2b\geq ae$.
\end{itemize}
A divisor $D \equiv aC_0 + bf$ is nef if and only if
\begin{itemize}
\item[a)] $e\geq 0$, $a\geq0$ and $b\geq ae$; 
\item[b)] $e<0$, $a\geq 0$ and $2b\geq ae$.
\end{itemize}
$D$ is ample when the inequalities are strict.
\end{proposition}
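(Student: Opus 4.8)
This is a classical result of Hartshorne, \cite[V, Propositions 2.20 and 2.21]{HART}, and the plan is to reconstruct its proof, which splits naturally into two parts: (i) the constraints on the numerical class of an irreducible curve, and (ii) the nef and ample criteria, which then follow formally from (i) via the Nakai--Moishezon criterion. All intersection numbers are computed in $\mathrm{Num}(X)=\mathbb{Z}C_0\oplus\mathbb{Z}f$ using $C_0^2=-e$, $C_0\cdot f=1$, $f^2=0$.

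\emph{The bounds on irreducible curves.} Let $Y\equiv aC_0+bf$ be irreducible, with $Y\neq C_0$ and $Y$ not a fibre. Since $Y$ is not contained in a fibre, $\pi|_Y\colon Y\to C$ is a finite surjective morphism, so $a=Y\cdot f=\deg(\pi|_Y)\geq 1$. Because $Y$ and $C_0$ are distinct irreducible curves, $Y\cdot C_0\geq 0$, i.e.\ $b\geq ae$; when $e\geq 0$ this is precisely part (a). Assume now $e<0$. If $a=1$, then $\pi|_Y$ is a finite morphism of degree one onto the smooth, hence normal, curve $C$, so it is an isomorphism and $Y$ is the image of a section; by the very definition of the invariant $e$ we get $Y^2=-e+2b\geq -e$, hence $b\geq 0$. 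The case $a\geq 2$, $e<0$ is the delicate one: here one combines Nagata's inequality $e\geq -g$ with the analysis of the linear systems $|C_0+mf|$ to rule out $2b<ae$, and for this I would follow the argument of \cite[V, Prop.~2.21]{HART}. Note that for $e<0$ all sub-cases are subsumed in the single inequality $2b\geq ae$, valid for every irreducible $Y\notin\{C_0,f\}$ (when $a=1$ one has $2b\geq 0>e=ae$).

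\emph{The nef and ample criteria.} From $D\cdot f=a$, $D\cdot C_0=b-ae$ and $D^2=a(2b-ae)$ the ``only if'' directions are immediate. If $D$ is nef then $a=D\cdot f\geq 0$ and $b-ae=D\cdot C_0\geq 0$, and moreover $D^2=a(2b-ae)\geq 0$ because a nef divisor on a surface has non-negative self-intersection; for $e\geq 0$ this yields $a\geq 0$, $b\geq ae$, and for $e<0$ it yields $a\geq 0$ together with $2b\geq ae$ (the last inequality coming from $D^2\geq 0$ when $a>0$, and being trivial when $a=0$ since then $b\geq 0=ae$). Conversely, assuming the stated numerical conditions, one checks $D\cdot Y\geq 0$ for every irreducible curve $Y$: for $Y\in\{f,C_0\}$ this is direct (using $b-ae\geq\tfrac12 ae-ae=-\tfrac12 ae\geq 0$ when $e<0$), and for any other irreducible $Y\equiv a'C_0+b'f$ part (i) supplies $a'\geq 1$ and either $b'\geq a'e$ (if $e\geq 0$) or $2b'\geq a'e$ (if $e<0$); substituting these together with $b\geq ae$, resp.\ $2b\geq ae$, into $D\cdot Y=a'b+ab'-aa'e$ gives $D\cdot Y\geq 0$ after a one-line estimate (the term $aa'e$ cancels). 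The ample case is identical with all inequalities made strict, invoking Nakai--Moishezon, with minor care in the boundary case $e=0$ (replacing ``$b>0$'' by ``$b\geq 1$'' to keep the estimates strict).

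\emph{Main obstacle.} Everything above is elementary intersection theory on $\mathrm{Num}(X)$ plus Nakai--Moishezon, \emph{except} the bound $2b\geq ae$ for irreducible curves with $a\geq 2$ in the case $e<0$. This is the only place where the finer structure of ruled surfaces of negative invariant (Nagata's inequality, effectivity of $|C_0+mf|$) is genuinely required, and it is exactly what forces the ``$2b$'' rather than ``$b$'' in part (b); this single step I would take directly from \cite{HART}.
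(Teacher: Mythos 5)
The paper does not prove this proposition at all --- it is quoted directly from Hartshorne with the citation \cite[V, Propositions 2.20, 2.21]{HART} --- so there is no internal argument to compare against. Your reconstruction is correct in every step you actually carry out: the identities $Y\cdot f=a\geq 1$, $Y\cdot C_0=b-ae\geq 0$, the section argument for $a=1$, $e<0$, and the Nakai--Moishezon bookkeeping via $D\cdot Y=ab'+a'b-aa'e$ and $D^2=a(2b-ae)$ all check out; the one genuinely nontrivial input (that an irreducible curve with $a\geq 2$ on a surface with $e<0$ satisfies $2b\geq ae$) is the step you explicitly defer to Hartshorne, which is exactly what the paper itself does for the entire statement.
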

If $e> 0$ and $D$ is an effective divisor whose support does not contain $C_0$, then $D$ is nef. For a general effective divisor, $D\equiv nC_0 + E$ for some nef divisor $E$ and $n\geq 0$. If $e\leq0$ then every effective divisor is nef.

Now we will follow \cite{GOM} to recall some properties of foliations on geometrically ruled surfaces. However, our notation is different and we have refined some results. Let $\F$ be a foliation of bidegree $(a,b)$ on $X\longrightarrow C$
with invariant $e$ and $g(C) = g$. Then,
\begin{enumerate}
\item $\#\sing(\F) = c_2(T_X\otimes K_{\F}) = (a+1)(2b-ae + 2-2g)+ 2-2g$; \\
\item $N_{\F}^2 = (a+2)(2b-ae+4-4g)$; \\
\item $\tg(\F,F) = K_{\F}\cdot f + f^2 =a$, for not $\F$-invariant fiber $F$; \\
\item ${\rm Z}(\F,F)= \chi(F) + K_{\F}\cdot f = 2+a$ and ${\rm CS}(\F,F) = f^2 = 0$, for an $\F$-invariant fiber $F$.
\end{enumerate}
Since $c_2(T_X\otimes K_{\F})$ is the number of singularities of $\F$, it is a nonnegative integer. Hence 
\[
a \geq 0 \, {\rm and} \, 2b-ae \geq 2g-2,
\]
if $g\geq 1$. The last equality only holds if $g=1$ and $2b=ae$.

Suppose that $\F$ is not tangent to the fibers of the ruling. Let $\{U_i\}$ be a fine open cover of $X$ such that exist vector
fields $v_i$ defining $\F$ and $1$-forms $\omega_i$ defining the
ruling (regarded as a foliation). The holomorphic functions $g_i =
\omega_i(v_i)$ define a nontrivial divisor on $X$. This is the
tangency divisor between $\F$ and the ruling, denoted by $\tg(\F,
\tau)$. By construction, we have that
\[
\tg(\F, \tau) \sim K_{F} + N \equiv aC_0 + (b+2-2g)f
\]
and it is effective. In order to analyze $\tg(\F,\tau)$ we need a
technical lemma.

\begin{lemma}
Let $\F$ be a foliation on a smooth compact complex surface and
$C_1, \dots, C_k$ be disjoint smooth $\F$-invariant curves. If $D \sim
C_1+\dots + C_k$, then
\[
c_2(T_X(-\log D)\otimes K_{\F}) = \#\sing(\F) - \sum_{i=1}^{k}{\rm
Z}(\F, C_i).
\]
\end{lemma}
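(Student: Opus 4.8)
The plan is to compare $T_X(-\log D)$ with $T_X$ through the logarithmic (residue) exact sequence and then run a Chern-class computation, translating the outcome at the end with the index formulae recalled in Section~2. Since the $C_i$ are smooth and pairwise disjoint, the divisor $D = C_1 + \dots + C_k$ is \emph{smooth}, so $T_X(-\log D)$ is locally free of rank two and there is a short exact sequence
\[
0 \longrightarrow T_X(-\log D) \longrightarrow T_X \longrightarrow \bigoplus_{i=1}^{k} (\iota_i)_{\ast} N_{C_i/X} \longrightarrow 0,
\]
where $\iota_i\colon C_i \hookrightarrow X$ is the inclusion, $N_{C_i/X}$ the normal bundle, and the $i$-th component is restriction to $C_i$ followed by the projection $T_X|_{C_i}\to N_{C_i/X}$; exactness is immediate from the local description of $T_X(-\log D)$, which is generated by $z\partial_z$ and $\partial_w$ in coordinates $(z,w)$ with $C_i=\{z=0\}$. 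I would stress that $\F$-invariance of the $C_i$ plays no role here; it enters only in the final step.

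Next I would tensor the sequence with the line bundle $K_{\F}$ (which is flat, so exactness is preserved) and pass to total Chern classes, obtaining
\[
c\bigl(T_X(-\log D)\otimes K_{\F}\bigr)\cdot c\Bigl(\bigoplus_{i=1}^{k} (\iota_i)_{\ast}(N_{C_i/X}\otimes K_{\F}|_{C_i})\Bigr) = c\bigl(T_X\otimes K_{\F}\bigr).
\]
To compute the Chern classes of the torsion quotient I would avoid invoking Chern classes of coherent sheaves abstractly and instead use the projection formula $(\iota_i)_{\ast}(N_{C_i/X}\otimes K_{\F}|_{C_i}) = (\iota_i)_{\ast}\OO_{C_i}\otimes \OO_X(C_i)\otimes K_{\F}$ together with the Koszul resolution $0\to \OO_X(-C_i)\to \OO_X\to (\iota_i)_{\ast}\OO_{C_i}\to 0$, which yields a length-two locally free resolution by line bundles and hence $c_1 = [C_i]$, $c_2 = -K_{\F}\cdot C_i$ for each summand. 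Disjointness gives $[C_i]\cdot[C_j]=0$ for $i\neq j$, so the direct sum has $c_1 = [D]$ and $c_2 = -K_{\F}\cdot D$. Substituting this and $c_1(T_X\otimes K_{\F}) = -K_X + 2K_{\F}$ into the displayed identity and comparing the $c_2$ (degree-four) terms gives
\[
c_2\bigl(T_X(-\log D)\otimes K_{\F}\bigr) = c_2(T_X\otimes K_{\F}) + D^2 + K_X\cdot D - K_{\F}\cdot D.
\]

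Finally I would rewrite the right-hand side. By the Baum--Bott formula (\ref{BB}) one has $c_2(T_X\otimes K_{\F}) = \sum_{p}\mu(\F,p) = \#\sing(\F)$. Since each $C_i$ is $\F$-invariant, formula (\ref{gsv}) applies and gives ${\rm Z}(\F,C_i) = \chi(C_i) + K_{\F}\cdot C_i = -C_i^2 - K_X\cdot C_i + K_{\F}\cdot C_i$; summing over $i$ and using disjointness ($D^2 = \sum_i C_i^2$, $K_X\cdot D = \sum_i K_X\cdot C_i$, $K_{\F}\cdot D = \sum_i K_{\F}\cdot C_i$) yields $\sum_{i=1}^k {\rm Z}(\F,C_i) = -\bigl(D^2 + K_X\cdot D - K_{\F}\cdot D\bigr)$. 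Plugging both identities into the previous display gives exactly $c_2(T_X(-\log D)\otimes K_{\F}) = \#\sing(\F) - \sum_{i=1}^k {\rm Z}(\F,C_i)$. The one delicate point is the Chern-class bookkeeping for the torsion sheaves $(\iota_i)_{\ast}N_{C_i/X}$ and the attendant signs; I expect the resolution-plus-projection-formula route above to keep everything transparent, while the rest is formal manipulation with the Whitney formula and the Section~2 index formulae.
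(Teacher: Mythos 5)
Your proposal is correct and follows essentially the same route as the paper: the logarithmic residue sequence $0 \to T_X(-\log D) \to T_X \to \OO_D(D) \to 0$ (your $\bigoplus_i (\iota_i)_{\ast}N_{C_i/X}$ is exactly $\OO_D(D)$ for smooth disjoint components), a Whitney-formula computation of $c_2$, and the final substitution via the index formulae (\ref{BB}) and (\ref{gsv}). The only cosmetic difference is that you twist by $K_{\F}$ before extracting Chern classes (handling the torsion quotient via the Koszul resolution), whereas the paper first computes $c_1$ and $c_2$ of $T_X(-\log D)$ and then twists; both yield the same identity $c_2(T_X(-\log D)\otimes K_{\F}) = c_2(T_X\otimes K_{\F}) - \chi(D) - K_{\F}\cdot D$.
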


\begin{proof}
The curves $C_i$ being smooth and disjoint implies that $T_X(-\log
D)$ is locally free, by Saito's criterion, and fits into an exact
sequence:
\[
o \longrightarrow T_X(-\log D) \longrightarrow T_X \longrightarrow
\OO_D(D) \longrightarrow 0,
\]
see \cite{Liao} for details. Taking the total Chern class we have
that
\begin{align*}
c_1(T_X(-\log D)) & = -K_X-D,\\
c_2(T_X(-\log D)) & = c_2(X)+K_X\cdot D + D^2.
\end{align*}
Hence, by direct calculation we can show that
\begin{align*}
c_2(T_X(-\log D)\otimes K_{\F}) & = c_2(T_X(-\log D)) + c_1(T_X(-\log D))\cdot K_{\F} + K_{\F}^2 \\
& = c_2(X) +K_X\cdot D + D^2 + -(K_X+D)\cdot K_{\F} + K_{\F}^2 \\
& = c_2(T_X\otimes K_{\F}) -\chi(D) -K_{\F}\cdot D.
\end{align*}
The lemma follows from the index formulae (\ref{BB}) and (\ref{gsv}).
\end{proof}

In  \cite{CM} the authors proved   more general results for 
an one-dimensional foliation $\F$ on a compact complex manifold $X$, of
dimension $n$, with isolated singularities and an $\F$--invariant
 hypersurface $S$. They proved  under mild hypotheses on the singularities that lie
on $S$ that 
\[
\int_X c_n(T_X(-\log S)\otimes  K_{\F}) = \sum_{p \in
\sing(\F)\cap\{X\backslash S\}}\mu(\F,p)
\]
The restriction that is imposed is the vanishing of a logarithmic
index ${\rm Ind}_{log S, p}$ that in our case is
\[
{\rm Ind}_{log S, p} = \mu(\F,p) - {\rm Z}(\F,S,p).
\]
For reduced singularities, direct calculation shows that the
vanishing of this index holds for any separatrix of a nondegenerate 
singularity and for the weak separatrix of a saddle-node (when it
converges). However, it fails for the strong separatrix. We state a particular case of this result that will serve our
purposes:

\begin{lemma}\label{logc2}
Let $X$ be a compact complex surface and let $\F$ be foliation on
$X$. If $S$ is a smooth $\F$--invariant curve such that $\sing(\F)
\subset S$ and $\mu(\F,p) = {\rm Z}(\F,S,p)$ for all $p\in
\sing(\F)$, then
\[
c_2(T_X(-\log S)\otimes K_{\F})=0.
\]
\end{lemma}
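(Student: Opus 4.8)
The statement is essentially a corollary of the discussion immediately preceding it, so the proof I plan to give is short. The cleanest route is to apply the formula of \cite{CM} quoted above: for a one-dimensional foliation $\F$ with isolated singularities on a surface $X$ and a smooth $\F$--invariant curve $S$,
\[
\int_X c_2(T_X(-\log S)\otimes K_{\F}) = \sum_{p\in\sing(\F)\cap(X\setminus S)}\mu(\F,p),
\]
provided the logarithmic index ${\rm Ind}_{\log S,p}=\mu(\F,p)-{\rm Z}(\F,S,p)$ vanishes at every $p\in\sing(\F)\cap S$. Under our hypotheses this vanishing is exactly the assumption $\mu(\F,p)={\rm Z}(\F,S,p)$, so the formula applies; and since $\sing(\F)\subset S$, the index set $\sing(\F)\cap(X\setminus S)$ is empty, so the right-hand side is $0$.

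For a self-contained argument I would instead reuse the Chern class computation from the preceding lemma. Smoothness of $S$ makes $T_X(-\log S)$ locally free by Saito's criterion, and the exact sequence
\[
0\longrightarrow T_X(-\log S)\longrightarrow T_X\longrightarrow \OO_S(S)\longrightarrow 0
\]
yields, exactly as before,
\[
c_2(T_X(-\log S)\otimes K_{\F})=c_2(T_X\otimes K_{\F})-\chi(S)-K_{\F}\cdot S.
\]
Now I would substitute the Baum--Bott formula (\ref{BB}), $c_2(T_X\otimes K_{\F})=\sum_{p\in\sing(\F)}\mu(\F,p)$, and the GSV formula (\ref{gsv}), $\chi(S)+K_{\F}\cdot S={\rm Z}(\F,S)=\sum_{p\in\sing(\F)\cap S}{\rm Z}(\F,S,p)$. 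Because $\sing(\F)\subset S$ the two sums range over the same set, so
\[
c_2(T_X(-\log S)\otimes K_{\F})=\sum_{p\in\sing(\F)}\bigl(\mu(\F,p)-{\rm Z}(\F,S,p)\bigr),
\]
which is $0$ term by term by hypothesis.

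There is no real obstacle; the only point requiring a word of care is that the preceding lemma was phrased for a disjoint union $C_1,\dots,C_k$ of smooth $\F$--invariant curves, so one should observe that a smooth $\F$--invariant curve $S$ is just the case $k=1$ (and, more generally, the computation is insensitive to whether $S$ is connected, since only disjointness of its components --- automatic for a smooth curve --- is used to keep $T_X(-\log S)$ locally free and to apply Saito's criterion).
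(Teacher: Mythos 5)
Your proposal is correct and follows essentially the same route as the paper, which states this lemma as a particular case of the residue formula of \cite{CM} and of the preceding lemma on $c_2(T_X(-\log D)\otimes K_{\F})$. Your self-contained computation is just that lemma specialized to $D=S$ (the components of a smooth curve are automatically disjoint, as you note), combined with the hypotheses $\sing(\F)\subset S$ and $\mu(\F,p)={\rm Z}(\F,S,p)$, so nothing further is needed.
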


This result applied to the analysis of the invariant fibers for a
foliation leads to the following classification:

\begin{theorem}\label{p1tg}
Let $X\longrightarrow C$ be a $\PP^1$-bundle over a smooth curve
whose genus is $g$ and let $e$ denote the invariant of $X$. Let $\F$
be a foliation on $X$ of bidegree $(a,b)$ such that $\mu(\F,p) =
{\rm Z}(\F,F,p)$ for every $p$ in a $\F$--invariant fiber $F$. Then
one of the following is true:
\begin{enumerate}
\item $\F$ is tangent to the ruling;
\item $a=0$ and $\F$ is Riccati;
\item $b=e=0$, $a>0$, $g=1$ and $\F$ is, up to a unramified cover, a regular foliation on $C\times \PP^1$;
\item $\F$ is a foliation on $\PP^1\times \PP^1$ with $a>0$, $b=0$ and all singularities lie on the two $\F$-invariant fibers. In particular, $\F$ is Riccati with respect to the other projection;
\item $e>0$, $b=(a+1)e$ and all singularities lie on $\F$-invariant fibers;
\item $\F$ has a singularity that lies on a fiber not $\F$-invariant.
\end{enumerate}
\end{theorem}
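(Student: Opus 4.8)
The strategy is a case analysis driven by the numerical inequalities satisfied by the bidegree $(a,b)$ together with the structure of the tangency divisor $\tg(\F,\tau)$, and by applying Lemma \ref{logc2} when all singularities can be concentrated on invariant fibers. First I would dispose of the trivial alternatives: if $\F$ is tangent to the ruling we are in case (1), and if $a=0$ then every non-$\F$-invariant fiber satisfies $\tg(\F,F)=a=0$, which is exactly the definition of a Riccati foliation — case (2). So assume from now on $a>0$ and that $\F$ is not tangent to the fibers; by the computation $\#\sing(\F) = (a+1)(2b-ae+2-2g)+2-2g \geq 0$ we get $2b-ae\geq 2g-2$, with equality forcing $g=1$ and $2b=ae$.

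Next I would analyze the location of $\sing(\F)$. The effective divisor $\tg(\F,\tau)\equiv aC_0+(b+2-2g)f$ decomposes into a ``horizontal'' part (components dominating $C$) and a ``vertical'' part (supported on fibers); the vertical components of $\tg(\F,\tau)$ are precisely the $\F$-invariant fibers, each contributing a copy of $f$. The dichotomy is: either $\sing(\F)$ is entirely contained in $\F$-invariant fibers, or some singularity lies on a non-invariant fiber — the latter being case (6). In the former situation, let $S$ be the (disjoint, smooth) union of the $\F$-invariant fibers containing all singularities. One checks via the reduced-singularity hypothesis $\mu(\F,p)={\rm Z}(\F,F,p)$ that Lemma \ref{logc2} applies, giving $c_2(T_X(-\log S)\otimes K_{\F})=0$; combined with the preceding lemma this forces $\#\sing(\F) = \sum {\rm Z}(\F,F_i) = k(a+2)$ where $k$ is the number of invariant fibers. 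Equating with the Baum–Bott count yields $(a+1)(2b-ae+2-2g) + 2-2g = k(a+2)$, a single Diophantine relation in $a,b,e,g,k$.

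The heart of the argument is extracting cases (3)–(5) from this relation together with the nef/ampleness constraints of Proposition \ref{nefp1} and the bound $k \le N_{\F}\cdot f$-type estimates (an $\F$-invariant fiber $F$ contributes $F$ to $\tg(\F,\tau)$, and $\tg(\F,\tau)\cdot f = a \cdot f^2 + \cdots$; more precisely the number of invariant fibers is bounded by the $C_0$-free part of $\tg(\F,\tau)$ when $e>0$, forcing $b+2-2g \ge$ something). Pushing the bookkeeping: if $b=0$ one lands in the product-surface situations — $g=1,e=0$ giving case (3) after passing to the étale cover that trivializes the ruling, or $C=\PP^1$ giving case (4) where the two invariant fibers carry all singularities and $\F$ is Riccati for the other projection; if $b>0$ the inequality $2b-ae\ge 2g-2$ and the fiber count conspire to force $e>0$ and $b=(a+1)e$, which is case (5). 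I expect the main obstacle to be case (3): verifying that when $b=e=0$, $g=1$, $a>0$ the foliation has empty singular set (so it is genuinely regular) and that the finite étale base change which splits $X$ as $C'\times\PP^1$ carries $\F$ to a regular foliation — this requires knowing that a foliation of bidegree $(a,0)$ on such a surface with no singularities on non-invariant fibers actually has no singularities at all, which follows from $\#\sing(\F)=(a+1)(2\cdot 0 - 0 + 0) + 0 = 0$, so in fact this case is cleaner than it first appears and the real care is only in the covering argument. The genuinely delicate point is ensuring the list is exhaustive, i.e. that the nef constraints of Proposition \ref{nefp1} leave no room for a surface with $e<0$ and all singularities on invariant fibers other than the product $\PP^1\times\PP^1$ case.
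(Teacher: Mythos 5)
Your outline reproduces the paper's skeleton faithfully: cases (1)--(2) from $\tg(\F,F)=a$, the dichotomy on whether $\sing(\F)$ sits inside the invariant fibers (case (6) otherwise), and the identity $(a+1)(2b-ae+2-2g)+2-2g=s(a+2)$ obtained from Lemma \ref{logc2} together with the lemma preceding it. The problem is that the step you describe as ``pushing the bookkeeping'' is where essentially all of the content of cases (3)--(5) lives, and the mechanism you propose for it --- the Diophantine relation plus a bound on the number $s$ of invariant fibers --- cannot deliver the conclusion. The relation has $s$ as a free parameter; what it really says, once rewritten, is that the horizontal part $\Delta=\tg(\F,\tau)-F_1-\dots-F_s$ of the tangency divisor is an effective divisor of class $aC_0+\frac{-a(b-ae-e)}{a+2}f$, and the whole argument is an effectivity/nefness analysis of $\Delta$ via Proposition \ref{nefp1}, not a count of fibers. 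Concretely, for $e>0$ one first shows that $C_0$ must lie in the support of $\Delta$ (otherwise $\Delta$ would be nef and Proposition \ref{nefp1} yields $0>-2e\geq 2b\geq ae>0$, absurd); since $C_0$ is then a component of the tangency divisor it is \emph{not} $\F$--invariant, so $\tg(\F,C_0)=b-(a+1)e\geq 0$, while writing $\Delta=mC_0+E$ with $E$ nef gives $0\geq -a(b-ae-e)\geq(a-m)(a+2)e\geq 0$, hence $m=a$ and $b=(a+1)e$. Your sketch never identifies $\tg(\F,C_0)\geq 0$ as the source of one of the two opposing inequalities, so the assertion that ``$b>0$ forces $e>0$ and $b=(a+1)e$'' is unsupported as written.

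Two further points in the same vein. On $\PP^1\times\PP^1$ your split into ``$b=0$'' and ``$b>0$'' silently discards $b=-1$, which the constraint $\#\sing(\F)\geq 0$ does permit a priori; the paper excludes it by observing that the fiber coefficient $\frac{-ab}{a+2}$ of $\Delta$ must be a (nonnegative) integer, i.e. $(a+2)\mid ab$. And the exhaustiveness worry you raise at the end (could $e<0$ occur with all singularities on invariant fibers?) is settled by the same nefness computation rather than by anything special to products: for $e\leq 0$ every effective divisor is nef, so applying Proposition \ref{nefp1} to $\Delta$ gives $ae\geq 2b$, which against $2b-ae\geq 2g-2\geq 0$ forces $g=1$, $2b=ae$ and ultimately $b=e=0$, i.e. case (3). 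So the missing ingredient throughout is one and the same: you must run the nef criterion on $\Delta$ itself, coupled with the tangency formula along the minimal section, not on the singularity count.
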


\begin{proof}
First suppose that $\F$ is not tangent to the ruling, then a
general fiber $F$ is not $\F$--invariant and $a= \tg(\F,F) \geq 0$.
Equality holds if and only if $\F$ is a Riccati foliation. This proves the cases  
$(1)$ and $(2)$.

Now, suppose that $a>0$. The generic transversality between $\F$ and a
general fiber implies that $\tg(\F, \tau)$ is a nontrivial effective
divisor. A fiber $F$ in the support of $\tg(\F, \tau)$ is
$\F$--invariant. Let $F_1, \cdots, F_s$ be the $\F$--invariant fibers and define
\begin{align*}
D & \ceq F_1 + \cdots + F_s \equiv sf, \\
\Delta & \ceq \tg(\F, \tau) -D \equiv aC_0 + (b + 2 -2g - s)f.
\end{align*}
By construction, the divisors $D$ and $\Delta$ are effective. 
Since $\F$  is neither Riccati or tangent to the ruling, the divisor $\Delta$ is not trivial.

Suppose that every singularity of $\F$ lies on an invariant fiber.
Then, by Lemma \ref{logc2}, we have 
\begin{align*}
0 & = c_2(T_X(-{\rm log}(D))\otimes K_{\F}) \\
& = (a+1)(2b-ae +2-2g) +2-2g -s(a+2)\\
& = (a+2)(b+2-2g-s) + a(b-ae-e).
\end{align*}
Hence, $(a+2)(b+2-2g-s) = - a(b-ae-e)$ and
\[
\Delta = K_{\F}+N-D \equiv aC_0 + \frac{- a(b-ae-e)}{a+2}f.
\]

If $e\leq 0$, every effective divisor is nef. It follows that
$-2a(b-ae-e) \geq (a+2)ae$, and this implies that
\[
0\geq e \geq 2b-ae \geq 2g-2 \geq 0,
\]
when $g\geq 1$. Hence, $2b=e=0$, $g=1$ and $\F$ is a regular foliation on surface ruled over an elliptic curve $C$. By \cite[Theorem 3.3]{GOM}, there exists an unramified cover $C\times \PP^1 \longrightarrow X$ and $\F$ lifts to a foliation described in \cite[Proposition 3.2]{GOM}. This proves the case $(3)$.

If $g=0$,  then $e\geq 0$ always. Indeed, a theorem due to Nagata says
that $e \geq -g$, see \cite[p.384]{HART} or \cite{MARC}. Hence,
$e=0$ and $0\geq 2b\geq -2$, which implies that $b=0,-1$ and
$X=\PP^1\times \PP^1$. On the other hand, $(a+2)$ divides $ab$,
since $\Delta$ is an effective integral divisor, then $b=0$ and,
consequently, $s=2$. In particular, $\F$ is Riccati with respect to
the other projection. This gives us the case $(4)$.

If $e>0$, then $C_0$ has to be contained in the support of $\Delta$. Indeed, if $C_0$ were not in the support of $\Delta$, then $\Delta$ would be nef, which implies that $- a(b-ae-e) \geq (a+2)ae$. Then
\[
0> -2e \geq 2b \geq ae >0,
\]
which is an absurd. The section $C_0$ being in the support of $\Delta$
means that $\F$ is regular at a general point $p\in C_0$ and the leaf that passes through $p$ is transverse to $C_0$. Thus
\[
0 \leq \tg(\F, C_0) = K_{\F}\cdot C_0 + C_0^2 = b-ae-e.
\]
On the other hand, $\Delta = mC_0 + E$, $E$ a nef divisor. Then
\[
E \equiv (a-m)C_0 + \frac{ -a(b-ae-e)}{a+2}f
\]
and we have that $(a-m)\geq 0$ and $-a(b-ae-e) \geq (a-m)(a+2)e$.
Hence
\[
0 \geq -a(b-ae-e) \geq (a-m)(a+2)e \geq 0.
\]
Consequently, $a=m$, $b=(a+1)e$, $\Delta = aC_0$ and there exist $s=b+2-2g$ invariant fibers. This proves case $(5)$.

If $\F$ does not fit in any of the previous cases, then it has a singularity on a generically transverse fiber.
\end{proof}

\subsection{Finite Automorphism Groups of Ruled Surfaces}

The automorphism groups of geometrically ruled surfaces have been
classified by Maruyama in \cite{MARA}. As one may expect, it is
closely related to automorphisms of rank two vector bundles over a
curve. Indeed, for a ruled surface $\pi\colon X \longrightarrow C$,
Maruyama proved that if either $C$ is irrational or $C$ is rational
and $X\not\simeq \PP^1 \times \PP^1$, then $\aut(X)$ fits in a exact
sequence
\[
1 \longrightarrow \aut_C(X) \longrightarrow \aut(X) \longrightarrow
\aut(C),
\]
where $\aut_C(X)$ is the subgroup that sends each fiber of $\pi$
onto itself. Alternatively, it is the automorphism group of $X$ seen
as a scheme over $C$. A lemma due to Grothendieck relates this
subgroup to the automorphism group of a vector bundle $E$ such that
$X\simeq \PP(E)$ by the following exact sequence:
\[
1 \longrightarrow \frac{\aut(E)}{\HH^0(C, \mathcal{O}_C^{\ast})}
\longrightarrow \aut_C(X) \longrightarrow \Delta \longrightarrow 1,
\]
where $\Delta = \{N \in {\rm Pic}(C) \mid E \simeq E\otimes N \}$
and, in our case, $\HH^0(C, \mathcal{O}_C^{\ast}) = \C^{\ast}$ acts
on $\aut(E)$ by rescaling. See \cite{G} for the proof.

Although Maruyama works in a arbitrary algebraically closed field,
we restrict ourselves to $\C$.

\begin{theorem}\cite[Theorem 2]{MARA}
Let $\pi\colon X \longrightarrow C$ be a ruled surface with invariant
$e$. Then:
\begin{enumerate}
\item If $e<0$, then $\aut_C(X) \simeq \Delta$.
\item If $e\geq 0$, $X$ is indecomposable and if $C_0$ is the unique minimal section, then
\[
\aut_C(X) \simeq \left\{ \left(\left(\begin{array}{cc}
1 & 0 \\
0 & 1
\end{array}\right),
\left(\begin{array}{cc}
1 & t_1 \\
0 & 1
\end{array}\right), \dots,
\left(\begin{array}{cc}
1 & t_r \\
0 & 1
\end{array}\right)\right) \,
\left|\begin{array}{c} t_i \in \C
\end{array} \right.
\right\}
\]
where $r = h^0(C,\OO_C(-\pi(C_0^2)) ) = h^0(C, \det(E)^{-1}\otimes
L_{C_0}^{\otimes 2})$, $X\simeq \PP(E)$.
\item If $X$ is decomposable and if $X$ does not carry two minimal sections, $C_0$ and $C_1$ such that $\pi(C_0^2) = \pi(C_1^2)$ (as divisor classes), then
\[
\aut_C(X) \simeq \left\{ \left(\left(\begin{array}{cc}
\alpha & 0 \\
0 & 1
\end{array}\right),
\left(\begin{array}{cc}
\alpha & t_1 \\
0 & 1
\end{array}\right), \dots,
\left(\begin{array}{cc}
\alpha & t_r \\
0 & 1
\end{array}\right)\right) \,
\left|\begin{array}{c}
t_i \in \C \\
 \alpha \in \C^{\ast}
\end{array} \right.
\right\}
\]
where $r = h^0(C,\OO_C(-\pi(C_0^2)) )$.
\item If $X$ is decomposable, $X\not\simeq \PP^1 \times C$ and if $X$ has two distinct minimal sections, $C_0$ and $C_1$ such that $\pi(C_0^2) = \pi(C_1^2)$ (accordingly $e=0$), then
\[
\aut_C(X) \simeq \left\{ \left(\begin{array}{cc}
\alpha & 0 \\
0 & 1
\end{array}\right)
\, \left|\begin{array}{c}
 \alpha \in \C^{\ast}
\end{array} \right.
\right\} \bigcup \left\{ \left(\begin{array}{cc}
0 & \beta \\
1 & 0
\end{array}\right)
\, \left|\begin{array}{c}
 \beta \in \C^{\ast}
\end{array} \right.
\right\}
\]
\item If $X\simeq \PP^1 \times C$, then
\[
\aut_C(X) \simeq \pgl(2,\C)
\]
\end{enumerate}
\end{theorem}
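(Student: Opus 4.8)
The statement concerns only the fiberwise group $\aut_C(X)$, so the tool to use is the Grothendieck exact sequence already recorded above,
\[
1\longrightarrow \aut(E)/\C^{\ast}\longrightarrow \aut_C(X)\longrightarrow \Delta\longrightarrow 1 ,
\]
with $\Delta=\{N\in{\rm Pic}(C)\mid E\simeq E\otimes N\}$ and $X\simeq\PP(E)$: an automorphism of $X$ over $C$ is a projective-bundle automorphism of $\PP(E)$, hence lifts to an isomorphism $E\to E\otimes N$ for some $N\in\Delta$, well defined modulo rescaling. The plan is then to compute, for each normal form of $E$ dictated by $e$, the group $\aut(E)$ modulo scalars together with $\Delta$, and to read off the extension. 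First I would replace $E$ by its normalized representative, with $h^0(E)>0$ and $h^0(E\otimes L)=0$ for all line bundles $L$ of negative degree; then $\det E$ has degree $-e$, the distinguished section yields $0\to\OO_C\to E\to\det E\to 0$, the sub-bundle $\OO_C$ corresponds to a minimal section $C_0$ with $C_0^2=-e$, and $\OO_C(-\pi(C_0^2))\cong(\det E)^{-1}$, so the integer $r$ of the statement equals $h^0(C,(\det E)^{-1})$.

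Then I would run the case split. If $e<0$, normalization forces every sub-line-bundle of $E$ to have degree $\le 0<-e/2$, so $E$ is stable, hence simple, ${\rm End}(E)=\C$, $\aut(E)/\C^{\ast}=1$, and the sequence collapses to $\aut_C(X)\simeq\Delta$ --- item~(1). If $e\ge 0$ and $X$ is indecomposable, $E$ is a non-split extension $0\to\OO_C\to E\to\det E\to 0$ whose degree-maximal sub-bundle $\OO_C$ is unique, so every endomorphism is upper triangular with off-diagonal entry in ${\rm Hom}(\det E,\OO_C)=\HH^0(C,(\det E)^{-1})$, and compatibility with the nonzero extension class forces the two diagonal scalars to coincide; modulo scalars $\aut(E)$ is thus the additive group $\HH^0(C,(\det E)^{-1})$ of dimension $r$, taking determinants shows $\Delta=1$, and writing the bundle map in a local trivialization yields the unipotent matrices displayed in item~(2). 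If $e\ge 0$ and $X$ is decomposable, then $E\simeq\OO_C\oplus\det E$ with $\deg\det E=-e\le 0$, so $\HH^0(C,\det E)=0$ unless $\det E\cong\OO_C$: generically $\aut(E)$ is block upper triangular and equals $\C^{\ast}\ltimes\HH^0(C,(\det E)^{-1})$ modulo scalars --- item~(3); if $\det E\cong\OO_C$ then $X\simeq\PP^1\times C$ and $\aut_C(X)\simeq\pgl(2,\C)$ acts fiberwise --- item~(5); and if $\det E$ is a nontrivial $2$-torsion line bundle then $E\simeq E\otimes\det E$, so $\Delta\simeq\Z/2\Z$, and the corresponding extra connected component of $\aut_C(X)$ exchanges the two minimal sections $C_0,C_1$ (which have $\pi(C_0^2)=\pi(C_1^2)$) --- item~(4).

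The main obstacle --- and the reason the statement has so many cases --- is the bookkeeping on the stratum $e=0$: one must decide precisely when $X$ carries a unique minimal section, exactly two, or a whole $\PP^1$ of them, hence whether $\Delta$ is trivial, $\Z/2\Z$, or larger, and whether the extension $1\to\aut(E)/\C^{\ast}\to\aut_C(X)\to\Delta\to 1$ splits. Once the normal forms of $E$ and their minimal sections are identified this is mechanical. I would use the genus-zero surfaces as a running check: Nagata's bound $e\ge -g$ makes $e\ge 0$ automatic, every bundle on $\PP^1$ splits and $2$-torsion is absent, so only the Hirzebruch case of item~(3) (with $r=e+1$) and $\PP^1\times\PP^1$ of item~(5) can occur. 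The last point --- that every class in $\Delta$ is realized by a genuine automorphism of $X$ over $C$ and not merely by an abstract twist of $E$ --- is precisely the surjectivity built into the Grothendieck sequence and requires no extra work.
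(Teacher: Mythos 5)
The paper does not prove this statement: it is quoted verbatim from Maruyama \cite{MARA} (Theorem 2 there), so there is no internal argument to compare yours against. Your reconstruction via the Grothendieck sequence $1\to\aut(E)/\C^{\ast}\to\aut_C(X)\to\Delta\to 1$ (which the paper records just before the theorem) together with the normal forms of the normalized bundle $E$ is the natural route and is essentially Maruyama's own: the stability argument in case $e<0$, the identification $\aut(E)/\C^{\ast}\cong\HH^0(C,(\det E)^{-1})$ for a non-split extension, and the Krull--Schmidt bookkeeping in the decomposable cases are all sound.

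Two points deserve tightening. In item (2) you write that ``taking determinants shows $\Delta=1$''; determinants only give $N^{\otimes 2}\cong\OO_C$, i.e.\ that $\Delta$ sits inside the $2$-torsion of ${\rm Pic}(C)$. To exclude a nontrivial $2$-torsion class $N$ you must invoke the hypothesis that $C_0$ is the \emph{unique} minimal section: $E\otimes N\cong E$ would force $h^0(E\otimes N)>0$, hence a second degree-zero line subbundle $N^{-1}\subset E$ and a second minimal section. The same uniqueness hypothesis is what justifies ``every endomorphism is upper triangular'' (an endomorphism must preserve the unique maximal subbundle $\OO_C$). Second, in your closing remark the possibilities for $\Delta$ are only $1$ or $\Z/2\Z$ --- never ``larger'' --- since $\Delta$ is always $2$-torsion and, for $E\cong\OO_C\oplus L$, Krull--Schmidt forces $N\in\{\OO_C,L\}$; in the $\PP^1\times C$ case the $\PP^1$ of minimal sections comes from $\aut(E)$, not from $\Delta$. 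Neither point affects the correctness of the overall scheme.
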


From this classification we can easily extract the finite subgroups
in each case.

\begin{corollary}\label{finp1}
Let $\pi\colon X \longrightarrow C$ be a ruled surface with invariant $e$
and let $G< \aut_C(X)$ be a finite subgroup.
\begin{enumerate}
\item If $e<0$, then
\[
G \simeq \left(\Z/2\Z\right)^r
\]
for some $r\geq 0$.
\item If $e\geq 0$, $X$ is indecomposable, then $G$ is trivial.
\item If $X$ is decomposable and if $X$ does not carry two minimal sections, $C_0$ and $C_1$ such that $\pi(C_0^2) = \pi(C_1^2)$ (as divisor classes), then $G$ is cyclic with generator
\[
\left(\begin{array}{cc}
\alpha & \Gamma \\
0 & 1
\end{array}\right),
\]
where $\alpha$ is a root of the unity and $\Gamma \in \HH^0(C,\OO_C(-\pi(C_0^2)) )$. 
\item If $X$ is decomposable, $X\not\simeq \PP^1 \times C$ and if $X$ has two distinct minimal sections, $C_0$ and $C_1$ such that $\pi(C_0^2) = \pi(C_1^2)$ (accordingly $e=0$), then the elements of $G$ can have the following forms:
\[
 \left(\begin{array}{cc}
\alpha & 0 \\
0 & 1
\end{array}\right),
\left(\begin{array}{cc}
0 & \beta \\
1 & 0
\end{array}\right),
\]
where $\alpha$ and $\beta$ are roots of the unity.
\item If $X\simeq \PP^1 \times C$, then $G$ is a finite subgroup of $\pgl(2,\C)$, namely:
\begin{itemize}
\item cyclic groups;
\item dihedral groups;
\item the tetrahedral group isomorphic to the alternating group $\mathcal{A}_4$;
\item the octahedral group isomorphic to the symmetric group $\mathcal{S}_4$;
\item the icosahedral group isomorphic to the alternating group $\mathcal{A}_5$.
\end{itemize}
\end{enumerate}
\end{corollary}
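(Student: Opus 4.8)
The plan is to run through the five cases of Maruyama's theorem one at a time: in each case I read off the abstract group structure of $\aut_C(X)$ from the explicit matrix description and then determine its finite subgroups.

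In case $(1)$, where $e<0$ and $\aut_C(X)\simeq\Delta=\{N\in\mathrm{Pic}(C)\mid E\simeq E\otimes N\}$, I would first observe that $\Delta$ is a $2$-torsion group: taking determinants of the rank-two isomorphism $E\simeq E\otimes N$ gives $\det E\simeq\det E\otimes N^{\otimes 2}$, so $N^{\otimes 2}\simeq\OO_C$. Hence $\Delta$ embeds in the $2$-torsion subgroup of $\mathrm{Pic}^0(C)\simeq\mathrm{Jac}(C)$, which is $(\Z/2\Z)^{2g}$, and therefore $\Delta$ — and so every subgroup $G$ — is isomorphic to $(\Z/2\Z)^r$ for some $r\geq 0$. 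In case $(2)$, where $X$ is indecomposable with $e\geq 0$, the group consists of tuples of unipotent matrices and the group law merely adds the upper-right entries, so $\aut_C(X)\simeq(\C^r,+)$ is torsion-free and the only finite subgroup is trivial.

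For case $(3)$ the group is a semidirect product: recording an element by its common diagonal entry $\alpha\in\C^{\ast}$ together with the vector $\Gamma=(t_1,\dots,t_r)\in\C^r$ of upper-right entries, multiplication is $(\alpha,\Gamma)\cdot(\alpha',\Gamma')=(\alpha\alpha',\,\alpha\Gamma'+\Gamma)$, i.e. $\aut_C(X)\simeq\C^{\ast}\ltimes\C^r$ with $\C^{\ast}$ acting by scaling. Given a finite $G$, the projection $G\to\C^{\ast}$ has finite, hence cyclic, image, while its kernel is a finite subgroup of the torsion-free group $\C^r$, hence trivial; so $G$ is cyclic. To exhibit a generator I would pick $g\in G$ mapping to a generator of the image, say of order $n$, and write $g=\bigl(\begin{smallmatrix}\alpha&\Gamma\\0&1\end{smallmatrix}\bigr)$ with $\alpha$ a primitive $n$-th root of unity; since the upper-right entries of $g^k$ are $\Gamma\,(1+\alpha+\dots+\alpha^{k-1})$ and $1+\alpha+\dots+\alpha^{n-1}=0$, the element $g$ already has order exactly $n$ and therefore generates $G$. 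In case $(4)$, $\aut_C(X)$ consists of the diagonal torus together with the ``swaps'' $\bigl(\begin{smallmatrix}0&\beta\\1&0\end{smallmatrix}\bigr)$, each of which squares to a scalar and hence has order two in $\pgl(2,\C)$; for a finite $G$, its intersection with the diagonal part is cyclic of index at most two, which yields the asserted description (after conjugating $G$ by a diagonal automorphism one may normalize the $\beta$'s to be roots of unity). Finally, in case $(5)$ one has $\aut_C(X)\simeq\pgl(2,\C)$ and the list is exactly the classical Klein classification of its finite subgroups.

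The argument is mostly bookkeeping built on Maruyama's theorem and the classical classification of finite subgroups of $\pgl(2,\C)$; the only point requiring a genuine, if short, computation is the one in case $(3)$, namely that a lift of a cyclic generator of the $\C^{\ast}$-image keeps the same order — the vanishing of the geometric sum $1+\alpha+\dots+\alpha^{n-1}$ — and I expect that to be the step most worth spelling out.
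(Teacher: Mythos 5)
Your proposal is correct and follows essentially the same route as the paper: read the group structure off each case of Maruyama's theorem and classify the finite subgroups directly. The one place you genuinely diverge is case (3), where you identify $\aut_C(X)\simeq\C^{\ast}\ltimes\C^r$ and get cyclicity from the exact sequence (torsion-free kernel, cyclic image) plus the geometric-sum computation for the order of a lift; the paper instead computes the commutator of two elements fiberwise to show $G$ is abelian and then invokes the claim that a finite abelian subgroup of $\pgl(2,\C)$ is cyclic -- a claim that is false as literally stated (the Klein four-group embeds in $\pgl(2,\C)$) and only holds because the elements here are upper triangular, so your argument is in fact the cleaner and more airtight of the two.
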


\begin{proof}
The first case comes from the fact that $\Delta$ is a subgroup of
the $2$-torsion part of ${\rm Pic}(C)$: if $E \simeq E\otimes N$,
$\det(E) \simeq \det(E) \otimes N^{\otimes 2} $, hence $N^{\otimes
2}$ is trivial. 

For the second case, observe that any element of the
form
\[
\left(\begin{array}{cc}
1 & t \\
0 & 1
\end{array}\right)
\]
has infinite order, unless $t=0$. 

For the third case, let $\Gamma, \Sigma \in \HH^0(C,\OO_C(-\pi(C_0^2)))$. Fix a point $x\in C$ and homogeneous coordinates $(z:w)$ on the fiber over $x$. Define
\begin{align*}
T_x (z,w) & = (\alpha z + \Gamma(x)w: w),\\
S_x (z,w) & = (\beta z + \Sigma(x)w: w)
\end{align*}
where $\alpha, \beta \in \C^{\ast}\backslash {1}$. Then, 
\[
T_x^{-1} \circ S_x^{-1} \circ T_x \circ S_x (z,w) = \left(z+ \frac{\alpha-1}{\alpha\beta}\Sigma(x) - \frac{\beta-1}{\alpha\beta}\Gamma(x) :w\right).
\]
If we suppose that $T$ and $S$ belong to a finite group $G$, then 
\[
\frac{\alpha-1}{\alpha\beta}\Sigma(x) - \frac{\beta-1}{\alpha\beta}\Gamma(x) = 0, \, \forall x\in C,
\]
which implies that $T$ and $S$ commute. Hence, $G$ is abelian. Since any finite abelian subgroup of $\pgl(2,\C)$ is cyclic, so is $G$. Observe that for $\alpha \neq 1$ and $\Gamma \in \HH^0(C,\OO_C(-\pi(C_0^2)))$,
\[
T^n_x(z,w) = \left(\alpha^n z + \left(\sum_{i=0}^{n-1}\alpha^i\right)\Gamma(x) w : w\right) = \left(\alpha^n z + \left(\frac{\alpha^n-1}{\alpha -1}\right)\Gamma(x) w : w\right).
\]
Hence, $\alpha^n=1$ if and only if $T^n_x=id$, for all $x\in C$.

The fourth case is straightforward, we have to impose only that $\alpha$ and $\beta$ are roots of the unity. The fifth case is the well-known classification of finite subgroups of $\pgl(2,\C)$, see \cite{BLI} for example.
\end{proof}

\subsection{Bounds}
The classification of finite subgroups of automorphisms and the properties of foliations on ruled surfaces presented above are our main tools to establish the bounds. We will use Theorem \ref{p1tg} in order to split in two cases: whether the foliation has all singularities lying invariant fibers or not. Let us begin by the second one.

\begin{theorem}
Let $\pi: X\longrightarrow C$ be a $\PP^1$-bundle over a smooth curve
whose genus is $g\geq1$ and let $e$ denote the invariant of $X$. Let
$\F$ be a foliation on $X$ of bidegree $(a,b)$ such that:
\begin{enumerate}
\item $\aut(\F)$ is finite;
\item $\F$ has a singularity at $p\in X$ which lies on a fiber $F$ not $\F$--invariant;
\item $\mu(\F,p) = {\rm Z}(\F,F,p)$ if $p$ lies in a $\F$--invariant fiber $F$.
\end{enumerate}
Then:
\begin{itemize}
\item $|\aut(\F)| \leq (8g + 4)[(a+1)(2b-ae + 2 -2g) + 2-2g]$ if $e<0$;
\item $|\aut(\F)| \leq (a+1)(4g + 2)[(a+1)(2b-ae + 2 -2g) + 2-2g]$ if $e\geq 0$.
\end{itemize}
\end{theorem}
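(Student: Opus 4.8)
The plan is to bound $|\aut(\F)|$ by the product of the size of a $G$--orbit inside $\sing(\F)$ and the order of a point--stabilizer, the latter being where the local analysis of Section 3 enters. Write $G=\aut(\F)$ and $N\ceq c_2(T_X\otimes K_\F)=(a+1)(2b-ae+2-2g)+2-2g$, which by \eqref{BB} equals $\sum_{q\in\sing(\F)}\mu(\F,q)$. Let $p$ be the singular point furnished by hypothesis (2) and let $H_p<G$ be its stabilizer. Applying Proposition \ref{orb} and discarding every term except the one coming from the orbit of $p$ (all Milnor numbers being at least $1$) yields $|G|\le N\,|H_p|$. Thus it suffices to prove $|H_p|\le(a+1)(4g+2)$ in general, and $|H_p|\le 8g+4$ when $e<0$.

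Since $g\ge1$, the base $C$ is irrational (in particular $X\not\simeq\PP^1\times\PP^1$), so the ruling is $\aut(X)$--invariant and one has the exact sequence $1\to\aut_C(X)\to\aut(X)\to\aut(C)$ (Maruyama). Write $\alpha\colon G\to\aut(C)$ for the induced map and $G_0\ceq G\cap\aut_C(X)=\ker\alpha$. If $\gamma\in H_p$ then $\gamma$ fixes $p$, hence fixes the fibre $F$ through $p$ (fibres are disjoint), so $\alpha(H_p)$ fixes $\pi(p)\in C$. A subgroup of $\aut(C)$ fixing a point is cyclic — for $g\ge2$ it injects into the tangent line $T_{\pi(p)}C\cong\C$, and for $g=1$ the stabilizer of a point in $\aut(C)$ is cyclic of order at most $6$ — and therefore $|\alpha(H_p)|\le 4g+2$ by Wiman's bound \eqref{wim} (for $g\ge2$; directly for $g=1$). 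Setting $K\ceq H_p\cap G_0$, we get $|H_p|=|K|\,|\alpha(H_p)|\le |K|(4g+2)$, so it remains to bound $|K|$.

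Every element of $K$ lies in $\aut_C(X)$, hence fixes each fibre of $\pi$; restricting to a neighbourhood of $p$, the group $K$ preserves the (regular) foliation defined by the ruling, and, acting trivially on $C$, it fixes the $1$--form $\di\pi$ defining it. It also preserves $\F$, a local generator $v$ of which has an isolated singularity at $p$; after the normalization of Section 3 (always possible for a finite group) we may take the multipliers of $K$ on $v$ constant, so $K$ is a finite subgroup of $\aut(\text{ruling},p)\cap\aut(v,p)$. As $F$ is not $\F$--invariant by hypothesis (2), Proposition \ref{tgvec} applies, with the ruling in the role of the regular foliation and $v$ in that of the vector field: the subgroup of $K$ fixing the leaves of the ruling — which is all of $K$ — is cyclic and satisfies $|K|\le\tg(\F,F,p)+1\le\tg(\F,F)+1=a+1$. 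This gives $|H_p|\le(a+1)(4g+2)$, hence $|G|\le(a+1)(4g+2)N$, the $e\ge0$ bound. When $e<0$, Corollary \ref{finp1} forces $G_0\cong(\Z/2\Z)^r$, so the cyclic group $K$ has order at most $2$; this improves the bound on $|H_p|$ to $2(4g+2)=8g+4$ and gives $|G|\le(8g+4)N$.

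The delicate point is the invocation of Proposition \ref{tgvec}: one must recognize the ruling as the ambient regular foliation, verify that $K$ normalizes both foliations with constant multipliers, and observe that ``fixing the leaves of the ruling'' is automatic for a subgroup of $\aut_C(X)$. Two side observations are also used: a finite--order element of $\aut_C(X)$ fixing one fibre pointwise is the identity (its eigenvalues along the fibres are constant and both equal $1$ on that fibre), which independently shows that $K$ — a finite subgroup of the stabilizer of $p$ in $\aut(F)\cong\pgl(2,\C)$ — is cyclic; and an automorphism of a curve of genus $\ge2$ fixing a point and acting trivially on the tangent space there is trivial, which is what makes $\alpha(H_p)$ cyclic. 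Hypothesis (3) is not used directly here; it only ensures that $\F$ falls among the cases of Theorem \ref{p1tg}, the present situation being its case (6) (which forces $a\ge1$, so that the $e<0$ bound genuinely improves the general one).
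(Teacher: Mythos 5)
Your proof is correct and follows essentially the same route as the paper: bound the index of the stabilizer $H_p$ by the number of singularities via the orbit decomposition, split $H_p$ through the exact sequence $1\to K\to H_p\to \alpha(H_p)\to 1$ induced by $\pi$, control $\alpha(H_p)$ by Wiman's bound, and control $K$ by Proposition \ref{tgvec} applied to the ruling (or by Corollary \ref{finp1} when $e<0$). The only cosmetic difference is that you make explicit a few points the paper leaves implicit (cyclicity of the point-stabilizer in $\aut(C)$, the role of hypothesis (3)), which does not change the argument.
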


\begin{proof}
Take $G$ the stabilizer of $p$ in $\aut(\F)$. Then $G$ has index at
most
\[
c_2(TX\otimes K_{\F}) = (a+1)(2b-ae + 2 -2g) + 2-2g,
\]
the number of singularities. The projection $\pi$ induces a group homomorphism, hence an exact sequence:
\[
1 \longrightarrow K \longrightarrow G \longrightarrow H
\longrightarrow 1, 
\]
where $K = G \cap \aut_C(X)$ and $H<\aut(C)$. Since $G$ fixes $p$, $K$ also
fixes $p$ and $H$ fixes $\pi(p)$. In particular, $H$ is cyclic. We
have that $K$ is also cyclic. Indeed, in a small neighborhood of
$p$ we can choose coordinates $(x,y)$ with $p=(0,0)$ and apply Corollary \ref{fibloc} to the ruling.

Suppose that $e<0$. Then, by Corollary \ref{finp1}, $K < \Z/2\Z$. By
Wiman's bound (\ref{wim}), $|H| \leq 4g+2$. Therefore,
\[
|G| = |H||K| \leq 8g + 4.
\]

If $e\geq0$, we also have that $|H| \leq 4g+2$. The fiber $F$ that
contains $p$ is not $\F$--invariant, then, by Proposition
\ref{tgvec},
\[
|K| \leq \tg(\F,F,p) + 1 \leq a+1.
\]
Therefore,
\[
|G| = |H||K| \leq (a+1)(4g + 2).
\]
\end{proof}

When all singularities lie on invariant fibers, Theorem \ref{p1tg}
imposes restrictions on which foliations can occur. We give them the following bound:

\begin{theorem}
Let $X\longrightarrow C$ be a $\PP^1$-bundle over a smooth curve
whose genus is $g\geq 1$ and let $e$ denote the invariant of $X$. Let
$\F$ be a foliation on $X$ of bidegree $(a,b)$, $ab\neq 0$, such
that
\begin{enumerate}
\item $\aut(\F)$ is finite;
\item All singularities lie on $\F$--invariant fibers and satisfy $\mu(\F,q) = {\rm Z}(\F,F,q)$;
\end{enumerate}
Then
\begin{itemize}
\item  $|\aut(\F)|\leq 84(g-1)(a+1)$ if $g\geq 2$;
\item  $|\aut(\F)|\leq 6(a+1)b$  if $g=1$.
\end{itemize}
\end{theorem}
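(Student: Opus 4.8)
The plan is to combine Theorem \ref{p1tg}, which pins down the geometry of $\F$, with the splitting of $\aut(\F)$ coming from the ruling, and to estimate the two resulting factors separately. Since $g\ge 1$ forces $a\ge 0$ and $ab\ne 0$ forces $b\ne 0$, we have $a\ge 1$; in particular $\F$ is neither tangent to the ruling nor Riccati, so cases $(1)$ and $(2)$ of Theorem \ref{p1tg} are excluded, $b\ne 0$ excludes case $(3)$, $g\ge 1$ excludes case $(4)$, and hypothesis $(2)$ of the present statement excludes case $(6)$. Hence $\F$ is as in case $(5)$: $e>0$, $b=(a+1)e$, the tangency divisor between $\F$ and the ruling is $\tg(\F,\tau)=aC_0+F_1+\cdots+F_s$ with $s=b+2-2g$ and $F_1,\dots,F_s$ the $\F$--invariant fibers, all singularities of $\F$ lie on $F_1\cup\cdots\cup F_s$, and $\tg(\F,C_0)=b-ae-e=0$, so that $\F$ is regular and transverse to $C_0$ along all of $C_0$. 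Since $e>0$, $C_0$ is moreover the unique minimal section of $X$.

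As $g\ge 1$, Maruyama's exact sequence $1\to\aut_C(X)\to\aut(X)\to\aut(C)$ restricts to an exact sequence
\[
1\longrightarrow K\longrightarrow \aut(\F)\longrightarrow H\longrightarrow 1,
\]
with $K=\aut(\F)\cap\aut_C(X)$ and $H<\aut(C)$ both finite, so $|\aut(\F)|=|K|\,|H|$. Since $e>0$, every automorphism of $X$ preserves $C_0$; as $K$ also preserves every fiber, $K$ fixes $C_0$ pointwise. I would bound $|K|$ as follows. If $X$ is indecomposable, then $K=1$ by Corollary \ref{finp1}. Otherwise $K$ is cyclic, and I compute its order at a general point $q\in C_0$ (not on any $F_i$). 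By Proposition \ref{streg}, in suitable local coordinates $(t,z)$ near $q$ the ruling is $\{dt=0\}$ and $K$ is linear diagonal; since $K\ne 1$ fixes $C_0$ pointwise and $C_0$ is transverse to the ruling, necessarily $C_0=\{z=0\}$ and a generator of $K$ is $\psi(t,z)=(t,lz)$ with $l$ a primitive $|K|$--th root of unity. Let $v=A\,\partial_t+B\,\partial_z$ be a local vector field defining $\F$. Since $q$ is general on $C_0$, the local equation $A$ of $\tg(\F,\tau)$ equals $z^{a}u$ with $u(q)\ne 0$, while $\tg(\F,C_0,q)=0$ forces $B(q)\ne 0$. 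Writing $\psi_{\ast}v=hv$ with $h$ a local unit, comparing the two components gives $A(t,l^{-1}z)=hA(t,z)$ and $lB(t,l^{-1}z)=hB(t,z)$; evaluating at $q$ (using $A=z^{a}u$ with $u(q)\ne 0$ and $B(q)\ne 0$) yields $h(q)=l^{-a}$ and $h(q)=l$, hence $l^{a+1}=1$ and $|K|$ divides $a+1$.

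It remains to bound $|H|<\aut(C)$. If $g\ge 2$, the Hurwitz bound gives $|H|\le|\aut(C)|\le 84(g-1)$, whence $|\aut(\F)|=|K|\,|H|\le 84(g-1)(a+1)$. If $g=1$, the group $H$ permutes the set of the $s=b$ distinct points $\pi(F_1),\dots,\pi(F_s)$ of $C$ (note $b=(a+1)e\ge 2$). Let $T\trianglelefteq H$ be the normal subgroup of translations; a nonzero translation of an elliptic curve has no fixed points, so $T$ acts freely on this $b$--point set and $|T|$ divides $b$. On the other hand $H/T$ embeds into the group of automorphisms of $C$ fixing a point, which is cyclic of order at most $6$, so $|H|=|T|\,|H/T|\le 6b$ and $|\aut(\F)|=|K|\,|H|\le 6(a+1)b$.

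The step I expect to be the main obstacle is the estimate that $|K|$ divides $a+1$: it requires extracting from case $(5)$ of Theorem \ref{p1tg} both that $\F$ is transverse to $C_0$ and that $\F$ is tangent to the ruling along $C_0$ with multiplicity exactly $a$, and then reading off these two facts from the local normal forms of $\F$ and of the generator of $K$ near a general point of $C_0$. For $g=1$, the free action of the translation part of $H$ on the images of the invariant fibers is what replaces a Hurwitz-type bound, which is unavailable since $\aut(C)$ is infinite.
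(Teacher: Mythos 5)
Your proof is correct and follows essentially the same route as the paper: reduction to case $(5)$ of Theorem \ref{p1tg}, the exact sequence $1\to K\to\aut(\F)\to H\to 1$, the bound $|K|\le a+1$ from the local structure of $\F$ along $C_0$, and the Hurwitz/elliptic-translation bounds for $|H|$. The only (minor, and arguably welcome) difference is that you bound $|K|$ by a direct computation with $\psi_{\ast}v=hv$ at a general point of $C_0$, whereas the paper invokes Proposition \ref{tgvec} at a point $F\cap C_0$; your version sidesteps the fact that $v$ has no singularity there, and even yields the slightly stronger conclusion that $|K|$ divides $a+1$.
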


\begin{proof}
By Theorem \ref{p1tg}, we have that $e>0$, $b=(a+1)e$ since $ab\neq0$. The
tangency divisor between $\F$ and the ruling is composed of $b+2-2g$
fibers and the negative section $aC_0$. Moreover, for any fiber $F$
not $\F$--invariant, $F$ and $\F$ have a single tangent point of
multiplicity $a$ that lies on the intersection with $C_0$. Since
$\tg(\F, C_0) = b -(a+1)e = 0$, $C_0$ does not support any
singularity of $\F$.

As we have mentioned in the previous theorem, $\aut(\F)$ fits into  an
exact sequence
\[
1 \longrightarrow K \longrightarrow \aut(\F) \longrightarrow H
\longrightarrow 1
\]
where $K < \aut_C(X)$ and $H<\aut(C)$. By Corollary \ref{finp1}, $K$
is cyclic and fixes $C_0$ pointwise. For the fiber $F$ through $p$,
$F\cap C_0$ is a regular point of $\F$ fixed by $K$. Then Proposition \ref{tgvec} implies that
\[
|K| \leq \tg(\F, F,F\cap C_0) + 1 = a+1 = \frac{b}{e}.
\] 
It remains to bound the order of $H$ which is a finite subgroup of
$\aut(C)$. If $g\geq 2$, then
\[
|H| \leq 84(g-1)
\]
by the Hurwitz's Theorem. If $g=1$, $H < \Z/n\Z \ltimes \Z/m\Z$ where
$n\leq 6$, $\Z/n\Z$ has a fixed point and $\Z/m\Z$ is a group
generated by translations. We have that $H$ permutes the images of
the $b+2-2g = b$ invariant fibers, then
\[
|H| \leq 6b = 6(a+1)e .
\]

Combining these bounds we have
\begin{enumerate}
\item $|\aut(\F)|\leq 84(g-1)(a+1)$ if $g\geq 2$ and
\item $|\aut(\F)|\leq 6(a+1)b$ if $g=1$.
\end{enumerate}

\end{proof}

\section{Bounds for Foliations on Non-ruled Surfaces}

Throughout this section we will analyze the automorphism groups of
foliations on surfaces that are not birationally ruled, that is,
they have nonnegative Kodaira dimension. In particular, such
surfaces do not support pencils of rational curves and this  will play an important role on our approach to bound the order of the groups. We begin with the simplest case: regular foliations. It turns out that they live naturally on general type surfaces.

\subsection{Regular Foliations of General Type}

Let $X$ be a smooth projective surface and $\F$ a foliation  of general type on $X$
 and suppose that the minimal model $(Y,\mathcal{G})$
is regular. In this situation, the Baum-Bott formulae express the
Chern numbers of $Y$ in terms of the foliation.

\begin{lemma}\label{reglem}
Let $\mathcal{G}$ be a regular foliation of general type on a smooth
compact surface $Y$, then
\[
c_1(Y)^2 > 2c_2(Y).
\]
\end{lemma}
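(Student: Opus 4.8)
The plan is to exploit the two Baum--Bott index formulae from \eqref{BB} together with the fact that a regular foliation has \emph{no} singularities, so the right-hand sides vanish. Writing $K_\G$ for the canonical bundle of $\G$ and $N_\G$ for its normal bundle on $Y$, regularity gives $\mathrm{Sing}(\G)=\emptyset$, hence
\[
N_\G^2 = c_1(Y)^2 - 2K_\G\cdot K_Y + K_\G^2 = 0
\qquad\text{and}\qquad
c_2(Y) - K_\G\cdot K_Y + K_\G^2 = 0 .
\]
From these two identities I would first eliminate $c_2(Y)$ and $c_1(Y)^2$: subtracting, we get $c_1(Y)^2 - c_2(Y) = K_\G\cdot K_Y$, and since $K_X \simeq \tau^\ast\otimes N^\ast$ relations of the ruled case are irrelevant here, I simply use the exact sequence $0\to T_\G\to T_Y\to N_\G\to 0$, which yields $K_Y = K_\G + (N_\G)^{-1}$ as divisor classes, equivalently $K_Y = 2K_\G - (\text{something})$; cleanly, $c_1(Y) = -K_Y$ and $T_\G$ has $c_1(T_\G) = -K_\G$, $N_\G = K_Y^{-1}\otimes K_\G^{-1}$ wait—better: $N_\G \simeq \det T_Y \otimes T_\G^{-1}$ so $N_\G = K_Y^{-1}\otimes K_\G$, i.e. as divisors $N_\G \equiv -K_Y + K_\G$. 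Substituting this into $N_\G^2=0$ reproduces the first displayed identity, so the two Baum--Bott relations are the genuine input.

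Next I would combine the two vanishing relations to isolate $c_1(Y)^2 - 2c_2(Y)$. From $c_2(Y) = K_\G\cdot K_Y - K_\G^2$ and $c_1(Y)^2 = 2K_\G\cdot K_Y - K_\G^2$ we compute
\[
c_1(Y)^2 - 2c_2(Y) = \bigl(2K_\G\cdot K_Y - K_\G^2\bigr) - 2\bigl(K_\G\cdot K_Y - K_\G^2\bigr) = K_\G^2 .
\]
So the inequality $c_1(Y)^2 > 2c_2(Y)$ is equivalent to $K_\G^2 > 0$. This is where the hypothesis ``general type'' enters: since $(Y,\G)$ is the minimal model of a general type foliation, $K_\G$ is nef and $\kod(\G) = 2$, i.e. $K_\G$ is big; a nef and big divisor on a surface has strictly positive self-intersection, $K_\G^2 > 0$. (If one prefers not to invoke nef-and-big directly, one can argue that for a reduced minimal model of general type $K_\G$ is ample modulo contractions, or quote the structure theory in \cite{Bru}.) Finally one must pass from $Y$ back to the statement about $c_1(Y)^2$ and $c_2(Y)$ themselves: since we assumed the minimal model is \emph{regular}, there is nothing to contract—the reduction of singularities is trivial on the $\G$-side—so $Y$ is the surface in the statement and no correction terms from blow-ups appear.

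The main obstacle, I expect, is justifying $K_\G^2 > 0$ rigorously from the definition of general type used in the paper, namely $\kod(\G) = $ Iitaka--Kodaira dimension of $K_\G$ being maximal. One needs the standard fact that on a reduced minimal model a general type foliation has nef canonical bundle (this is part of the McQuillan--Mendes classification recalled before the statement), and then the elementary algebraic-geometry lemma that a nef divisor $D$ on a surface with $D$ big satisfies $D^2 > 0$ (indeed $D^2 \geq 0$ by nefness, and $D^2 = 0$ would force the Iitaka dimension to be at most $1$). Everything else is the bookkeeping with the Baum--Bott formulae above, which is routine.
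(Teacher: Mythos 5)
Your proposal is correct and takes essentially the same route as the paper: both use the vanishing of the two Baum--Bott sums for a regular foliation to obtain $c_1(Y)^2 - 2c_2(Y) = K_{\G}^2$, and then conclude from $K_{\G}$ being nef and big that $K_{\G}^2>0$. The paper likewise simply asserts the nef-and-big property at this point, so your flagged ``main obstacle'' is treated with the same brevity there.
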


\begin{proof}
Since $\mathcal{G}$ is regular it follows from  Baum-Bott formulas $(2.1)$ and $(2.2)$ that 
\begin{align*}
c_2(Y) &= K_X\cdot K_{\mathcal{G}} - K_{\mathcal{G}}^2,\\
c_1(Y)^2 &=2K_Y\cdot K_{\mathcal{G}} - K_{\mathcal{G}}^2.
\end{align*}
Therefore,
\[
c_1(Y)^2 -2c_2(Y) =  2K_Y\cdot K_{\mathcal{G}} - K_{\mathcal{G}}^2     -2K_X\cdot K_{\mathcal{G}} +2 K_{\mathcal{G}}^2= K_{\mathcal{G}}^2.
\] 
Since $\mathcal{G}$ is regular  and of general type, we have that
$K_{\mathcal{G}}$ is nef and  big. Hence
\[
0<K_{\mathcal{G}}^2 =c_1(Y)^2 -2c_2(Y).
\]
\end{proof}

\begin{theorem}
Let $\F$ be a foliation of general type on a smooth surface $X$. If
$\F$ is birationally regular, then
\[
|\bim(\F)|\leq (42K_{Y})^2,
\]
where $(Y,\G)$ is its (regular) minimal model.
\end{theorem}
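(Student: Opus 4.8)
The plan is to reduce the estimate to Xiao's linear bound (\ref{xiao}) applied to the surface $Y$ that underlies the regular minimal model $(Y,\G)$. First, since $\F$ is of general type, the property recalled in Section~2 gives $\bim(\F)=\aut(\G)$, and $\aut(\G)$ is by definition a subgroup of $\aut(Y)$. Thus everything reduces to showing that $Y$ is a minimal surface of general type and then applying (\ref{xiao}) to obtain $|\bim(\F)|=|\aut(\G)|\le |\aut(Y)|\le (42K_Y)^2$.

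The substantive part is therefore to establish that $Y$ is minimal of general type. Since $\G$ is regular and of general type, $K_\G$ is nef and big, so by Lemma~\ref{reglem} one has $K_\G^2=c_1(Y)^2-2c_2(Y)>0$; in particular $K_Y^2>0$. Next I would rule out $(-1)$-curves on $Y$. An $\F$-invariant $(-1)$-curve $E$ cannot occur: each local ${\rm Z}$-index of a regular foliation along an invariant curve vanishes, so (\ref{gsv}) would give $0={\rm Z}(\G,E)=\chi(E)+K_\G\cdot E=2+K_\G\cdot E$, forcing $K_\G\cdot E=-2$ and contradicting the nefness of $K_\G$ (a fortiori contradicting the minimality of $(Y,\G)$, since such a curve could be contracted). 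To exclude transverse $(-1)$-curves I would invoke the classification of regular foliations of nonnegative Kodaira dimension in \cite{Bru}, according to which such a foliation forces the ambient surface to be minimal. Once $Y$ is known to be minimal, the Enriques--Kodaira classification together with $K_Y^2>0$ leaves only the rational surfaces and the minimal surfaces of general type; but a minimal rational surface ($\PP^2$ or a Hirzebruch surface) supports no regular foliation with nef canonical bundle -- on $\PP^2$ every foliation has singularities, and on a Hirzebruch surface the only regular foliation is the ruling, whose canonical bundle has negative intersection with a fiber. Hence $Y$ is minimal of general type, and the bound follows from (\ref{xiao}) as above.

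The step I expect to be the main obstacle is exactly this last point: proving that the surface $Y$ carrying the regular minimal model is itself minimal of general type. This is what makes it legitimate to use Xiao's estimate with $K_Y^2$ itself, rather than with the strictly larger self-intersection of the canonical class of a minimal model of $Y$ as a surface, which would give only a weaker bound.
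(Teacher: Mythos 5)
Your proposal is correct and follows the same skeleton as the paper's proof: reduce via $\bim(\F)=\aut(\G)<\aut(Y)$ to showing that $Y$ is a \emph{minimal} surface of \emph{general type}, then apply Xiao's bound (\ref{xiao}). The difference lies only in how that middle step is closed. The paper disposes of it by citing two results from Brunella's paper \cite{Bru1}: Th\'eor\`eme 2 (a regular foliation on a surface not of general type forces $c_1^2=2c_2$, which contradicts Lemma \ref{reglem}) and Corollaire 1 (such a $Y$ is minimal). You instead reprove part of this: the exclusion of $\G$-invariant $(-1)$-curves via the vanishing of ${\rm Z}(\G,E)$ and nefness of $K_\G$ is correct and is exactly the computation the paper uses elsewhere for $\F$-exceptional curves, and your Enriques--Kodaira case analysis replaces Th\'eor\`eme 2; but for transverse $(-1)$-curves you still fall back on Brunella's classification, so the external input is essentially the same. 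One small logical slip to fix: from Lemma \ref{reglem} you get $K_Y^2-2c_2(Y)>0$, and this does \emph{not} by itself yield $K_Y^2>0$, since $c_2(Y)$ can be negative; however, the only minimal surfaces with $c_2<0$ are ruled over curves of genus at least two, and those satisfy $c_1^2=2c_2$, so they are already excluded by the strict inequality, after which $c_2(Y)\geq 0$ and your case analysis goes through. With that patch the argument is complete and yields the same bound.
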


\begin{proof}
Let $(Y,\G)$ be the minimal model of $(X,\F)$, which is regular by
hypothesis. It follows from Brunella's classification of regular foliations on
surfaces \cite[Th\'eor\`eme 2]{Bru1} that if $Y$ is not of general type,  then $c_1(Y)^2 = 2c_2(Y)$. Therefore, by lemma \ref{reglem} we conclude
that $Y$ is a surface of general type. Moreover, Brunella proves   in \cite[Corollaire 1]{Bru1}   
that  $Y$ must be minimal.

Since $\F$ is of general type it follows from \cite{Bru3}  that $\bim(\F) \simeq \aut(\G)$. Hence, Xiao's bound
(\ref{xiao}) implies that
\[
|\bim(\F)| = |\aut(\G)| \leq |\aut(Y)| \leq (42K_{Y})^2.
\]
\end{proof}

\subsection{Bounds for Singular Foliations}The strategy that we will follow to bound the order of the automorphisms groups in this context is to analyze the action on some very ample linear systems associated to the canonical bundle of the foliation. Then we will find some appropriate divisors to use the local theory. The following Lemma will help us to choose these linear systems.

\begin{lemma}\label{fuj}
Let $X$ be a complex  projective  surface and $L$ an ample divisor on
$X$. Then $|K_X + 4L|$ is very ample.
\end{lemma}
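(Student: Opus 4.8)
The plan is to deduce this from Reider's theorem on adjoint linear systems on surfaces (here $X$ is of course assumed smooth, as everywhere in this paper). Recall the statement: for a smooth projective surface $X$ and a nef divisor $D$, if $D^2\geq 5$ then any base point of $|K_X+D|$ lies on a nonzero effective divisor $E$ with $D\cdot E=0,\ E^2=-1$ or $D\cdot E=1,\ E^2=0$; and if $D^2\geq 10$ then any two points $x_1,x_2$ (possibly infinitely near, i.e.\ a point together with a tangent direction) not separated by $|K_X+D|$ lie on a nonzero effective divisor $E$ with $D\cdot E=0,\ E^2\in\{-1,-2\}$, or $D\cdot E=1,\ E^2\in\{-1,0\}$, or $D\cdot E=2,\ E^2=0$. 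In particular, if $D^2\geq 10$ and no nonzero effective divisor $E$ satisfies $D\cdot E\leq 2$, then $|K_X+D|$ is very ample.

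I would apply this with $D=4L$. First I would record the numerical input: since $L$ is ample it is nef with $L^2>0$, and as the intersection form is integral this gives $L^2\geq 1$; hence $D=4L$ is ample (in particular nef) and $D^2=16L^2\geq 16\geq 10$, so Reider's theorem applies to $K_X+D=K_X+4L$.

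Next I would argue by contradiction. If $|K_X+4L|$ were not very ample, Reider's theorem would produce a nonzero effective divisor $E$ on $X$ with $D\cdot E\leq 2$, that is with $4\,(L\cdot E)\leq 2$. But $L$ is ample and $E$ is a nonzero effective divisor, so $L\cdot E\geq 1$ and therefore $D\cdot E=4\,(L\cdot E)\geq 4$, a contradiction. Hence $|K_X+4L|$ is very ample.

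There is no real obstacle here beyond invoking Reider's theorem in the precise form above; the only two points that deserve explicit mention are the smoothness of $X$ (needed for Reider's theorem) and the elementary fact that ampleness of $L$ forces $L\cdot E\geq 1$ for every nonzero effective divisor $E$, which is exactly what rules out all of Reider's exceptional configurations simultaneously once $L$ is multiplied by $4$.
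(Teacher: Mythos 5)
Your proof is correct and follows essentially the same route as the paper: both invoke Reider's theorem with $D=4L$, using $(4L)^2\geq 16\geq 10$ together with the fact that ampleness of $L$ forces $L\cdot E\geq 1$ (hence $4L\cdot E\geq 4>2$) for every nonzero effective divisor $E$, which excludes all exceptional configurations. Your write-up is merely more explicit about the precise form of Reider's statement being used.
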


\begin{proof}
For any effective divisor $E$ in $X$,  we have $L\cdot E \geq 1$. In particular, we have that 
$nL\cdot E \geq n$ and $(nL)^2 \geq n^2$ and this implies that the
exceptional cases of  Reider's theorem \cite{Rei} do not occur
for $n\geq 4$.
\end{proof}

In \cite{CF}  the authors proved  that, in general,
foliations with ample canonical bundle have finite automorphism
group. There is only one family of exceptions.  They summarize this in
the following proposition:

\begin{proposition}\cite{CF}
 If $\F$ is a foliation on a smooth projective surface $X$ with $K_{\F}$ ample and ${\rm Aut}(\F)$ infinite,
 then up to a birational map, $\F$ is preserved by the flow of a vector field $v=v_{1}\oplus v_{2}$ on $\PP^{1}\times \PP^{1}$. Moreover, if $v$ is not tangent to a foliation by rational curves then $\F$ is given by a global vector field on $\PP^{1}\times \PP^{1}$.
\end{proposition}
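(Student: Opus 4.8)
The plan is to produce a one-parameter group of symmetries of $\F$, to use ampleness of $K_\F$ to force it to act generically transversely to $\F$, and then to read off from the resulting transverse structure that, up to a birational map, $X\cong \PP^1\times\PP^1$ and $\F$ has the stated form.

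\emph{Step 1: an infinitesimal symmetry exists.} Since $K_\F$ is ample, $K_\F^{\otimes m}$ is very ample for $m\gg 0$; as $\aut(\F)$ preserves the intrinsic line bundle $K_\F$, it acts linearly on $\HH^0(X,K_\F^{\otimes m})$ and is thereby realised inside $\pgl(\HH^0(X,K_\F^{\otimes m}))$ as the subgroup fixing $X$ and satisfying the Zariski-closed condition defining "$\varphi$ preserves $\F$". Hence $\aut(\F)$ is a linear algebraic group, so it has finitely many connected components, and being infinite it has $\aut^0(\F)\neq\{1\}$. A connected positive-dimensional algebraic group contains a copy of $\mathbb{G}_a$, $\mathbb{G}_m$ or an elliptic curve; in the first two cases $\F$ is preserved by the flow of a non-zero global vector field $w$ on $X$, equivalently $[w,T_\F]\subseteq T_\F$.

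\emph{Step 2: the symmetry is transverse, and there is no elliptic part.} If $w\in T_\F$ on a dense set, then (being tangent to $\F$ everywhere) its orbit closures are leaves of $\F$; a generic orbit closure of $\mathbb{G}_a$ or $\mathbb{G}_m$ is a rational curve $L$, and composing with the normalisation $\nu\colon\PP^1\to L$ yields a non-zero morphism $\OO_{\PP^1}(2)=T_{\PP^1}\to\nu^\ast T_\F$, so $K_\F\cdot L=-\deg\nu^\ast T_\F\le -2<0$, contradicting ampleness. The same device excludes an elliptic factor: an elliptic curve $E\subset\aut^0(\F)$ has no fixed point on $X$ (it cannot embed in $\gl(2,\C)$), so $E$ foliates $X$ by elliptic curves; if $\F$ is tangent to this orbit foliation the generic leaf is an elliptic curve $L$ with $K_\F\cdot L\le 0$, impossible, and if $\F$ is generically transverse to it then no generic fibre of the induced elliptic fibration $X\to X/E$ can be $\F$-invariant (otherwise all of them are and $\F$ is that fibration, so $E\subset T_\F$), whence the generic fibre is everywhere $\F$-transverse and $K_\F\cdot(\text{fibre})=0$, again impossible. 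So $\aut^0(\F)$ is affine and the one-parameter group acts generically transversely to $\F$; its orbit foliation $\G$ is then a foliation by rational curves, so $X$ is uniruled.

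\emph{Step 3: reduction to $\PP^1\times\PP^1$ and to a split vector field.} A foliation by rational curves is, up to a birational map, the ruling $\rho\colon Y\to B$ of a geometrically ruled surface, on which $\aut^0(\F)$ acts compatibly with $\rho$ (hence acts on $B$) and preserves the birational transform of $\F$, which is generically $\rho$-transverse. The fibre-preserving part $\aut^0(\F)\cap\aut_C(Y)$ cannot be positive-dimensional: invariance under a fibrewise $\mathbb{G}_m$ or $\mathbb{G}_a$ forces $\F$, in a fibre coordinate $z$, to a Riccati form $b(x)\,\mathrm{d}z=-a(x)z\,\mathrm{d}x$, i.e. bidegree $(0,\ast)$ and $K_\F\cdot f=0$, contradicting ampleness. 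Hence $\aut^0(\F)$ injects, with finite kernel, into $\aut(B)$; being positive-dimensional this gives $\aut^0(B)\neq\{1\}$, so $g(B)\le 1$, and since $\aut^0(\F)$ is affine it cannot embed in an elliptic curve, so $B\cong\PP^1$ and $Y$ is a rational ruled surface $\mathbb{F}_n$ with $\aut^0(\F)$ acting through a positive-dimensional subgroup of $\pgl(2,\C)$ on the base. A further analysis of the $\aut^0(\F)$-invariant curves — the negative section $C_0$ and the invariant fibres, none of which can be $\F$-invariant since they are rational, combined with the local normal forms of Section~3 and the tangency/${\rm Z}$-index formulae \eqref{tang}, \eqref{gsv} — forces $n=0$ and transversality of $\F$ to the second ruling as well. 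On $\PP^1\times\PP^1$ a foliation transverse to both rulings and respecting the product structure is, in bihomogeneous coordinates $(x,y)$, the kernel of $\omega=v_2(y)\,\mathrm{d}x-v_1(x)\,\mathrm{d}y$ for vector fields $v_1$, $v_2$ on the two factors, hence preserved by the flow of $v=v_1\oplus v_2$. Finally, if $v$ is not tangent to either ruling then $\F$ has no $\F$-invariant (hence no $\rho$-contracted) fibre in this identification, so the birational map is an isomorphism near $\PP^1\times\PP^1$ and $v$ defines $\F$ globally there.

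The hard part is Step~3, precisely the passage from "$\aut^0(\F)$-invariant and generically transverse to a ruling, with $K_\F$ ample" to "$\mathbb{F}_0$ and transverse to both rulings": discarding the non-rational bases and the surfaces $\mathbb{F}_n$ with $n\ge 1$ and producing the second fibration needs a careful case analysis built on the classification of automorphism groups of ruled surfaces (Maruyama's theorem, Corollary~\ref{finp1}) together with the germ-level results of Section~3. One must also track how $K_\F$ behaves birationally — a blow-up replaces it by $\pi^\ast K_\F+(1-\nu)E$ in the non-dicritical case and by $\pi^\ast K_\F-\nu E$ in the dicritical one, so it never increases — in order to reconcile the ample model on $X$ with the split model on $\PP^1\times\PP^1$ through the correct sequence of blow-ups and blow-downs.
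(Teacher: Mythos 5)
A preliminary remark: the paper does not prove this proposition at all --- it is imported verbatim from \cite{CF} --- so there is no internal argument to compare yours against, and your attempt has to stand on its own. Your Steps 1 and 2 do stand: since $\aut(\F)$ preserves the ample bundle $K_{\F}$ it is a linear algebraic group, so if infinite it contains $\mathbb{G}_a$ or $\mathbb{G}_m$; the generator $w$ of such a subgroup cannot be generically tangent to $\F$ (it would give a nonzero section of $T_{\F}=-K_{\F}$, or, as you argue, a rational leaf $L$ with $K_{\F}\cdot L\le -2$); hence the orbit foliation is a foliation by rational curves and $X$ is birationally ruled over $\PP^1$. This much is correct and is surely the opening of any proof of the statement.

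Step 3, which you yourself flag as the hard part, is where the content lies, and as written it is not a proof; worse, the two concrete claims you make there are false. First, the assertion that the negative section and the invariant fibres ``cannot be $\F$--invariant since they are rational'' is wrong: ampleness of $K_{\F}$ only excludes rational curves that are entire leaves; an invariant rational curve $C$ through singular points merely satisfies ${\rm Z}(\F,C)=2+K_{\F}\cdot C>0$, which is no obstruction, and invariant fibres do occur in these models. Second, the splitting $v=v_1\oplus v_2$ is deduced from the claim that a foliation transverse to both rulings ``respecting the product structure'' is the kernel of $v_2(y)\,\mathrm{d}x-v_1(x)\,\mathrm{d}y$; that would make $\F$ itself defined by a split global vector field, which is precisely the conclusion of the ``moreover'' clause and is false in general. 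For instance $\F=\{y^m\mathrm{d}x-x^n\mathrm{d}y=0\}$ on $\PP^1\times\PP^1$ with $n,m\ge 3$ has $K_{\F}$ ample, is generically transverse to both rulings, is preserved by the flow of the split field $(1-m)x\partial_x+(1-n)y\partial_y$, yet is not defined by any global vector field (and, consistently, that field is tangent to a foliation by rational curves). The correct and much easier route to the splitting is that the connected group acts through $\aut^0(\PP^1\times\PP^1)=\pgl(2,\C)\times\pgl(2,\C)$, whose one-parameter subgroups are generated by $v_1\oplus v_2$ with $v_i\in\HH^0(\PP^1,T_{\PP^1})$ --- but to invoke this you must first show that the reduction to $\PP^1\times\PP^1$ (contraction to a minimal ruled model, then elementary transformations from $\mathbb{F}_n$ to $\mathbb{F}_0$ at fixed points of the action) can be carried out equivariantly, which you do not address. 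Finally, the ``moreover'' clause is dispatched in one sentence that never engages with its actual hypothesis: ``$v$ not tangent to a foliation by rational curves'' is a condition on the orbit foliation of $v$, not on the invariant fibres of the chosen ruling, and ``the birational map is an isomorphism near $\PP^1\times\PP^1$'' is not a meaningful conclusion. So the architecture is plausible, but the passage from ``rational ruled surface with a transverse one-parameter symmetry'' to the split model on $\PP^1\times\PP^1$, together with the whole second assertion, is missing.
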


Consequently, non-rational surfaces only support foliations with
ample canonical bundles whose automorphism groups are finite.

\begin{corollary}\label{nrat}
If $(X,\F)$ is a foliated surface such that $X$ is not rational and
$K_{\F}$ is ample, then  ${\rm Aut}(\F)$ is finite.
\end{corollary}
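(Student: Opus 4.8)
The plan is to derive this immediately from the preceding proposition by a case analysis that eliminates the one exceptional family. Assume for contradiction that $\mathrm{Aut}(\F)$ is infinite. Since $K_{\F}$ is ample, the proposition quoted from \cite{CF} applies: up to a birational map $\F$ is preserved by the flow of a vector field $v = v_1 \oplus v_2$ on $\PP^1 \times \PP^1$. In particular $\F$ is birationally equivalent to a foliation on the rational surface $\PP^1 \times \PP^1$, which already forces $X$ to be rational, contradicting the hypothesis. The only subtlety is that $\mathrm{Aut}(\F)$ as defined in this paper is a group of \emph{biholomorphisms of $X$}, not birational self-maps, so I must make sure the passage to a birational model does not silently change the surface's birational class. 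But birational equivalence of surfaces is exactly what is at stake here: if $(X,\F)$ is birationally equivalent to a foliation on $\PP^1\times\PP^1$, then $X$ is birational to $\PP^1\times\PP^1$, hence rational.

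First I would invoke the proposition to get the birational model, recording that the ampleness of $K_{\F}$ is what licenses it. Second, I would note that the conclusion "$\F$ is preserved by a nontrivial flow" is incompatible with $K_{\F}$ ample only after we have passed to the model; on $X$ itself the group $\mathrm{Aut}(\F)$ being infinite is the assumption, and the content of \cite{CF} is precisely that this can only happen in the $\PP^1\times\PP^1$ situation. Third, I would observe that $X$ birational to $\PP^1\times\PP^1$ means $X$ is rational, which contradicts the hypothesis that $X$ is not rational. Hence $\mathrm{Aut}(\F)$ must be finite.

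The main (and essentially only) obstacle is bookkeeping about which model we are working on and being careful that "up to a birational map" in the statement of the proposition is used correctly: the exceptional foliations live on $\PP^1\times\PP^1$ after a birational modification, so any surface carrying a foliation in that birational class is rational. Once that is pinned down, the corollary is a one-line contrapositive. No new estimates or local analysis are needed; this is purely a logical consequence of the cited classification.
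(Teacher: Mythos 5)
Your argument is correct and is exactly the paper's (the corollary is stated there as an immediate consequence of the cited proposition, with no separate proof): infinite $\aut(\F)$ with $K_{\F}$ ample forces $\F$ to be birationally a foliation on $\PP^1\times\PP^1$, so $X$ is rational, contradicting the hypothesis. Your care about which birational model carries the group is reasonable but resolves just as you say.
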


Moreover, if we suppose that $\F$ is reduced, then  $(X,\F)$ is minimal.
 Indeed, if  $(X,\F)$ is not minimal  there exists an $\F$--exceptional
curve $E$. Then 
\[
2\geq {\rm Z}(\F,E) = K_{\F}\cdot E -K_X\cdot E -E^2 = K_{\F}\cdot E +2,
\]
which would imply that $K_{\F}$ is not ample. Therefore, for reduced foliations with ample canonical bundle on non-ruled surfaces, it follows that $\bim(\F) = \aut(\F)$ and we have the following bound:

\begin{theorem}
Let $(X,\F)$ be a reduced foliated surface with ample
canonical bundle $K_{\F}$ such that $X$ is not ruled. Suppose that $\F$
has a singularity at $p\in X$ with eigenvalue $\lambda \neq -1$ which does not lie on a $\F$--invariant algebraic curve, then
\[
|{\rm Aut}(\F)| \leq c_2(T_X\otimes K_{\F})[((K_X + 5K_\F)\cdot(K_X + 4K_\F))^2 -1].
\]
\end{theorem}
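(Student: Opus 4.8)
The plan is to bound $|\aut(\F)|$ by the product of the size of the $\aut(\F)$--orbit of $p$ and the order of its stabilizer $G_p\ceq\{\varphi\in\aut(\F):\varphi(p)=p\}$, estimating the first factor by $c_2(T_X\otimes K_\F)$ and the second by $\big((K_X+5K_\F)\cdot(K_X+4K_\F)\big)^2-1$. Since $X$ is not ruled it is non-rational, so $\aut(\F)$ is finite by Corollary \ref{nrat} (and $\aut(\F)=\bim(\F)$, as noted before the statement). As $\aut(\F)$ permutes $\sing(\F)$ and every Milnor number is $\geq1$, the orbit--stabilizer identity of Proposition \ref{orb} gives $|\aut(\F)|/|G_p|\leq\sum_i\frac{|\aut(\F)|}{|H_i|}\mu(\F,x_i)=c_2(T_X\otimes K_\F)$. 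Hence it suffices to show $|G_p|\leq\big((K_X+5K_\F)\cdot(K_X+4K_\F)\big)^2-1$.

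Because $p$ is a reduced singularity with eigenvalue $\lambda\neq-1$, the multiplier by which $G_p$ acts on a local vector field $v$ generating $\F$ at $p$ is constant (indeed $=1$: $\lambda\neq\pm1$ forces $\mathrm{D}\varphi_p$ to commute with $\mathrm{D}v_p$), so $G_p<\aut(v,0)$ and Proposition \ref{stab} applies: $G_p$ is abelian and, in suitable coordinates, $v=(x+P)\partial_x+(\lambda y+Q)\partial_y$ with $G_p$ diagonal. In particular the two $\mathrm{D}v_p$--eigendirections at $p$ (to which the separatrices are tangent) are coordinate axes, hence $G_p$--eigenlines of $T_pX$. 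I then invoke the very ample linear system $|D|$, $D\ceq K_X+4K_\F$ (Lemma \ref{fuj} with $L=K_\F$). Since $\aut(\F)$ — and so $G_p$ — acts linearly on $H^0(X,D)$ (it preserves $K_X$ and $K_\F$), I decompose $H^0(X,D)$ into $G_p$--characters and use that the $G_p$--equivariant $1$--jet map $H^0(X,D\otimes\mathfrak m_p)\to T^{*}_pX\otimes D_p$ is surjective (very ampleness): for each of the two $\mathrm{D}v_p$--eigenlines $\ell\subset T_pX$ there is a $G_p$--semi-invariant $s_\ell\in H^0(X,D)$ with $\{s_\ell=0\}$ smooth at $p$ and tangent to $\ell$. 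Set $C_\ell\ceq\{s_\ell=0\}$; it is $G_p$--invariant, and its germ at $p$ is not $\F$--invariant since $p$ lies on no $\F$--invariant algebraic curve.

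For these curves one has $\tg(\F,C_\ell,p)\geq2$ — the linear part of $v(s_\ell)$ vanishes along $C_\ell$ because $v(p)=0$ and $T_pC_\ell$ is a $\mathrm{D}v_p$--eigenline — while on the other hand $\tg(\F,C_\ell,p)=\tg(\F,C'_\ell,p)\leq\tg(\F,C'_\ell)=(C'_\ell)^2+K_\F\cdot C'_\ell\leq D\cdot(D+K_\F)=(K_X+4K_\F)\cdot(K_X+5K_\F)=:T$, where $C'_\ell\preceq C_\ell\in|D|$ is the union of the non-$\F$--invariant components of $C_\ell$; the last inequality uses that on a non-ruled surface $K_X\cdot\Gamma\geq-1$ for every irreducible curve $\Gamma$, together with $K_\F\cdot\Gamma\geq1$. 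Writing $k_1,k_2\in[2,T]$ for the two values $\tg(\F,C_\ell,p)$, Proposition \ref{fibtg} — whose second alternative is excluded because $\lambda\neq-1$ — shows $G_p$ is cyclic and, in coordinates adapted to $C_{\ell_i}$, a generator has the form $(l_i^{k_i}x,l_iy)$ with $l_i$ a primitive $|G_p|$--th root of unity and $C_{\ell_i}$ tangent to the $y$--axis. Reading off the eigenvalue of one fixed generator of $G_p$ along each of the two eigendirections from these two normal forms and eliminating the unknowns gives $|G_p|\mid k_1k_2-1$; since $k_1k_2-1\geq3$ this yields $|G_p|\leq k_1k_2-1\leq T^2-1$, which together with the first paragraph proves the theorem.

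I expect the main difficulty to be the construction and control of the invariant curves $C_\ell$: getting them simultaneously smooth at $p$ with the prescribed tangent line, $G_p$--invariant, not $\F$--invariant at $p$, and with $\tg(\F,C_\ell,p)$ bounded by $T$. This is exactly where all the hypotheses enter — ampleness of $K_\F$ and the very ampleness of $|K_X+4K_\F|$ for the existence, the absence of $\F$--invariant algebraic curves through $p$ for the non-invariance, and the non-ruledness of $X$ (through $K_X\cdot\Gamma\geq-1$) for the numerical bound on the tangency. A secondary technical point is the bookkeeping needed to compare the two normal forms for $G_p$ coming from the two curves, and to verify at the outset that $G_p\subset\aut(v,0)$ so that the local results of Section 3 apply.
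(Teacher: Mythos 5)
Your proof is correct and follows essentially the same route as the paper's: orbit--stabilizer via Proposition \ref{orb} to reduce to the stabilizer $G_p$, Proposition \ref{stab} to diagonalize $G_p$ and $v$, the very ample system $|K_X+4K_\F|$ from Lemma \ref{fuj} to produce two $G_p$--semi-invariant curves smooth and transverse at $p$, Proposition \ref{fibtg} to extract the congruence $k_1k_2\equiv 1\pmod{|G_p|}$ from the two normal forms, and the global tangency formula to bound $k_1k_2$ by $\left((K_X+5K_\F)\cdot(K_X+4K_\F)\right)^2$. The only real deviation is a refinement: by using equivariance of the $1$-jet map you prescribe the tangent directions of the two curves to be the eigendirections of $Dv_p$, which collapses the paper's second case (where a curve is transverse to the axes, $G_p$ is generated by a homothety, and one gets the sharper linear bound $(K_X+5K_\F)\cdot(K_X+4K_\F)-1$), and your passage to the non-invariant part $C'_\ell$ before applying $\tg(\F,C)=C^2+K_\F\cdot C$ tidies a point the paper glosses over.
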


\begin{proof}
It follows from Corollary \ref{nrat} that $\aut(\F)$ is finite. Let $G$ be the stabilizer of $p$ in $\aut(\F)$ which has index bounded by the number of singularities:
\begin{equation}\label{cnr0}
(\aut(\F):G) \leq c_2(T_X\otimes K_{\F}).
\end{equation}
By Proposition \ref{stab} we have that  $G$ is abelian and there exist coordinates $(x,y)$ on a small neighborhood $U\ni p$ such that $p=(0,0)$, $G$ is diagonal and $\F$ is given by a vector field $v$ with diagonal linear part.

The line bundle $K_X \otimes K_{\F}^{\otimes 4}$ is $G$--invariant,
since $K_X$ and $K_{\F}$ are $G$--invariant. By Lemma
\ref{fuj}, we also have that  $K_X \otimes K_{\F}^{\otimes 4}$  is very ample, hence
\[
N+1:=\dim \HH^0(X,K_X \otimes
K_{\F}^{\otimes 4}) \geq 3.
\]
Since $G$ is abelian, the induced action on $\HH^0(X,K_X \otimes 
K_{\F}^{\otimes 4})$ is diagonalizable. There exist a basis of $\HH^0(X,K_X \otimes K_{\F}^{\otimes 4})$ composed by $G$--semi-invariant sections $s_0, \dots, s_N$. That is,
\begin{equation}\label{char}
s_i\circ \varphi =\chi_i (\varphi) s_i, \, i=0,\dots, N,
\end{equation}
for every $\varphi\in G$ and $\chi_i$ is the corresponding character. Consider the map 
\[
\Gamma\colon X \longrightarrow \PP^N
\]
defined by $\Gamma(x) = (s_0(x): \dots: s_N(x))$. Since $K_X \otimes K_{\F}^{\otimes 4}$ is very ample, $\Phi$ is an embedding. This implies, in particular, that $s_i(p)\neq 0$ for some $i$. Suppose, without loss of generality, that $s_0(p)=1$. We can assume that $U\subset \{s_0 \neq 0\}$ and it follows that the restriction of $\Gamma$ to $U$ is expressed by
\[
\left.\Gamma\right|_U(x,y) = (t_1(x,y), \dots, t_N(x,y)),
\]
where $s_i = t_i s_0$. Since $\Gamma$ is an embedding, then $D\Gamma_0$ has rank two. Thus,  we can assume that 
\[
A(x,y)\ceq\begin{pmatrix}
\dfrac{\partial t_1}{\partial x} & \dfrac{\partial t_1}{\partial y} \\[6pt]
\dfrac{\partial t_2}{\partial x} & \dfrac{\partial t_2}{\partial y}
\end{pmatrix}
\]
is invertible at the origin. In particular, if $\Gamma(p)=(1:a_1: \dots: a_N)$, then the curves 
\[
C_1 =\{s_1 -a_1s_0 = 0 \}\  \, {\rm and  } \, \ C_2=\{s_2 -a_2s_0 = 0\}
\]
are smooth and transverse at $p$. We have that the curves $C_1$ and $C_2$ are $G$--invariant. Indeed, let $\varphi$ be an element of $G$. The equation (\ref{char}) implies that
\[
0 = s_i(p) -a_is_0(p) = s_i(\varphi(p)) -a_is_0(\varphi(p)) = \chi_i(\varphi)s_i(p) -a_i\chi_0(\varphi)s_0(p),
\]
for $i=1,\dots, N$. Since $s_0(p)=1$, it follows that either $a_i=0$ or $\chi_i(\varphi)=\chi_0(\varphi)$. In both cases, 
\[
(s_i -a_is_0)\circ \varphi = \chi_i(\varphi)(s_i -a_is_0).
\]

Suppose that $\varphi$ is given by
\[
\varphi(x,y) = (l^ax,l^by), 
\]
where $l^n =1$ primitive and $\gcd(a,b,n)=1$. Then the equation (\ref{char}) also implies that
\begin{equation}\label{char2}
\chi_0(\varphi) A(0) \begin{pmatrix}
l^a & 0 \\
0 & l^b
\end{pmatrix} = 
\begin{pmatrix}
\chi_1(\varphi) & 0 \\
0 & \chi_2(\varphi)
\end{pmatrix}
A(0).
\end{equation}
We have two distinct cases:
\begin{enumerate}
\item The curves $C_1$ and $C_2$ are tangent to the axes at the origin; 
\item At least one of the curves $C_i$ is transverse to the axes at the origin.
\end{enumerate}

First case: \\
Up to exchanging $x$ with $y$, we may suppose that $C_1$ is tangent to $\{y=0\}$ and $C_2$ is tangent to $\{x=0\}$. Then
\[
\dfrac{\partial t_1}{\partial y}(0) = \dfrac{\partial t_2}{\partial x}(0)=0.
\]
Since by hypothesis  the singularity at $p\in Sing(\F)$  has eigenvalue $\lambda \neq -1$ which does not lie on a $\F$--invariant algebraic curve, then   $C_1$ and $C_2$ are not $\F$--invariant.

The Proposition \ref{fibtg}, applied to each curve, implies that $G$ is cyclic and has generators
$$
\psi_1(x,y)   = (\zeta^{k_1}x,\zeta y) 
$$
and 
$$
\psi_2(x,y)   = (\zeta x,\zeta^{k_2} y), 
$$
where $\zeta^{|G|}=1$, with $\zeta$ primitive and $k_i=\tg(\F,C_i,p)$, $i=1,2$ (since $\lambda \neq -1$). 

Since
 $\psi_1$ and  $\psi_2$ are generators of the cyclic group $G$,  we have that  $\psi_1^{k_2} = \psi_2$.  Hence 
\[
k_1 k_2 \equiv 1 \pmod{|G|}.
\]
Since these curves are tangent to the axes and the linear part of $v$ is diagonal, we have $k_1,k_2 \geq 2$. Hence
\begin{align}\label{cnr1}
|G| \leq k_1 k_2 -1 & \leq [(K_\F +C_1)\cdot C_1 ][(K_\F +C_2)\cdot C_2] -1 \nonumber \\
& = [(K_X + 5K_\F)\cdot(K_X + 4K_\F)]^2 -1,
\end{align}
since by construction  $C_i= K_X + 4K_\F$, for $i=1,2$.

Second case:
If either $C_1$ or $C_2$ is transverse to the axes, then the equation (\ref{char2}) implies that 
\[
\chi_0(\varphi)l^a=\chi_0(\varphi)l^b=\chi_1(\varphi) =\chi_2(\varphi).
\]
Therefore, $G$ is cyclic and is generated by
\[
\psi(x,y)=(\zeta x, \zeta y),
\]
where $\zeta^{|G|}=1$ with $\zeta$ primitive, and there exist $c_1,c_2 \in \C$ such that
\[
C' =   \{  c_1 (s_1 -a_1s_0)+  c_2(s_2 -a_2s_0 )= 0\}    
\]
is $G$--invariant, tangent to $\{x=0\}$ and smooth at $p$.  Hence, by the Proposition \ref{fibtg}, we have 
\[
1 \neq \tg(\F, C',p)\equiv 1 \pmod{|G|},
\]
which implies that
\begin{equation}\label{cnr2}
|G| \leq \tg(\F, C',p) -1 \leq \tg(\F, C') -1 = (K_X + 5K_\F)\cdot(K_X + 4K_\F)-1.
\end{equation}

To conclude, observe that $\kod(X)\geq 0$ hence 
\[
(K_X + 5K_\F)\cdot(K_X + 4K_\F) = g-1 + 3K_\F\cdot (K_X + 4K_\F) \geq 12K_\F^2 \geq 12,
\]
where $g$ is the arithmetic genus of a curve in the linear system $|K_X+4K_\F|$. Then these bounds are not trivial. 
Comparing the inequalities (\ref{cnr1}) and (\ref{cnr2})   with (\ref{cnr0}), we have 
\[
|\aut(\F)| \leq c_2(T_X\otimes K_{\F})[((K_X + 5K_\F)\cdot(K_X + 4K_\F))^2 -1].
\]
\end{proof}

\subsection*{Acknowlegments}We are grateful to Omegar Calvo Andrade, Alex Massarenti and Bruno Sc\'ardua for interesting conversations.
The first named author   was partially supported by CAPES, CNPq and Fapesp-2015/20841-5 Research  Fellowships. 
The  second  named  author was partially supported by CAPES and   is grateful  to  Instituto de Matemática e Estatística--Universidade Federal Fluminense for hospitality.
Finally, we would like to thank the referee by the suggestions, comments and improvements
to the exposition.


\begin{thebibliography}{10}

\bibitem{Bru1}
M. Brunella.
\newblock {\em Feuilletages holomorphes sur les surfaces complexes compactes.}
\newblock  Ann. Sci. \'Ecole Norm. Sup. (4), 30(5):569--594, 1997.

\bibitem{Bru}
M. Brunella.
\newblock {\em Birational geometry of foliations}, v.1,  {  IMPA
 Monographs}.
\newblock Springer, Cham, 2015.

\bibitem{Bru3}
M. Brunella, 
\newblock {\em Minimal models of foliated algebraic surfaces}, 
\newblock Bull. SMF 127, 1999, 289-305.

\bibitem{CF}
M. Corr{\^e}a, Jr. and T. Fassarella.
\newblock {\em On the order of the automorphism group of foliations.}
\newblock {  Math. Nachr.}, 287(16):1795--1803, 2014.

\bibitem{CM}
M. Corr{\^e}a , D. Machado.
\newblock  {\em Residue formulas for logarithmic foliations and applications}. 2016. (arXiv:math/161101203v1)


\bibitem{Cor}
A. Corti.
\newblock {\em Polynomial bounds for the number of automorphisms of a surface of
 general type.}
\newblock { Ann. Sci. \'Ecole Norm. Sup. (4)}, 24(1):113--137, 1991.

\bibitem{DI}
Igor~V. Dolgachev and Vasily~A. Iskovskikh.
\newblock { \em Finite subgroups of the plane {C}remona group.}
\newblock In {\em Algebra, arithmetic, and geometry: in honor of {Y}u. {I}.
  {M}anin. {V}ol. {I}}, volume 269 of {\em Progr. Math.}, pages 443--548.
  Birkh\"auser Boston, Inc., Boston, MA, 2009.

\bibitem{GOM}
X. G{\'o}mez-Mont.
\newblock {\em Holomorphic foliations in ruled surfaces.}
\newblock {  Trans. Amer. Math. Soc.}, 312(1):179--201, 1989.

\bibitem{G}
A. Grothendieck.
\newblock {\em G\'eom\'etrie formelle et g\'eom\'etrie alg\'ebrique.}
\newblock In {  S\'eminaire {B}ourbaki, {V}ol.\ 5}, pages Exp.\ No.\ 182,
 193--220, errata p.\ 390. Soc. Math. France, Paris, 1995.

\bibitem{HART}
R. Hartshorne.
\newblock {\em Algebraic geometry}.
\newblock Springer-Verlag, New York-Heidelberg, 1977.
\newblock Graduate Texts in Mathematics, No. 52.

\bibitem{HS}
A. Howard and A.~J. Sommese.
\newblock {\em On the orders of the automorphism groups of certain projective
 manifolds.}
\newblock In {  Manifolds and {L}ie groups ({N}otre {D}ame, {I}nd., 1980)},
 volume~14 of {  Progr. Math.}, pages 145--158. Birkh\"auser, Boston, Mass.,
 1981.

\bibitem{J}
J.~P. Jouanolou.
\newblock {\em \'{E}quations de {P}faff alg\'ebriques}, v. 708,  
 Lecture Notes in Mathematics.
Springer, Berlin, 1979.


\bibitem{Liao}
X. Liao.
\newblock { \em Chern classes of logarithmic vector fields.}
\newblock { J. Singul.}, 5:109--114, 2012.


\bibitem{MARC}
M. Maruyama.
\newblock {\em On classification of ruled surfaces}, volume~3 of {\em Lectures
 in Mathematics, Department of Mathematics, Kyoto University}.
\newblock Kinokuniya Book-Store Co., Ltd., Tokyo, 1970.

\bibitem{MARA}
M. Maruyama.
\newblock { \em On automorphism groups of ruled surfaces.}
\newblock { J. Math. Kyoto Univ.}, 11:89--112, 1971.

\bibitem{BLI}
G.~A. Miller, H.~F. Blichfeldt, and L.~E. Dickson.
\newblock {\em Theory and applications of finite groups}.
\newblock Dover Publications, Inc., New York, 1961.

\bibitem{PS}
J.~V. Pereira and P.~F. S{\'a}nchez.
\newblock {\em Transformation groups of holomorphic foliations.}
\newblock { Comm. Anal. Geom.}, 10(5):1115--1123, 2002.

\bibitem{Rei}
I. Reider.
\newblock { \em Vector bundles of rank {$2$} and linear systems on algebraic
 surfaces.}
\newblock { Ann. of Math. (2)}, 127(2):309--316, 1988.


\bibitem{BS}
 B. Sc\'ardua.
\newblock {\em On transcendental automorphisms of algebraic foliations. }
 Fundamenta Mathematicae, v. 179, n.2, p. 179-190, 2003.


\bibitem{Sei}
A.~Seidenberg.
\newblock {\em Reduction of singularities of the differential equation
 {$A\,dy=B\,dx$}}.
\newblock { Amer. J. Math.}, 90:248--269, 1968.


\bibitem{Suwa}
T. Suwa, 
\newblock  {\em Indices of vector fields and residues of holomorphic singular foliations},
Hermann (1998)





\bibitem{Wi}
A.~{Wiman}.
\newblock { \em Ueber die hyperelliptischen Curven und diejenigen vom Geschlechte
 $p=3$, welche eindeutige Transformationen in sich zulassen.}
\newblock {Stockh. Akad. Bihang $\text{XXI}_1$. No. 1. 23 S (1895).}, 1895.

\bibitem{X1}
G. Xiao.
\newblock {\em Bound of automorphisms of surfaces of general type. {I}.}
\newblock {  Ann. of Math. (2)}, 139(1):51--77, 1994.

\end{thebibliography}
\end{document}